\documentclass[onefignum,onetabnum]{siamart171218}

\usepackage{amssymb}
\usepackage{chemarrow}
\usepackage[all]{xy}
\usepackage{enumerate}



\usepackage{lipsum}
\usepackage{amsfonts}
\usepackage{graphicx}
\usepackage{epstopdf}
\usepackage{algorithmic}
\ifpdf
  \DeclareGraphicsExtensions{.eps,.pdf,.png,.jpg}
\else
  \DeclareGraphicsExtensions{.eps}
\fi


\newsiamremark{remark}{Remark}
\newsiamremark{hypothesis}{Hypothesis}
\crefname{hypothesis}{Hypothesis}{Hypotheses}
\newsiamthm{claim}{Claim}

\headers{Decoupling of Mixed Methods Based on Helmholtz Decompositions}{L. Chen and X. Huang}

\title{Decoupling of Mixed Methods Based on Generalized Helmholtz Decompositions\thanks{Submitted to the editors \today.
\funding{The first author was supported by  NSF Grant DMS-1418934 and in part by the Sea PolyProject of Beijing Overseas Talents. The second author was supported by the NSFC Projects 11771338 and 11671304, Zhejiang Provincial
Natural Science Foundation of China Projects LY17A010010 and LY15A010015, and Wenzhou Science and Technology Plan Project G20160019.}}}

\author{Long Chen\thanks{Beijing Institute for Scientific and Engineering Computing. Beijing University of Technology. Beijing 100124, China.
Department of Mathematics, University of California at Irvine, Irvine, CA 92697, USA
  (\email{chenlong@math.uci.edu}).}
\and Xuehai Huang\thanks{Corresponding author. School of Mathematics, Shanghai University of Finance and Economics, Shanghai 200433, China 
  (\email{xuehaihuang@gmail.com}).}
}

\usepackage{amsopn}


\ifpdf
\hypersetup{
  pdftitle={Decoupling of Mixed Methods Based on Generalized Helmholtz Decompositions},
  pdfauthor={L. Chen and X. Huang}
}
\fi




\newcommand{\dd}{\,{\rm d}}
\newcommand{\bs}{\boldsymbol}

\DeclareMathOperator*{\img}{img}
\newcommand{\curl}{\operatorname{curl}}
\renewcommand{\div}{\operatorname{div}}
\newcommand{\grad}{\operatorname{grad}}
\DeclareMathOperator*{\tr}{tr}
\DeclareMathOperator*{\rot}{rot}

\newcommand{\dev}{\operatorname{dev}}
\newcommand{\sym}{\operatorname{sym}}
\newcommand{\skw}{\operatorname{skw}}
\newcommand{\spn}{\operatorname{spn}}

\newcommand{\XH}[1]{\textcolor{black}{#1}}
\newcommand{\LC}[1]{\textcolor{black}{#1}}

\begin{document}

\maketitle

\begin{abstract}
A framework to systematically decouple high order elliptic equations into combination of Poisson-type and Stokes-type equations is developed. 
The key is to systematically construct the underling commutative diagrams involving the complexes and Helmholtz decompositions in a general way.
Discretizing the decoupled formulation leads to a natural superconvergence between the Galerkin projection and the decoupled approximation. Examples include but not limit to: the primal formulations and mixed formulations of biharmonic equation, fourth order curl equation, and triharmonic equation etc. As a by-product, Helmholtz decompositions for many dual spaces are obtained.
\end{abstract}

\begin{keywords}
  differential complex, commutative diagram, Helmholtz decomposition, mixed formulation, decoupling, discretization
\end{keywords}

\begin{AMS}
  58J10, 65N12, 65N22, 65N30
\end{AMS}

\section{Introduction}

We shall develop a framework to systematically decouple high order elliptic equations into combination of Poisson-type and Stokes-type equations. 
The key is to systematically construct the underling commutative diagrams involving the complexes and Helmholtz decompositions in a general way.

Differential complexes and corresponding Helmholtz decompositions play the fundamental role in the design and analysis of mixed finite element methods.  Among many others, the de Rham complex for the Hodge Laplacian and the elasticity complex for the linear elasticity are two successful examples~\cite{ArnoldFalkWinther2006,ArnoldFalkWinther2010}.
A direct and useful result of a differential complex is the Helmholtz decomposition. With this decomposition, the kernel spaces of differential operators involved in the complex are characterized clearly.
The generalized Helmholtz decomposition of Banach spaces presented in this paper can be regarded as a generalization of the well-known Helmholtz-Hodge decomposition in \cite{ArnoldFalkWinther2006}. The Helmholtz-Hodge decomposition is $L^2$-orthogonal, while the generalized Helmholtz decomposition is only a direct sum, but not necessary to be orthogonal.

Our approach is based on the following diagram
\begin{equation}\label{intro:shortdiagrammixedform}
\begin{array}{c}
\xymatrix@R-=1.6pc{
& X \ar@{->}[r]^-{J_X} & X' & &\\
0\ar[r]^-{} & P \ar[r]^-{\dd ^-} & \Sigma \ar[r]^-{\dd}\ar@{}[u]|{\bigcup}
                & V' \ar[r]^-{} & 0\\
&&\widetilde{\Sigma}  \ar[u]^{\Pi_{\Sigma}} & V\ar[u]_{J_V}\ar[l]_{\Pi_V}}
\end{array},
\end{equation}
where the isomorphisms $J_X$ and $J_V$ are the inverses of the Riesz representations, and the rest linear operators are all continuous but not necessarily isomorphic.
The middle complex is exact, i.e. $\ker(\dd)=\img(\dd^{-})$, $\dd$ is surjective and $\dd^-$ is injective.
A stable Helmholtz decomposition can be derived from \eqref{intro:shortdiagrammixedform}
\begin{equation}\label{intro:abstracthelmholtzdecomp2}
\Sigma = \dd ^-P \oplus \Pi_{\Sigma} \Pi_V V.
\end{equation}
By finding the specific diagrams of \eqref{intro:shortdiagrammixedform},
we recover several well-known Helmholtz decompositions and discover many new Helmholtz decompositions.
In particular, we obtain Helmholtz decompositions for many dual spaces.
We summarize the details of these Helmholtz decompositions in Table~\ref{table:helmholtzdecomps}.
\begin{table}[htbp]
  \centering
  \caption{Examples of Helmholtz decomposition generated from \eqref{intro:shortdiagrammixedform}-\eqref{intro:abstracthelmholtzdecomp2}.}\label{table:helmholtzdecomps}
  \begin{tabular}{|c|c|c|}
    \hline
    Hilbert space & Helmholtz decomposition & Refs. \\ \hline
 $\boldsymbol L^2(\Omega)$ in 2d and 3d & $\nabla H_0^1(\Omega) \oplus^{\perp} \curl \bs H^1(\Omega)$ & \cite{ArnoldFalkWinther2006,GiraultRaviart1986, CarstensenBartelsJansche2002} \\ 
    $\boldsymbol{H}^{-1}(\Omega)$ in 2d & $\nabla L_0^2(\Omega) \oplus ^{\bot} \Delta (\curl H_0^2(\Omega))$ & \cite{OlshanskiiPetersReusken2006} \\ 
    $\boldsymbol H^{-1}(\div , \Omega)$ in $2d$ & $\nabla H_0^1(\Omega) \oplus \curl L^2(\Omega)$ & \cite{BrezziFortin1986} \\ 
    $\boldsymbol{H}^{-1}(\div \boldsymbol{\div },\Omega; \mathbb{S})$ in $2d$ & $\nabla ^s\times \boldsymbol{H}^{1}(\Omega; \mathbb{R}^2) \oplus \boldsymbol\pi H_0^1(\Omega)$ & \cite{KrendlRafetsederZulehner2016} \\ \hline \hline
    $\boldsymbol{H}(\div \boldsymbol{\div },\Omega; \mathbb{S})$ in $2d$ & $\nabla ^s\times \boldsymbol{H}^{1}(\Omega; \mathbb{R}^2) \oplus \boldsymbol\pi \Delta^{-1}L^2(\Omega)$ & Section \ref{subsec:div2HD} \\ 
    $\boldsymbol H^{-2}(\rot \mathbf{rot}, \Omega;\mathbb S)$ in $2d$ & $\boldsymbol \varepsilon \boldsymbol L^2(\Omega; \mathbb R^2) \oplus \bs\curl\curl  H_0^2(\Omega)$ & Section \ref{subsec:rot2HD} \\ 
    $\boldsymbol{H}^{-2}(\div^3, \Omega)$ in $2d$ & $\sym\bs\curl\boldsymbol{H}^1(\Omega; \mathbb S)\oplus\bs\Xi\nabla H_0^{2}(\Omega)$ & Section \ref{subsec:div3HD} \\ 
    $\boldsymbol H^{-1}(\div , \Omega)$ in $3d$ & $\nabla H_0^1(\Omega) \oplus \curl \bs L^2(\Omega; \mathbb{R}^3)$ & Section \ref{subsec:divm1HD} \\ 
    $\boldsymbol H^{-1}({\curl}, \Omega)$ in $3d$ & $\nabla L_0^2(\Omega) \oplus \curl \bs H_0(\curl, \Omega)$ & Section \ref{subsec:curlm1HD} \\ 
    $\boldsymbol{H}(\curl\curl, (K_0^c)')$ in $3d$ & $\spn^{-1}\skw\boldsymbol{H}^0(\bs\div, \dev\sym)\oplus K_0^c$ & Section \ref{subsec:curl2KdualHD} \\ \hline
  \end{tabular}
\end{table}

An abstract two-term mixed formulation based on the commutative diagram~\eqref{intro:shortdiagrammixedform} is: given $g\in\Sigma'$ and $f\in V'$,  find $(\sigma , u)\in \Sigma\times V$ such that
\begin{align}
(\sigma, \tau)_{X'}+ \langle \dd \tau, u\rangle & =\langle g, \tau\rangle \quad \forall~\tau\in\Sigma, \label{intro:mixed1}\\
\langle \dd \sigma, v\rangle & =\langle f, v\rangle  \quad \forall~v\in V.\label{intro:mixed2}
\end{align}
Under the assumption that the norm equivalence
\begin{equation}\label{Intro:normequivalenceSigma}
\|\tau\|_{\Sigma}^2\eqsim \|\tau\|_{X'}^2 + \|\dd \tau\|_{V'}^2 \quad \forall~\tau\in\Sigma
\end{equation}
holds, the mixed formulation \eqref{intro:mixed1}-\eqref{intro:mixed2} is wellposed.
Indeed, $\dd$ is surjective from the commutative diagram~\eqref{intro:shortdiagrammixedform}.
And the norm equivalence \eqref{Intro:normequivalenceSigma} can guarantee the continuity of the bilinear forms and the coercivity of $(\cdot, \cdot)_{X'}$ on $\ker(\dd)$.

To discretize the inner product $(\sigma, \tau)_{X'}$, we introduce $\phi=J_X^{-1}\sigma\in X$ and obtain an equivalent but unfolded three-term formulation:
find $(\phi, u, \sigma)\in X\times V\times\Sigma$ such that
\begin{align}
(\phi, \psi)_X - \langle \sigma, \dd 'v + \psi \rangle & =-\langle f, v\rangle  \quad\, \forall~(\psi, v)\in X\times V, \label{intro:mixedunfolded1} \\
\langle \dd 'u + \phi , \tau \rangle & =  \langle g, \tau\rangle \quad\quad \forall~\tau\in\Sigma. \label{intro:mixedunfolded2}
\end{align}
Equation \eqref{intro:mixedunfolded1} is the combination of \eqref{intro:mixed2} and $\phi=J_X^{-1}\sigma$, and equation \eqref{intro:mixedunfolded2} follows from \eqref{intro:mixed1} and $\phi=J_X^{-1}\sigma$.

Applying the Helmholtz decomposition \eqref{intro:abstracthelmholtzdecomp2} to the unfolded formulation~\eqref{intro:mixedunfolded1}-\eqref{intro:mixedunfolded2}, we obtain a decoupled formulation: find $w, u\in V$, $\phi\in X$, and $p\in P/\ker \dd ^-$ such that
\begin{align}
(w, v)_V&=\langle f, v\rangle  \quad\quad\quad\quad\quad\;\;\; \forall~v\in V, \label{intro:mixedunfoldedequiv1} \\
(\phi, \psi)_{X} - \langle \dd ^-p, \psi\rangle &= \langle\Pi_{\Sigma} \Pi_V w, \psi\rangle \quad\quad\;\; \forall~\psi\in X, \label{intro:mixedunfoldedequiv2} \\
\langle \dd ^-q, \phi\rangle &=\langle g, \dd ^-q\rangle \quad\quad\quad\quad\;\; \forall~q\in P/\ker \dd ^-, \label{intro:mixedunfoldedequiv3} \\
(u, \chi)_V&=\langle g-\phi, \Pi_{\Sigma} \Pi_V \chi\rangle \quad\, \forall~\chi\in V. \label{intro:mixedunfoldedequiv4}
\end{align}
The middle system \eqref{intro:mixedunfoldedequiv2}-\eqref{intro:mixedunfoldedequiv3} of $(\phi, p)$ is now a Stokes-type system,
and \eqref{intro:mixedunfoldedequiv1} and \eqref{intro:mixedunfoldedequiv4} are usually Poisson-type equations depending on the inner product $(\cdot, \cdot)_V$ .

By finding the underlying complexes, we recover some recent results on the decoupling of
\begin{itemize}
\item the HHJ method for plate problem into two Poisson equations and one linear elasticity problem \cite{KrendlRafetsederZulehner2016};

\item  the primal formulation of biharmonic equation in two and three dimensions into two Poisson equations and one Stokes-type equation \cite{HuangHuangXu2012,Huang2010, Gallistl2017};

\item the primal formulation of the fourth order elliptic singular perturbation problem into two Poisson equations and one Brinkman problem \cite{Gallistl2017};

\item  the primal formulation of fourth order $\curl$ equation into two Maxwell equations and one Stokes equation \cite{Zhang2016a}.
\end{itemize}
Moreover, we can get new decouplings by using the framework developed in this paper, for example, we decouple
\begin{itemize}
\item the mixed formulation of fourth order $\curl$ equation into two Maxwell equations and one mixed formulation of Poisson-type equation in Section~\ref{sec:quadcurlmixed};

\item the primal formulation of the triharmonic equation in two dimensions into two biharmonic equations and one tensorial Stokes equation in Section~\ref{sec:triharmonicprimal};

\item the mixed formulation of the triharmonic equation in two dimensions into two biharmonic equations and one tensorial Poisson-type equation in Section~\ref{sec:triharmonicmixed};

\item the $m$-th harmonic equation into two $(m-1)$-th harmonic equations and one tensorial Stokes-type equation in Section~\ref{sec:triharmonicprimal}.
\end{itemize}

Compared to the original formulation, it is much easier to construct conforming finite element spaces for the decoupled formulation,
since the order of the system is reduced and the finite element methods for Stokes equation and Poisson equation are well-developed.
We shall also show a natural superconvergence between the Galerkin projection and the approximation based on the decoupled formulation.

We are motivated by the pioneer work of such decoupling for Reissner-Mindlin plate model~\cite{BrezziFortin1986}, which has been found since 1980s', and a recent decomposition for HHJ formulation of Kirchhoff plate model~\cite{KrendlRafetsederZulehner2016,RafetsederZulehner2017} and biharmonic equation in three dimensions \cite{PaulyZulehner2016}. \XH{Results can be also found for the primal formulation of biharmonic equations~\cite{HuangHuangXu2012,Huang2010, Gallistl2017}, the primal formulation of the fourth order $\curl$ equation~\cite{Zhang2016a,BrennerSunSung2017}, the eigenvalue problem of biharmonic equation~\cite{ZhangXiJi2018} and the linear second-order elliptic problem in nondivergence form~\cite{Gallistl2017a}.}
Viewed as the polyharmonic generalized Stokes problem, the $m$-th harmonic equation was decoupled into $(2m-2)$ Poisson-type problems and one generalized Stokes equation over the symmetric tensors by applying a split recursively in~\cite{Gallistl2017}. As comparison, we decouple the $m$-th harmonic equation into two $(m-1)$-th harmonic equations and one tensorial Stokes-type equation independently, \XH{and recursively to decouple into $2^{m-1}$ Poisson equations and $(2^{m-1}-1)$ Stokes-type equations}, which is different from the decoupling in \cite{Gallistl2017} for $m\geq3$.
\LC{We can also stop the decoupling at biharmonic equations, which can be discretized directly by many existing finite elements methods.}
We refer to~\cite{Schedensack2016,Zhang2018} for more works on reducing the $m$-th harmonic equation into the lower order partial differential equations.
Our framework unifies most of those results and will lead to many more \XH{Hodge decompositions} especially for high order elliptic equations. Along this way, we can decouple the higher order partial differential equation into lower order ones, which makes the discretization easier.

In addition, the explicit expression of the kernel space can be used to develop fast solvers, see, for example,~\cite{Hiptmair1997,ArnoldFalkWinther2000,HiptmairXu2007,Chen2016,ChenHuHuang2015a}.
The Helmholtz decomposition is also a key tool to construct the {\em a posteriori} error estimators of nonconforming and mixed finite element methods~\cite{Alonso1996,Carstensen2005,CarstensenBartels2002,CarstensenHu2007,HuangHuangXu2011,ChenHuHuangMan2017}.
The important role of the structure revealed in our work for designing fast solvers and the {\em a posterior} error analysis will be explored somewhere else.


The rest of this paper is organized as follows. In Section 2, we establish the generalized Helmholtz decomposition
based on the commutative diagram and give several examples.
The abstract mixed formulation and its decomposition based on the Helmholtz decomposition are presented in Section 3.
In Section 4, we discretize the decoupled formulation directly illustrated by two examples.
Throughout this paper, we use ``$\lesssim\cdots $" to mean that ``$\leq C\cdots$", where $C$ is a generic positive constant independent of meshsize $h$, which may take different values at different appearances.
And $a\eqsim b$ means $a\lesssim b$ and $b\lesssim a$.

\section{Generalized Helmholtz Decompositions}
In this section we apply the splitting lemma of Banach spaces and Hilbert spaces to differential complexes and obtain Helmholtz-type decompositions for several Sobolev spaces with negative index.
\subsection{Background in Functional Analysis}
We start from a short exact sequence
\begin{equation}
\label{eq:shorttilde}
\widetilde{W}\; \autorightarrow{$\tilde{\dd}_2$}{}\; \widetilde{V} \; \autorightarrow{$\tilde{\dd}_1$}{} \;\widetilde{U} \autorightarrow{}{} 0.
\end{equation}
Here capital letters represent Banach spaces and $\tilde{\dd}_i \, (i=1,2)$ are bounded linear operators. The sequence \eqref{eq:shorttilde} is exact meaning that
$$
\ker(\tilde{\dd}_1) = \img(\tilde{\dd}_2), \; \img(\tilde{\dd}_1) = \widetilde{U}.
$$
The space $\widetilde{W}$ can be further reduced to the quotient space $\widetilde{W}/\ker(\tilde{\dd}_2)$ so that
\begin{equation*}
0\; \autorightarrow{}{} \widetilde{W}/\ker(\tilde{\dd}_2) \; \autorightarrow{$\tilde{\dd}_2$}{}\; \widetilde{V} \; \autorightarrow{$\tilde{\dd}_1$}{} \;\widetilde{U} \autorightarrow{}{} \; 0
\end{equation*}
forms a short exact sequence in the context of group \cite{Hatcher2002}. 

Let $U,V$ be two additional Banach spaces and $\dd _1: U\to V$ be a bounded linear operator.
Let $I_V: \dd_1 U \to\widetilde{V}$ be a bounded linear operator, and $J_U: U\to\widetilde{U}$ be an isomorphism satisfying the assumption:
\begin{equation}\label{eq:shortdiagramequality}
\tilde{\dd}_1I_V \dd _1 u = J_Uu\quad  \text{for all } u\in U,
\end{equation}
which can be summarized as the following commutative diagram
\begin{equation}\label{eq:shortdiagram}
\begin{array}{c}
\xymatrix{
\widetilde{W} \ar[r]^-{\tilde{\dd}_2} & \widetilde{V} \ar[r]^-{\tilde{\dd}_1}
                & \widetilde{U} \ar[r]^-{} & 0 \\
&V  \ar[u]^{I_V} & U\ar[u]_{J_U}\ar[l]_{\dd _1}}
\end{array}.
\end{equation}

To derive the generalized Helmholtz decomposition, we first recall the splitting lemma in algebraic topology.
\begin{lemma}[Splitting lemma in \cite{Hatcher2002}]\label{lem:splittinglemma}
For a short exact sequence $$0\; \autorightarrow{}{} U \; \autorightarrow{$\dd_1$}{}\; V \; \autorightarrow{$\dd_2$}{} \;W \autorightarrow{}{} \; 0$$ of abelian groups the following statements are equivalent:
\begin{enumerate}[(a)]
\item There is a homomorphism $\dd_3: V\to U$ such that $\dd_3\dd_1$ is the identity on $U$.
\item There is a homomorphism $\dd_4: W\to V$ such that $\dd_2\dd_4$ is the identity on $W$.
\item The group $V$ is isomorphic to the direct sum of $U$ and $W$, with $\dd_1$ corresponding to the natural injection of $U$ and $\dd_2$ to the natural projection onto $W$.
\end{enumerate}
\end{lemma}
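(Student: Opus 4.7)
The plan is to prove the three conditions equivalent by showing (c) $\Rightarrow$ (a), (c) $\Rightarrow$ (b), and then the two less trivial directions (a) $\Rightarrow$ (c) and (b) $\Rightarrow$ (c). The implications out of (c) are essentially free: once $V$ is identified with $U \oplus W$ with $\dd_1$ acting as the inclusion of the first factor and $\dd_2$ as the projection onto the second, one simply takes $\dd_3$ to be the first coordinate projection $V \cong U \oplus W \to U$ and $\dd_4$ to be the inclusion $W \hookrightarrow U \oplus W \cong V$.

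For (a) $\Rightarrow$ (c), I would construct the candidate isomorphism $\phi \colon V \to U \oplus W$ by $\phi(v) = (\dd_3(v), \dd_2(v))$ and verify it is an isomorphism of abelian groups. Injectivity uses exactness at $V$: the condition $\phi(v)=0$ forces $v \in \ker(\dd_2) = \img(\dd_1)$, say $v = \dd_1(u)$, and then $\dd_3(v) = \dd_3\dd_1(u) = u$ must also vanish. Surjectivity uses surjectivity of $\dd_2$: given $(u,w) \in U \oplus W$, pick any $v_0$ with $\dd_2(v_0)=w$ and correct it by setting $v = v_0 + \dd_1(u - \dd_3(v_0))$, after which the identities $\dd_2\dd_1 = 0$ and $\dd_3\dd_1 = \operatorname{id}_U$ give $\phi(v)=(u,w)$. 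Under this $\phi$, the map $\dd_1$ becomes the inclusion $u \mapsto (u,0)$ automatically.

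For (b) $\Rightarrow$ (c), I would instead construct the isomorphism in the opposite direction, $\psi \colon U \oplus W \to V$, by $\psi(u,w) = \dd_1(u) + \dd_4(w)$. Injectivity follows by applying $\dd_2$ to $\psi(u,w)=0$ to obtain $w = 0$, then using injectivity of $\dd_1$ to conclude $u=0$. Surjectivity follows from exactness: for any $v \in V$, the element $v - \dd_4(\dd_2(v))$ lies in $\ker(\dd_2) = \img(\dd_1)$, so it equals $\dd_1(u)$ for some $u$, whence $\psi(u, \dd_2(v)) = v$. The desired identifications of $\dd_1$ and $\dd_2$ with the natural injection and projection are immediate from the formula for $\psi$ together with $\dd_2 \dd_1 = 0$ and $\dd_2 \dd_4 = \operatorname{id}_W$.

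I do not anticipate any serious obstacle: this is standard homological algebra, and every verification reduces to the short exact sequence identities $\dd_2\dd_1=0$, injectivity of $\dd_1$, and surjectivity of $\dd_2$, combined with the retraction identity $\dd_3\dd_1=\operatorname{id}_U$ in case (a) or the section identity $\dd_2\dd_4=\operatorname{id}_W$ in case (b). The only small point worth noting is that the isomorphism in (c) is naturally constructed in different directions from the two hypotheses — $V \to U \oplus W$ from a retraction $\dd_3$ and $U \oplus W \to V$ from a section $\dd_4$ — but this is a feature of the proof rather than a difficulty.
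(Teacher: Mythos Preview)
Your proof is correct and is the standard argument for the splitting lemma. Note, however, that the paper does not supply its own proof of this statement: it is quoted verbatim as a known result from Hatcher's \emph{Algebraic Topology} and used as a black box. Your argument is essentially the one found in that reference, so there is nothing to compare beyond observing that you have reproduced the classical proof faithfully.
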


Apparently Banach spaces are abelian groups under addition.
Then we have an generalized Helmholtz decomposition as follows.
\begin{theorem}\label{thm:abstracthelmholtzdecomp}
Suppose we have a short exact sequence \eqref{eq:shorttilde}. Assume the commutative diagram \eqref{eq:shortdiagram} holds with all the linear operators being bounded and $J_U: U\to\widetilde{U}$ being an isomorphism. Then we have a stable Helmholtz decomposition
\begin{equation}\label{eq:splitting}
\widetilde{V} = \tilde{\dd}_2 (\widetilde{W}/\ker(\tilde{\dd}_2)) \oplus I_V \dd _1 U,
\end{equation}
where $\oplus$ means the direct sum.
More precisely for any $\widetilde{v}\in \widetilde{V}$, there exist $\widetilde{w}\in \widetilde{W}/\ker \tilde{\dd}_2$ and $u\in U$ such that
\begin{align}\label{eq:abstracthelmholtzdecomp1}
\widetilde{v} = \tilde{\dd}_2 \widetilde{w} + I_V \dd _1 u,\\
\label{eq:abstracthelmholtzdecomp1stable}
\|\widetilde{w}\|_{\widetilde{W}}+\|u\|_U\lesssim \|\widetilde{v}\|_{\widetilde{V}}.
\end{align}
\end{theorem}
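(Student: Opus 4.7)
The plan is to apply the splitting lemma (Lemma~\ref{lem:splittinglemma}) to the reduced short exact sequence
$$
0\to \widetilde{W}/\ker(\tilde{\dd}_2)\xrightarrow{\tilde{\dd}_2}\widetilde{V}\xrightarrow{\tilde{\dd}_1}\widetilde{U}\to 0,
$$
by constructing an explicit right inverse of $\tilde{\dd}_1$ from the commutative diagram~\eqref{eq:shortdiagram}. Define
$$
s := I_V\dd_1 J_U^{-1}\colon \widetilde{U}\longrightarrow \widetilde{V}.
$$
The assumption~\eqref{eq:shortdiagramequality} gives $\tilde{\dd}_1 I_V\dd_1 = J_U$, hence $\tilde{\dd}_1 s=\mathrm{id}_{\widetilde{U}}$. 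This verifies condition~(b) of the splitting lemma and yields, as abelian groups, the decomposition~\eqref{eq:splitting}.

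To make the decomposition explicit, I will take any $\widetilde{v}\in \widetilde{V}$ and write
$$
\widetilde{v} = \bigl(\widetilde{v}-s(\tilde{\dd}_1 \widetilde{v})\bigr) + s(\tilde{\dd}_1 \widetilde{v}).
$$
The second piece equals $I_V\dd_1 u$ with $u:=J_U^{-1}\tilde{\dd}_1\widetilde{v}\in U$. The first piece lies in $\ker(\tilde{\dd}_1)$ since $\tilde{\dd}_1 s=\mathrm{id}$, and by exactness $\ker(\tilde{\dd}_1)=\img(\tilde{\dd}_2)$, so there is a unique $\widetilde{w}\in \widetilde{W}/\ker(\tilde{\dd}_2)$ with $\tilde{\dd}_2\widetilde{w}=\widetilde{v}-s(\tilde{\dd}_1\widetilde{v})$, giving~\eqref{eq:abstracthelmholtzdecomp1}. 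Uniqueness of the decomposition, needed for the direct-sum assertion, follows because $I_V\dd_1 U\cap \tilde{\dd}_2\widetilde{W}=\{0\}$: if $I_V\dd_1 u=\tilde{\dd}_2 \widetilde{w}$, applying $\tilde{\dd}_1$ and using $\tilde{\dd}_1\tilde{\dd}_2=0$ gives $J_U u=0$, and since $J_U$ is an isomorphism, $u=0$.

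For the stability estimate~\eqref{eq:abstracthelmholtzdecomp1stable}, I bound each piece using the continuity of the operators in the diagram. From $u=J_U^{-1}\tilde{\dd}_1\widetilde{v}$ we immediately obtain
$$
\|u\|_U\le \|J_U^{-1}\|\,\|\tilde{\dd}_1\|\,\|\widetilde{v}\|_{\widetilde{V}}\lesssim \|\widetilde{v}\|_{\widetilde{V}}.
$$
Then $\|s(\tilde{\dd}_1\widetilde{v})\|_{\widetilde{V}}\le \|I_V\|\,\|\dd_1\|\,\|u\|_U\lesssim \|\widetilde{v}\|_{\widetilde{V}}$, so the triangle inequality gives $\|\tilde{\dd}_2\widetilde{w}\|_{\widetilde{V}}\lesssim \|\widetilde{v}\|_{\widetilde{V}}$. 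Finally, the induced map $\tilde{\dd}_2\colon \widetilde{W}/\ker(\tilde{\dd}_2)\to \ker(\tilde{\dd}_1)$ is a continuous bijection between Banach spaces (the image is closed since $\ker(\tilde{\dd}_1)$ is closed by continuity of $\tilde{\dd}_1$, and injectivity holds by construction of the quotient), so by the open mapping theorem its inverse is bounded, giving $\|\widetilde{w}\|_{\widetilde{W}/\ker(\tilde{\dd}_2)}\lesssim \|\tilde{\dd}_2\widetilde{w}\|_{\widetilde{V}}\lesssim \|\widetilde{v}\|_{\widetilde{V}}$.

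The main technical point — and the only nontrivial step beyond chasing the diagram — is this last application of the open mapping theorem: one must verify that $\widetilde{W}$ (hence $\widetilde{W}/\ker(\tilde{\dd}_2)$) is Banach and that $\ker(\tilde{\dd}_1)$ is closed in $\widetilde{V}$ so that the bijective restriction of $\tilde{\dd}_2$ has a bounded inverse. Everything else is an algebraic consequence of the commutativity relation $\tilde{\dd}_1 I_V\dd_1=J_U$ combined with the invertibility of $J_U$.
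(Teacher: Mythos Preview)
Your proof is correct and follows essentially the same approach as the paper: both apply the splitting lemma to the reduced short exact sequence by exhibiting a right inverse built from $I_V\dd_1$ and the isomorphism $J_U$ (the paper composes $J_U^{-1}$ with $\tilde{\dd}_1$ to form the second map of the sequence, you instead compose it into the section $s$; these are equivalent). Your argument is actually more detailed than the paper's on the stability estimate, where the paper only says the decomposition ``is stable as all operators involved are continuous'' without spelling out the open mapping theorem step you make explicit.
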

\begin{proof}
By assumptions, we have a short exact sequence
\begin{equation*}
0 \; \autorightarrow{}{} \widetilde{W}/\ker(\tilde{\dd}_2) \; \autorightarrow{$\tilde{\dd}_2$}{}\; \widetilde{V} \; \autorightarrow{$J_U^{-1}\tilde{\dd}_1$}{} \;U \autorightarrow{}{} \; 0.
\end{equation*}
And $I_V \dd _1$ forms a right inverse of $J_U^{-1}\tilde{\dd}_1$. Thus \eqref{eq:splitting} holds by Lemma~\ref{lem:splittinglemma} which is stable as all operators involved are continuous.
\end{proof}


The generalized Helmholtz decomposition \eqref{eq:splitting} is a direct result of the splitting lemma after finding the underlying commutative diagram \eqref{eq:shortdiagram}. One contribution of this paper is to construct various commutative diagrams to induce stable Helmholtz decompositions on Sobolev spaces of negative order.

\begin{remark}\label{rmk:helmdecomHarmonic}
Consider Hilbert space $\widetilde{V}$ with inner product $(\cdot, \cdot)_{\widetilde{V}}$.
The quotient space $\ker(\tilde{\dd}_1)/\tilde{\dd}_2 (\widetilde{W})$ is isomorphic to the space of harmonic forms
\[
\mathfrak H:=\left\{\widetilde{v}\in\widetilde{V}: \tilde{\dd}_1\widetilde{v}=0,\; (\widetilde{v}, \tilde{\dd}_2 \widetilde{w})_{\widetilde{V}}=0\quad \forall~\widetilde{w}\in\widetilde{W}\right\}.
\]
When the quotient space $\ker(\tilde{\dd}_1)/\tilde{\dd}_2 (\widetilde{W})$ is non-trivial, i.e., the sequence \eqref{eq:shorttilde} is not exact,
the Helmholtz decomposition will be
\begin{equation*}
\widetilde{V} = \tilde{\dd}_2 (\widetilde{W}/\ker(\tilde{\dd}_2)) \oplus I_V \dd _1 U \oplus \mathfrak H.
\end{equation*} $\hfill\Box$
\end{remark}

In Theorem \ref{thm:abstracthelmholtzdecomp}, the decomposition is a direct sum but not necessarily orthogonal. Indeed in the proof we do not use the inner product structure. We now explore the orthogonality for Hilbert complexes.
In what follows, we always denote by $\langle \cdot, \cdot \rangle$ the duality pairing and reserve $( \cdot, \cdot )$ for the $L^2$ inner product.

Denoted by $X'$ the dual space of a linear space $X$ and $T': Y'\to X'$ the dual of a linear operator $T: X\to Y$ defined as
 $$
 \langle T' g, x \rangle : = \langle g, Tx \rangle.
 $$
When $X$ is a Hilbert space with an inner product $(\cdot, \cdot)_X$ and $X'$ is the continuous dual of $X$, by Riesz representation theorem, we have an isomorphism $J_X: X\to X'$: for any $w\in X$, define $J_Xw\in X'$ as
\begin{equation}\label{eq:J_X}
\langle J_Xw, v\rangle=(w, v)_X \quad \forall~v\in X.
\end{equation}
The induced inner product and norm for any $w', v'\in X'$ are given by
\begin{align}
(w', v')_{X'}&:=(J_X^{-1}w', J_X^{-1}v')_{X} = \langle J_X^{-1}w', v' \rangle= \langle w', J_X^{-1}v' \rangle, \label{eq:Xdualinnerproduct}\\
\|w'\|_{X'}&:=\|J_X^{-1}w'\|_{X}. \notag
\end{align}

Let $U, V, W$ be Hilbert spaces. Suppose we have a short exact sequence 
of their dual spaces
\begin{equation}\label{eq:shortdual}
0\; \autorightarrow{}{} W'/\ker(\dd_2')\; \autorightarrow{$\dd _2'$}{}\; V' \; \autorightarrow{$\dd _1'$}{} \;U'  \autorightarrow{}{} \; 0.
\end{equation}
By Remark 2.15 in \cite{PaulyZulehner2016},  the dual complex \eqref{eq:shortdual} implies the exact sequence
\begin{equation}\label{eq:UVW}
0\; \autorightarrow{}{} U\; \autorightarrow{$\dd _1$}{}\; V \; \autorightarrow{$\dd _2$}{} \;W.
\end{equation}
Then it is apparent that $\dd_1'J_V \dd_1$ is an isomorphism from $U$ to $U'$.
By taking $J_U=\dd_1'J_V \dd_1$, the assumption \eqref{eq:shortdiagramequality} holds.
With $\widetilde{X} \, (X=U, V, W)$ and $\tilde{\dd}_i \, (i=1,2)$ replaced by $X' \, (X=U, V, W)$ and $\dd_i'  \, (i=1,2)$,  the commutative diagram \eqref{eq:shortdiagram} becomes
\begin{equation}\label{eq:shortdiagramdual}
\begin{array}{c}
\xymatrix{
W' \ar[r]^-{\dd_2'} & V' \ar[r]^-{\dd_1'}
                & U' \ar[r]^-{} & 0 \\
&V  \ar[u]^{J_V} & U\ar[u]_{J_U}\ar[l]_{\dd_1}}
\end{array}.
\end{equation}
Applying Theorem~\ref{thm:abstracthelmholtzdecomp} to the commutative diagram \eqref{eq:shortdiagramdual}, we recover the following orthogonal Helmholtz decomposition.
\begin{corollary}\label{cor:orthdec}
Suppose the short exact Hilbert sequence \eqref{eq:shortdual} holds.
Then we have the $(\cdot,\cdot)_{V'}$-orthogonal Helmholtz decomposition
\begin{equation}\label{eq:HV}
 V' = \dd _2' (W'/\ker(\dd_2')) \oplus^{\bot} J_V \dd _1 U.
\end{equation}
That is for any $v'\in V'$, there exist $w'\in W'/\ker \dd _2'$ and $u\in U$ such that
\begin{align}\label{eq:abstracthelmholtzdecomp1dual}
v' &= \dd _2' w' + J_V \dd _1 u,\\
\label{eq:abstracthelmholtzdecomp1stabledual}
 \|v'\|_{V'}^2 & = \|\dd _2'w'\|_{V'}^2 +\|J_V \dd _1 u\|_{V'}^2.
\end{align}
\end{corollary}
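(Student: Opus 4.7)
The plan is to derive the corollary as a specialization of Theorem~\ref{thm:abstracthelmholtzdecomp}, then upgrade the abstract direct sum to an orthogonal one using the Hilbert structure. First I would verify that the paragraph preceding the statement places us exactly inside the hypotheses of Theorem~\ref{thm:abstracthelmholtzdecomp}: the short exact sequence \eqref{eq:shortdual} supplies the top row of diagram \eqref{eq:shortdiagram} with $\widetilde{W}=W'$, $\widetilde{V}=V'$, $\widetilde{U}=U'$ and $\tilde{\dd}_i=\dd_i'$; the Riesz map $J_V$ plays the role of $I_V$; and the composition $J_U:=\dd_1' J_V\dd_1$ is, as already noted, an isomorphism from $U$ to $U'$. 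The commuting relation \eqref{eq:shortdiagramequality} then holds by the very definition of $J_U$. Invoking Theorem~\ref{thm:abstracthelmholtzdecomp} immediately yields the algebraic direct-sum decomposition \eqref{eq:abstracthelmholtzdecomp1dual} together with the stability bound inherited from \eqref{eq:abstracthelmholtzdecomp1stable}.

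What remains is to promote the direct sum to an $(\cdot,\cdot)_{V'}$-orthogonal decomposition, after which the Pythagorean identity \eqref{eq:abstracthelmholtzdecomp1stabledual} is automatic. For this I would fix arbitrary $w'\in W'/\ker(\dd_2')$ and $u\in U$ and unwind the induced inner product via the formula \eqref{eq:Xdualinnerproduct}:
\begin{equation*}
(\dd_2' w',\, J_V\dd_1 u)_{V'} = \langle \dd_2' w',\, J_V^{-1}(J_V\dd_1 u)\rangle = \langle \dd_2' w',\, \dd_1 u\rangle = \langle w',\, \dd_2\dd_1 u\rangle.
\end{equation*}
The exactness of the primal sequence \eqref{eq:UVW} at $V$ forces $\dd_2\dd_1=0$, so the right-hand side vanishes. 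This establishes orthogonality of the two summands, and the identity \eqref{eq:abstracthelmholtzdecomp1stabledual} is then the usual Pythagorean theorem.

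The main obstacle, modest as it is, is simply threading the duality pairings through the Riesz isomorphism in order to expose the composition $\dd_2\dd_1$; once this is done, orthogonality falls out of exactness for free. All the substantive content (existence of the splitting and its stability) is already packaged in Theorem~\ref{thm:abstracthelmholtzdecomp}, so this corollary should read as a Hilbert-space refinement in which the abstract ``$\oplus$'' is sharpened to ``$\oplus^{\bot}$''.
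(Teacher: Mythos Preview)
Your proposal is correct and follows essentially the same route as the paper: the paper sets up diagram \eqref{eq:shortdiagramdual} with $J_U=\dd_1'J_V\dd_1$ and then simply invokes Theorem~\ref{thm:abstracthelmholtzdecomp}, leaving the orthogonality implicit. Your version goes slightly further by actually spelling out the orthogonality computation via \eqref{eq:Xdualinnerproduct} and $\dd_2\dd_1=0$, which is a welcome clarification rather than a departure.
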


Corollary~\ref{cor:orthdec} is indeed one way to express the well-known closed range theorem \cite{Yosida1980} for Hilbert spaces and the associated orthogonal space decomposition.

In the decomposition \eqref{eq:HV}, we need to know $J_V$ which involves the inner product of Hilbert space $V$. Sometimes we do not know exactly the space $V$ or not necessarily need to know. We give an example to illustrate this point.

Consider a dense subspace $V$ of a larger space $Y$ endowed with the inner product $(\cdot,\cdot)_Y$. In most places in this paper, $Y$ is the $L^2$ space for scalar or vector functions with $(\cdot,\cdot)_Y = (\cdot,\cdot)$ being the $L^2$-inner product.
We can equip $V$ with the graph inner product
\begin{equation}\label{eq:graphnorm}
(w, v)_{V}:=(w, v)_Y + (\dd _2w, \dd _2v)_W.
\end{equation}
Or we can start from $Y$ and define $V$ as the subspace of $Y$ with $\|\cdot\|_{V}< \infty$.
By identifying $Y'$ with $Y$ using the inner product $(\cdot,\cdot)_Y$, we have the rigged Hilbert space \cite{Brezis2011, MbyadridModino2001} 
\begin{equation}\label{eq:riggedHilbert}
V\subset Y\subset V'.
\end{equation}
We compute $J_{V}: V \to V'$ as: for any $u\in U$ and $v\in V$
\[
\langle J_{V}\dd _1u, v\rangle=(\dd _1u, v)_{V}=(\dd _1u, v)_Y+(\dd _2\dd _1u, \dd _2v)_W=(\dd _1u, v)_Y.
\]
Thus $J_V$ is the composition of the natural inclusions in \eqref{eq:riggedHilbert} on $\dd _1U$. On the other hand, we can use the commutative diagram to characterize the dual space.

\begin{corollary}\label{cor:dualcharac}
Suppose the short exact Hilbert sequence \eqref{eq:shortdual} holds, and $V$ is equipped with the graph inner product \eqref{eq:graphnorm}.
Assume we have another short exact Hilbert sequence
\[
W'\; \autorightarrow{$\dd _2'$}{}\; \widetilde V \; \autorightarrow{$\dd _1'$}{} \;U'  \autorightarrow{}{} \; 0,
\]
and commutative diagram
\begin{equation*}
\begin{array}{c}
\xymatrix{
W' \ar[r]^-{\dd_2'} & \widetilde{V} \ar[r]^-{\dd_1'}
                & U' \ar[r]^-{} & 0 \\
&V  \ar[u]^{I} & U\ar[u]_{J_U}\ar[l]_{\dd_1}}
\end{array}
\end{equation*}
with $I$ being the embedding operator. Then $\widetilde{V}=V'=\dd _2' (W'/\ker(\dd_2')) \oplus \dd _1 U$.
\end{corollary}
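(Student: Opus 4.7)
The plan is to apply Theorem \ref{thm:abstracthelmholtzdecomp} to the supplied commutative diagram, with the identifications $\widetilde{W}=W'$, $\widetilde{U}=U'$, $\tilde{\dd}_i=\dd_i'$, and $I_V=I$. The assumed commutativity $\dd_1'I\dd_1=J_U$ is precisely \eqref{eq:shortdiagramequality}, all operators in the diagram are bounded, and $J_U$ is an isomorphism, so Theorem \ref{thm:abstracthelmholtzdecomp} delivers the stable direct sum decomposition
\[
\widetilde{V}=\dd_2'(W'/\ker(\dd_2'))\oplus I\dd_1 U.
\]
In parallel, Corollary \ref{cor:orthdec} applied to \eqref{eq:shortdual} yields the orthogonal decomposition $V'=\dd_2'(W'/\ker(\dd_2'))\oplus^{\bot}J_V\dd_1 U$, which shares its first summand with the decomposition of $\widetilde{V}$.

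It then remains to identify $I\dd_1 U\subset\widetilde{V}$ with $J_V\dd_1 U\subset V'$, both regarded as realizations of $\dd_1 U\subset V$. This is where the graph inner product \eqref{eq:graphnorm} is essential: since the sequence \eqref{eq:UVW} is exact one has $\dd_2\dd_1=0$, hence for any $u\in U$ and $v\in V$,
\[
\langle J_V\dd_1 u,v\rangle=(\dd_1 u,v)_V=(\dd_1 u,v)_Y,
\]
so $J_V\dd_1 u$ is exactly the rigged Hilbert image of $\dd_1 u\in V$ in $V'$, matching the role played by $I$ on the $\widetilde{V}$ side. I would then define $\Phi:V'\to\widetilde{V}$ by $\Phi(\dd_2'w'+J_V\dd_1 u)=\dd_2'w'+I\dd_1 u$; uniqueness of the two direct sum decompositions makes $\Phi$ well-defined and linear, continuity of $\Phi$ and $\Phi^{-1}$ follows from the stability estimates in Theorem \ref{thm:abstracthelmholtzdecomp} and Corollary \ref{cor:orthdec}, and bijectivity follows from the injectivity of $I$ together with the injectivity of $\dd_1$ from \eqref{eq:UVW}.

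The main obstacle is exactly this identification $J_V\dd_1 u\leftrightarrow I\dd_1 u$: the abstract commutative diagram alone does not impose it, and it is the graph structure \eqref{eq:graphnorm} combined with $\dd_2\dd_1=0$ that forces the Riesz isomorphism $J_V$ to reduce to the rigged Hilbert embedding on $\dd_1 U$. This is the step that upgrades the abstract topological isomorphism $\widetilde{V}\cong V'$ guaranteed by the splitting data to the canonical equality $\widetilde{V}=V'=\dd_2'(W'/\ker(\dd_2'))\oplus\dd_1 U$ asserted by the corollary.
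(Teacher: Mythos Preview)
Your proof is correct and follows essentially the same route as the paper's (very terse) proof, which reads in full: ``The result immediately follows from the previous illustration, Corollary~\ref{cor:orthdec} and Theorem~\ref{thm:abstracthelmholtzdecomp}.'' You have faithfully unpacked those three ingredients---the graph-norm computation showing $J_V|_{\dd_1 U}$ is the rigged Hilbert inclusion, the orthogonal decomposition of $V'$ from Corollary~\ref{cor:orthdec}, and the direct-sum decomposition of $\widetilde V$ from Theorem~\ref{thm:abstracthelmholtzdecomp}---and correctly identified that matching $J_V\dd_1 u$ with $I\dd_1 u$ is the crux.
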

\begin{proof}
The result immediately follows from the previous illustration, Corollary~\ref{cor:orthdec} and Theorem~\ref{thm:abstracthelmholtzdecomp}.
\end{proof}





We shall present examples in the sequel. Let $\Omega \subset \mathbb R^n, n = 2, 3,$ be a bounded Lipschitz domain.
Denote by $\mathbb{M}$ the space of all $n\times n$ tensors, $\mathbb{S}$ the space of all symmetric $n\times n$ tensors, and $\mathbb{K}$ the space of all skew-symmetric $n\times n$ tensors. For any tensor $\bs\tau\in\mathbb{M}$, let $\sym\bs\tau:=(\bs\tau+\bs\tau^{\intercal})/2$ be the symmetric part of the tensor, and $\skw\bs\tau:=(\bs\tau-\bs\tau^{\intercal})/2$ be the skew-symmetric part. Denote the deviatoric part and the trace of the tensor $\bs\tau$ by $\dev\bs\tau$ and $\tr\bs\tau$ accordingly.
We have
\[
\dev\bs\tau=\bs\tau-\frac{1}{n}(\tr\bs\tau)\bs I.
\]
Define operator $\spn: \mathbb R^3\to \mathbb M$ as follows: for any vector $\bs a=(a_1, a_2, a_3)^{\intercal}\in \mathbb R^3$, the tensor $\spn\bs a\in\mathbb M$ is given by
\[
\spn \bs a :=
\begin{pmatrix}
0 & -a_3 & a_2 \\
a_3 & 0 & -a_1 \\
-a_2 & a_1 & 0
\end{pmatrix}.
\]
Denote by $\mathfrak{S}_3$ the set of all permutations of $(1, 2, 3)$.
Define the set of symmetric third-order tensors as (cf. \cite[Section 2]{Schedensack2016})
\[
\mathbb S(3):=\{\bs\tau\in(\mathbb R^2)^3 : \bs\tau_{j_1,j_2,j_3}=\bs\tau_{j_{\sigma(1)}, j_{\sigma(2)}, j_{\sigma(3)}}\quad\forall~(j_1, j_2, j_3)\in\{1, 2\}^3,\, \sigma\in\mathfrak{S}_3\}.
\]
The symmetric part $\sym \bs\tau\in \mathbb S(3)$ of a tensor $\bs\tau\in (\mathbb R^2)^3$ is defined by
\[
(\sym\bs\tau)_{j_1,j_2,j_3}:=\frac{1}{\#(\mathbb S(3))}\sum_{\sigma\in\mathbb S(3)}\bs\tau_{j_{\sigma(1)}, j_{\sigma(2)}, j_{\sigma(3)}}
\]
for all $(j_1, j_2, j_3)\in\{1, 2\}^3$.
We use standard notation for Sobolev spaces and boldface letters for vector and tensor valued spaces. When we want to emphasize the spatial dimension, we include $\mathbb R^n$ into the notations of spaces.

Recall the de Rham complexes in two dimensions
\begin{align}
\quad\; 0\;\autorightarrow{}{} \;H_{0}^1(\Omega)\;\autorightarrow{$\curl$}{} \; \boldsymbol H_{0}(\div , \Omega)\; \autorightarrow{$\div $}{} \;L_0^2(\Omega)\;\autorightarrow{}{}\;0,\label{eq:deRhamcomplex2d-div0} \quad\;\\
\quad\;\; \mathbb R\;\autorightarrow{}{} \;H^1(\Omega)\;\autorightarrow{$\curl$}{} \; \boldsymbol H(\div , \Omega)\; \autorightarrow{$\div $}{} \;L^2(\Omega)\;\autorightarrow{}{}\;0,\label{eq:deRhamcomplex2d-div} \quad\;\; \\
0\;\autorightarrow{}{} \;H_{0}^{s+2}(\Omega)\;\autorightarrow{$\curl$}{} \; \boldsymbol H_{0}^{s+1}(\Omega; \mathbb{R}^2)\; \autorightarrow{$\div$}{} \;H_{0}^{s}(\Omega)\;\autorightarrow{}{}\; 0,\label{eq:deRhamcomplex2d-H0} \\
\mathbb R\;\autorightarrow{}{} \;H^{s+2}(\Omega)\;\autorightarrow{$\curl$}{} \; \boldsymbol H^{s+1}(\Omega; \mathbb{R}^2)\; \autorightarrow{$\div$}{} \;H^{s}(\Omega)\;\autorightarrow{}{}\;0, \label{eq:deRhamcomplex2d-H}
\end{align}
and the de Rham complexes in three dimensions
\begin{align}
0\autorightarrow{}{} H_{0}^1(\Omega)\autorightarrow{$\grad$}{} \bs H_0(\curl, \Omega) \autorightarrow{$\curl$}{} \bs H_0(\div , \Omega) \autorightarrow{$\div $}{} L_0^2(\Omega)\autorightarrow{}{} 0, \label{eq:deRhamcomplex3d-0} \\
\mathbb R\autorightarrow{}{} H^1(\Omega)\autorightarrow{$\grad$}{} \bs H(\curl, \Omega) \autorightarrow{$\curl$}{} \bs H(\div , \Omega) \autorightarrow{$\div $}{} L^2(\Omega)\autorightarrow{}{}0, \label{eq:deRhamcomplex3d} \\
\resizebox{.9\hsize}{!}{$
0\autorightarrow{}{} H_0^{s+3}(\Omega)\autorightarrow{$\grad$}{} \bs H_0^{s+2}(\Omega; \mathbb{R}^3) \autorightarrow{$\curl$}{} \bs H_0^{s+1}(\Omega; \mathbb{R}^3) \autorightarrow{$\div $}{} H_0^{s}(\Omega)\autorightarrow{}{} 0,$} \label{eq:deRhamcomplex3d-H0}\\
\resizebox{.9\hsize}{!}{$
\mathbb R\autorightarrow{}{} H^{s+3}(\Omega)\autorightarrow{$\grad$}{} \bs H^{s+2}(\Omega; \mathbb{R}^3) \autorightarrow{$\curl$}{} \bs H^{s+1}(\Omega; \mathbb{R}^3) \autorightarrow{$\div $}{} H^{s}(\Omega)\autorightarrow{}{}0,$} \label{eq:deRhamcomplex3d-H}
\end{align}
with $s\in\mathbb R$. In 2-D, as $\curl$ is a rotation of $\grad$ operator, we could have similar sequences by replacing $\curl$ by $\grad$, $\div$ by $\rot$, and $\bs H(\div, \Omega)$ space by $\bs H(\rot, \Omega)$ space. For example, an analogue of \eqref{eq:deRhamcomplex2d-H} is
\begin{equation}
\mathbb R\;\autorightarrow{}{} \;H^{s+2}(\Omega)\;\autorightarrow{$\grad$}{} \; \boldsymbol H^{s+1}(\Omega; \mathbb{R}^2)\; \autorightarrow{$\rot$}{} \;H^{s}(\Omega)\;\autorightarrow{}{}\;0. \label{eq:deRhamcomplex2d-H-rot}
\end{equation}

When $\Omega$ is simply connected with connected boundary, the $L^2$ de Rham complexes \eqref{eq:deRhamcomplex2d-div0}-\eqref{eq:deRhamcomplex2d-div} and \eqref{eq:deRhamcomplex3d-0}-\eqref{eq:deRhamcomplex3d} are exact \cite{GiraultRaviart1986, ArnoldFalkWinther2006, ArnoldFalkWinther2010}, the complexes \eqref{eq:deRhamcomplex2d-H} and \eqref{eq:deRhamcomplex3d-H}-\eqref{eq:deRhamcomplex2d-H-rot}
are exact if $s$ is an integer, and the complexes \eqref{eq:deRhamcomplex2d-H0} and \eqref{eq:deRhamcomplex3d-H0}
are exact if $s$ is a nonnegative integer \cite{PaulyZulehner2016, CostabelMcIntosh2010}.
When $\Omega$ is a bounded domain starlike with respect to a ball, the complexes \eqref{eq:deRhamcomplex2d-H} and \eqref{eq:deRhamcomplex3d-H}-\eqref{eq:deRhamcomplex2d-H-rot}
are exact for any $s\in\mathbb R$, and the complexes \eqref{eq:deRhamcomplex2d-H0} and \eqref{eq:deRhamcomplex3d-H0}
are exact if $s$ is nonnegative and $s-\frac{1}{2}$ is not an integer \cite[p. 301]{CostabelMcIntosh2010}.
Hereafter, we only consider the domain $\Omega$ without harmonic forms, i.e.,
we always assume the bounded Lipschitz domain $\Omega$ is simply connected with connected boundary in this paper.
It will be more sophisticated in the discretization of decoupled formulations and in designing fast solvers and adaptive algorithms when the non-trivial harmonic forms appear in the Helmholtz decompositions.
For ease of presentation, we use $\bs H^s(\Omega)$ to denote $\bs H^s(\Omega; \mathbb{R}^n)$ for $n=1, 2, 3$.

We recall the well known
$L^2$-orthogonal Helmholtz decomposition (cf.~\cite{Helmholtz1858, ArnoldFalkWinther2006})
\[
\boldsymbol L^2(\Omega) = \nabla H_0^1(\Omega) \oplus^{\perp} \curl\left(\bs H^1(\Omega)/\ker(\curl)\right)
\]
in two and three dimensions from the exact sequences \eqref{eq:deRhamcomplex2d-H} and \eqref{eq:deRhamcomplex3d-H} with $s=-1$, and
the $H^{-1}$-orthogonal decomposition of $\boldsymbol H^{-1}(\Omega; \mathbb{R}^2)$ (cf.~\cite[Lemma 2.4]{OlshanskiiPetersReusken2006})
\[
\boldsymbol{H}^{-1}(\Omega; \mathbb{R}^2) = \nabla L_0^2(\Omega) \oplus ^{\bot} \Delta (\curl H_0^2(\Omega))
\]
from the exact sequences \eqref{eq:deRhamcomplex2d-H-rot} with $s=-2$.
In the following, we present several less well-known and some new Helmholtz decompositions.

\subsection{Helmholtz decomposition of $\bs H^{-1}({\rm div})$ space}\label{subsec:divm1HD}
For $n=2$ and $3$,
define
\[
\boldsymbol H^{-1}(\div , \Omega):=\{\boldsymbol{\phi}\in \boldsymbol{H}^{-1}(\Omega): \div \boldsymbol{\phi}\in H^{-1}(\Omega)\}
\]
with squared norm
$
\|\boldsymbol{\phi}\|_{H^{-1}(\div )}^2:=\|\boldsymbol{\phi}\|_{-1}^2+\|\div \boldsymbol{\phi}\|_{-1}^2.
$
\begin{lemma}
The complex
\[
\bs L^2(\Omega)\;\autorightarrow{$\curl$}{} \; \boldsymbol H^{-1}({\rm div}, \Omega)\; \autorightarrow{$\div$}{} \;H^{-1}(\Omega)\;\autorightarrow{}{}\;0
\]
is exact in both two and three dimensions. 
\end{lemma}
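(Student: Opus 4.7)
The plan is to verify the three conditions for exactness at each spot: (i) the composition $\div \circ \curl$ vanishes, (ii) $\div: \bs H^{-1}(\div, \Omega) \to H^{-1}(\Omega)$ is surjective, and (iii) every $\bs \phi \in \bs H^{-1}(\div, \Omega)$ with $\div \bs \phi = 0$ lies in the image of $\curl$. Condition (i) is immediate from the distributional identity $\div \curl = 0$, which also shows that $\curl \bs L^2(\Omega)$ is a subset of $\bs H^{-1}(\div, \Omega)$, so that the arrow labelled by $\curl$ in the complex makes sense.

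For (ii), I would reduce to a statement about the strictly smaller space $\bs L^2(\Omega)$. The de Rham complex \eqref{eq:deRhamcomplex3d-H} in 3D (respectively \eqref{eq:deRhamcomplex2d-H} in 2D) evaluated at $s=-1$ asserts that $\div: \bs L^2(\Omega) \to H^{-1}(\Omega)$ is surjective, under the standing assumption that $\Omega$ is simply connected with connected boundary. Since $\bs L^2(\Omega) \hookrightarrow \bs H^{-1}(\div, \Omega)$ (any $\bs L^2$ function is certainly in $\bs H^{-1}$ with divergence in $H^{-1}$), surjectivity of $\div$ on the larger space follows for free.

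For (iii), the trick is to drop one derivative and exploit a different instance of the same complex. Given $\bs \phi \in \bs H^{-1}(\div, \Omega)$ with $\div \bs \phi = 0$ in $H^{-1}(\Omega)$, one has a fortiori $\div \bs \phi = 0$ in $H^{-2}(\Omega)$. Now apply the de Rham complex \eqref{eq:deRhamcomplex3d-H} (respectively \eqref{eq:deRhamcomplex2d-H}) at $s = -2$, which reads
\[
\bs L^2(\Omega) \xrightarrow{\curl} \bs H^{-1}(\Omega) \xrightarrow{\div} H^{-2}(\Omega) \to 0,
\]
and is exact under the same topological assumptions on $\Omega$. This hands us a potential $\bs v \in \bs L^2(\Omega)$ with $\bs \phi = \curl \bs v$, which is exactly what is required.

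The only mild subtlety is that the two halves of the argument live at different Sobolev indices: surjectivity is pulled down from $s=-1$, while exactness at the middle spot uses the weaker constraint from $s=-2$. Beyond this bookkeeping no new analytic machinery is needed, since the relevant exactness of the standard $H^s$ de Rham complexes has already been recalled. I do not anticipate a genuine obstacle; the argument is essentially the observation that $\bs H^{-1}(\div, \Omega)$ is sandwiched between the $s=-1$ and $s=-2$ levels of the de Rham tower and inherits the corresponding pieces of exactness from each side.
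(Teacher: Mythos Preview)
Your proposal is correct and matches the paper's proof essentially line for line: the paper also invokes the de Rham complex \eqref{eq:deRhamcomplex2d-H}/\eqref{eq:deRhamcomplex3d-H} at $s=-2$ to identify $\ker(\div)\cap\bs H^{-1}(\Omega)$ with $\curl\bs L^2(\Omega)$, and at $s=-1$ together with the inclusion $\bs L^2(\Omega)\subset\bs H^{-1}(\div,\Omega)$ to get surjectivity of $\div$. The only cosmetic difference is that the paper phrases (iii) via the set equality $\ker(\div)\cap\bs H^{-1}(\div,\Omega)=\ker(\div)\cap\bs H^{-1}(\Omega)$ rather than speaking of dropping to $H^{-2}$, but this is the same observation.
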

\begin{proof}
We know $\ker(\div)\cap \bs H^{-1}(\Omega) = \curl (\bs L^2 (\Omega))$ from the exact sequence \eqref{eq:deRhamcomplex2d-H} or \eqref{eq:deRhamcomplex3d-H} with $s = -2$. Obviously $\ker(\div)\cap \bs H^{-1}(\div, \Omega) = \ker(\div)\cap \bs H^{-1}(\Omega)$.

With $s = -1$, it holds $\div\boldsymbol L^2(\Omega; \mathbb{R}^n)=H^{-1}(\Omega)$, which together with $\boldsymbol L^2(\Omega; \mathbb{R}^n)\subset\boldsymbol H^{-1}({\rm div}, \Omega)$ indicates $\div\boldsymbol H^{-1}({\rm div}, \Omega)=H^{-1}(\Omega)$.
\end{proof}

With this exact sequence, we build up the commutative diagram
\begin{equation}\label{Hdiv_1diagram}
\begin{array}{c}
\xymatrix{
\bs L^2(\Omega) \ar[r]^-{\curl} & \boldsymbol H^{-1}({\rm div}, \Omega) \ar[r]^-{\rm div}
                & H^{-1} (\Omega) \ar[r]^-{} & 0\\
&\boldsymbol H_0(\curl , \Omega) \ar[u]^{I} & H_0^1(\Omega)\ar[u]_{\Delta}\ar[l]_-{\grad}}
\end{array}.
\end{equation}
By Theorem~\ref{thm:abstracthelmholtzdecomp},
we obtain the Helmholtz decomposition in both two and three dimensions
\begin{equation}\label{eq:Hdiv_1helmholtzdecomp}
\boldsymbol H^{-1}(\div , \Omega) = \nabla H_0^1(\Omega) \oplus \curl\left(\bs L^2(\Omega)/\ker(\curl)\right).
\end{equation}
In two dimensions, $\ker(\curl) = \mathbb R$ and thus the Helmholtz decomposition \eqref{eq:Hdiv_1helmholtzdecomp} reads as
\[
\boldsymbol H^{-1}(\div , \Omega) = \nabla H_0^1(\Omega) \oplus \curl L_0^2(\Omega)
\]
which has been presented in \cite[Proposition 2.3]{BrezziFortin1986}. In three dimensions, $\ker(\curl) = \nabla H^1(\Omega)$, it becomes
\[
\boldsymbol H^{-1}(\div , \Omega) = \nabla H_0^1(\Omega) \oplus \curl\left(\bs L^2(\Omega; \mathbb{R}^3)/\nabla H^1(\Omega)\right).
\]

Applying Remark 2.15 in \cite{PaulyZulehner2016} to the short exact sequence
\[
0\; \autorightarrow{}{} H_0^{1}(\Omega)\; \autorightarrow{$\grad$}{}\; \boldsymbol H_0(\curl , \Omega) \; \autorightarrow{$\curl$}{} \;\bs L^2(\Omega),
\]
we know the dual complex
\[
\bs L^2(\Omega)\;\autorightarrow{$\curl$}{} \; (\boldsymbol H_0(\curl , \Omega))^{\prime}\; \autorightarrow{$\div$}{} \;H^{-1}(\Omega)\;\autorightarrow{}{}\;0
\]
is also exact.
Then by Corollary~\ref{cor:dualcharac}, we get from the commutative diagram \eqref{Hdiv_1diagram} that
\begin{equation}\label{eq:curldiv_1spaceequiv}
(\boldsymbol H_0(\curl , \Omega))^{\prime}=\boldsymbol H^{-1}({\rm div}, \Omega).
\end{equation}
The inclusion $\boldsymbol H^{-1}({\rm div}, \Omega)\subset(\boldsymbol H_0(\curl , \Omega))^{\prime}$ has been proved in the book~\cite[p. 338]{Braess2007} for $n=2$.

%

As we mentioned early that $J_{H(\curl)}$ is just identity operator on $\nabla H_0^1(\Omega)$, thus the decomposition \eqref{eq:Hdiv_1helmholtzdecomp} is orthogonal in the inner product $(\cdot,\cdot)_{H(\curl)'}$, but not in the $L^2$ inner product nor in the $H^{-1}$ inner product. 

%
%

%


\subsection{Helmholtz decomposition of $\bs H^{-1}({\curl})$ space}\label{subsec:curlm1HD}
%
Following \cite{ChenWuZhongZhou2016}, we introduce the space
\[
K_0^c:=\{\boldsymbol{\phi}\in \bs H_0(\curl, \Omega): \mathrm{div}\boldsymbol{\phi}=0\}=\bs H_0(\curl, \Omega)/\grad H_0^1
\]
equipped with norm $\|\cdot\|_{H(\curl)}$.
Noting that $\curl K_0^c=\curl\bs H_0(\curl, \Omega)$, we get the following exact sequence from the 3D de Rham complex \eqref{eq:deRhamcomplex3d-0}
\[
0\; \autorightarrow{}{}K_0^c \autorightarrow{$\curl$}{} \bs H_0(\div , \Omega) \autorightarrow{$\div $}{} L_0^2(\Omega)\autorightarrow{}{} \; 0,
\]
which together with Remark 2.15 in \cite{PaulyZulehner2016} implies the exactness of the dual complex
\[
0\autorightarrow{}{}L_0^2(\Omega)\;\autorightarrow{$\grad$}{} \; \boldsymbol (\bs H_0(\div , \Omega))' \; \autorightarrow{$\curl$}{} \;(K_0^c)'\;\autorightarrow{}{}\;0.
\]

We then construct an exact sequence with dual spaces to characterize the dual space $(\bs H_0(\div , \Omega))'$.
Define
$$
\boldsymbol H^{-1}(\curl, \Omega) = \{\boldsymbol{\phi}\in \boldsymbol{H}^{-1}(\Omega; \mathbb{R}^3): \curl\boldsymbol{\phi}\in \boldsymbol{H}^{-1}(\Omega; \mathbb{R}^3)\}
$$
with squared norm
$
\|\boldsymbol{\phi}\|_{H^{-1}(\curl )}^2:=\|\boldsymbol{\phi}\|_{-1}^2+\|\curl \boldsymbol{\phi}\|_{-1}^2.
$
\begin{lemma}\label{lem:temp20180205}
The complex
\[
0\autorightarrow{}{}L_0^2(\Omega)\;\autorightarrow{$\grad$}{} \; \boldsymbol H^{-1}({\curl}, \Omega)\; \autorightarrow{$\curl$}{} \;(K_0^c)'\;\autorightarrow{}{}\;0
\]
is exact.
\end{lemma}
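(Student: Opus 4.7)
The plan is to verify the three pieces of exactness separately: (i) injectivity of $\grad:L_0^2(\Omega)\to\bs H^{-1}(\curl,\Omega)$; (ii) the kernel identity $\ker(\curl)\cap\bs H^{-1}(\curl,\Omega)=\grad L_0^2(\Omega)$; and (iii) surjectivity of $\curl:\bs H^{-1}(\curl,\Omega)\to (K_0^c)'$. Parts (i) and (ii) follow quickly from the de Rham complex \eqref{eq:deRhamcomplex3d-H} at negative integer indices, while (iii) requires explicit construction of a preimage via Riesz representation and a vector potential.

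For (i), $\grad p=0$ in distribution forces $p$ to be a constant and the zero-mean constraint then kills it. For (ii), exactness of \eqref{eq:deRhamcomplex3d-H} at $s=-3$, namely $\mathbb R\to L^2\to\bs H^{-1}\to\bs H^{-2}\to H^{-3}\to 0$, valid for integer $s$ under the simply-connected hypothesis on $\Omega$, identifies the distributional kernel of $\curl$ on $\bs H^{-1}(\Omega;\mathbb R^3)$ with $\grad L^2(\Omega)/\mathbb R=\grad L_0^2(\Omega)$; the reverse inclusion is immediate from $\curl\grad=0$.

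For (iii), given $\ell\in(K_0^c)'$, Riesz representation on $K_0^c$ endowed with the $\bs H(\curl)$-inner product produces $\bs u\in K_0^c$ with
\[
\ell(\bs v)=(\bs u,\bs v)+(\curl\bs u,\curl\bs v)\quad\text{for all }\bs v\in K_0^c.
\]
Since $\bs u$ is divergence-free in $\bs L^2(\Omega;\mathbb R^3)$, exactness of \eqref{eq:deRhamcomplex3d-H} at $s=-1$ furnishes a vector potential $\bs\xi\in\bs H^1(\Omega;\mathbb R^3)$ with $\curl\bs\xi=\bs u$. I then set
\[
\bs\phi:=\bs\xi+\curl\bs u\in\bs L^2(\Omega;\mathbb R^3)\subset\bs H^{-1}(\curl,\Omega).
\]
For any $\bs v\in K_0^c\subset\bs H_0(\curl,\Omega)$, the duality $\langle\curl\bs\phi,\bs v\rangle=(\bs\phi,\curl\bs v)$ together with integration by parts (the boundary contribution vanishes because $\bs v\times\bs n=0$) yields
\[
\langle\curl\bs\phi,\bs v\rangle=(\bs\xi,\curl\bs v)+(\curl\bs u,\curl\bs v)=(\curl\bs\xi,\bs v)+(\curl\bs u,\curl\bs v)=\ell(\bs v),
\]
so $\curl\bs\phi=\ell$ in $(K_0^c)'$.

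The principal conceptual hurdle is articulating how $\curl\bs\phi\in\bs H^{-1}$ defines an element of $(K_0^c)'$ when $\bs\phi\in\bs H^{-1}(\curl,\Omega)\setminus\bs L^2$, because $\bs H^{-1}$ does not pair naturally with all of $K_0^c$. I resolve this by invoking \eqref{eq:deRhamcomplex3d-H} at $s=-2$: the distribution $\curl\bs\phi$ is automatically divergence-free, hence lies in $\bs H^{-1}\cap\ker(\div)=\curl\bs L^2(\Omega;\mathbb R^3)$. Choosing any representative $\bs w\in\bs L^2(\Omega;\mathbb R^3)$ with $\curl\bs w=\curl\bs\phi$, I declare $\langle\curl\bs\phi,\bs v\rangle_{(K_0^c)'}:=(\bs w,\curl\bs v)$; this is well-defined, since any two representatives differ by $\grad r$ with $r\in H^1(\Omega)$ and $(\grad r,\curl\bs v)=0$ for $\bs v\in K_0^c\subset\bs H_0(\curl,\Omega)$ by integration by parts. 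With this interpretation, the construction above (witnessed by a $\bs\phi\in\bs L^2$) completes the proof of surjectivity.
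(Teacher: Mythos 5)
Your proof is correct, and while the kernel identification (parts (i)--(ii)) coincides with the paper's argument, your surjectivity argument takes a genuinely different route. The paper equips $K_0^c$ with the reduced inner product $(\curl\cdot,\curl\cdot)$ --- legitimate by the Poincar\'e inequality on $K_0^c$ --- so that Riesz representation gives $(K_0^c)'=\curl\curl K_0^c$ in one stroke, with preimage $\curl\bs u\in\bs L^2$; you instead use the full $\bs H(\curl)$ inner product and must therefore also represent the zeroth-order term $(\bs u,\bs v)$, which you do by pulling in a vector potential $\bs\xi$ with $\curl\bs\xi=\bs u$ and taking $\bs\phi=\bs\xi+\curl\bs u$. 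Both constructions are valid; yours trades the Poincar\'e inequality for the existence of a vector potential (itself a consequence of the exact sequence \eqref{eq:deRhamcomplex3d-H}), so neither is strictly more elementary. The more substantive divergence is in how the pairing $\langle\curl\bs\phi,\bs v\rangle$ is made sense of for $\bs v\in K_0^c$: you define it directly through an $\bs L^2$ representative $\bs w$ with $\curl\bs w=\curl\bs\phi$ and check independence of the representative, whereas the paper routes through the previously established identification $(\bs H_0(\curl,\Omega))'=\bs H^{-1}(\div,\Omega)$ from \eqref{eq:curldiv_1spaceequiv}. The paper's detour is not wasted effort: it yields the chain of identities \eqref{eq:temp20180205}, $(K_0^c)'=\curl\bs H^{-1}(\curl,\Omega)=(\bs H_0(\curl,\Omega))'=\bs H^{-1}(\div,\Omega)$, which is invoked later to justify the norm equivalence \eqref{eq:normequivalenceSigma} for the primal formulation of the fourth order $\curl$ equation. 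Your argument proves the lemma as stated but does not by itself deliver that identity, so if you adopted your proof you would need to supply \eqref{eq:temp20180205} separately.
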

\begin{proof}
We have $\ker(\curl)\cap\boldsymbol{H}^{-1}(\Omega; \mathbb{R}^3)=\grad L_0^2(\Omega)$ by taking $s=-3$ in the exact sequence \eqref{eq:deRhamcomplex3d-H}.
Apparently $\ker(\curl)\cap\boldsymbol H^{-1}({\curl}, \Omega)=\ker(\curl)\cap\boldsymbol{H}^{-1}(\Omega; \mathbb{R}^3)$, which implies the exactness of the former complex.

By the Poincar\'e inequality on $K_0^c$ \cite{Monk2003,Hiptmair2002}, $(\curl \cdot, \curl \cdot)$ defines an inner product on $K_0^c$ and $(\curl \curl)^{-1}: (K_0^c)' \to K_0^c$ is an isomorphism. Given a $\bs f\in (K_0^c)'$, finding $\bs u\in K_0^c$ such that $\curl\curl \bs u = \bs f$ in $(K_0^c)'$ is the Maxwell's equation with divergence free constraint. Since $\curl K_0^c\subset \boldsymbol H^{-1}({\curl}, \Omega)$, we obtain
$$
(K_0^c)'=\curl\curl K_0^c\subset\curl\boldsymbol H^{-1}(\curl, \Omega).
$$
Due to \eqref{eq:curldiv_1spaceequiv},
\[
\curl\boldsymbol H^{-1}(\curl, \Omega)\subset \boldsymbol H^{-1}({\rm div}, \Omega)=(\boldsymbol H_0(\curl , \Omega))^{\prime}\subset (K_0^c)'.
\]
Therefore
\begin{equation}\label{eq:temp20180205}
(K_0^c)'=\curl\boldsymbol H^{-1}(\curl, \Omega)=(\boldsymbol H_0(\curl , \Omega))^{\prime}=\boldsymbol H^{-1}({\rm div}, \Omega),
\end{equation}
as required.
\end{proof}

Then we construct the commutative diagram
\begin{equation}\label{cd:Hm1curl}
\begin{array}{c}
\xymatrix{
L_0^2(\Omega) \ar[r]^-{\grad} & \boldsymbol H^{-1}({\curl}, \Omega) \ar[r]^-{\curl}
                & (K_0^c)' \ar[r]^-{} & 0 \\
&\boldsymbol H_0(\div , \Omega) \ar[u]^{I} & K_0^c \ar[u]_{\curl\curl}\ar[l]_-{\curl}}
\end{array}.
\end{equation}
Using Theorem~\ref{thm:abstracthelmholtzdecomp},
it holds the stable Helmholtz decomposition
\begin{equation}\label{eq:Hcurl_1helmholtzdecomp}
\boldsymbol H^{-1}({\curl}, \Omega) = \nabla L_0^2(\Omega) \oplus \curl K_0^c = \nabla L_0^2(\Omega) \oplus \curl \bs H_0(\curl, \Omega).
\end{equation}

According to Corollary~\ref{cor:dualcharac} and commutative diagram \eqref{cd:Hm1curl}, it follows
\[
(\boldsymbol H_0(\div , \Omega))'=\boldsymbol H^{-1}(\curl, \Omega),
\]
and thus the decomposition \eqref{eq:Hcurl_1helmholtzdecomp} is also orthogonal in $(\cdot,\cdot)_{H(\div)'}$ inner product.
A decomposition of the dual space of $\boldsymbol H_0^m(\div , \Omega):=\{\bs\phi\in\boldsymbol H_0(\div , \Omega): \div\bs\phi\in H_0^m(\Omega)\}$
was presented in \cite{AmroucheCiarletCiarlet2010}.

\subsection{Helmholtz decomposition of symmetric tensors: HHJ complex}\label{subsec:div2HD}
We now consider differential complexes involving symmetric tensor functions. 


\begin{lemma}
We have the following exact sequence
\begin{equation}\label{eq:hhjcomplex2d-0427}
\boldsymbol{H}^{1}(\Omega; \mathbb{R}^2)\;\autorightarrow{$\sym\bs\curl$}{} \; \boldsymbol{L}^{2}(\Omega; \mathbb{S})\; \autorightarrow{$\div \boldsymbol{\div }$}{} \;H^{-2}(\Omega)\;\autorightarrow{}{}\;0.
\end{equation}
\end{lemma}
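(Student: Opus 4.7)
The plan is to establish the three ingredients of exactness in turn: (a) the complex identity $\div\bs\div\sym\bs\curl = 0$; (b) surjectivity of $\div\bs\div:\bs L^2(\Omega;\mathbb S)\to H^{-2}(\Omega)$; and (c) the inclusion $\ker(\div\bs\div)\subseteq \sym\bs\curl\,\bs H^1(\Omega;\mathbb R^2)$. Steps (a) and (b) are routine. For (a), a 2D skew-symmetric $\bs\eta\in\bs L^2$ is determined by one scalar $a$, so $\div\bs\div\bs\eta = \partial_{xy}a - \partial_{yx}a = 0$, giving $\div\bs\div = \div\bs\div\circ\sym$; since each row of $\bs\curl\bs\phi$ is a scalar 2D curl and hence divergence free, $\div\bs\div\sym\bs\curl\bs\phi = \div\bs\div\bs\curl\bs\phi = 0$. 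For (b), I would solve the biharmonic variational problem on $H_0^2(\Omega)$, namely find $u\in H_0^2(\Omega)$ with $(\nabla^2 u,\nabla^2 v) = \langle f,v\rangle$ for all $v\in H_0^2(\Omega)$, well posed by Lax--Milgram; setting $\bs\tau := \nabla^2 u\in\bs L^2(\Omega;\mathbb S)$ and integrating by parts yields $\div\bs\div\bs\tau = f$.

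The kernel characterization (c) is the main obstacle, and I would attack it by peeling a $\sym\bs\curl$-piece off $\bs\tau$ until what remains is $\bs\div$-free, then invoking the classical Airy stress function. The key algebraic input is the identity $\bs\div\,\sym\bs\curl\bs\phi = \tfrac{1}{2}\curl(\div\bs\phi)$, verified by a short direct computation. Given $\bs\tau\in\bs L^2(\Omega;\mathbb S)$ with $\div\bs\div\bs\tau = 0$, the distribution $\bs\div\bs\tau\in\bs H^{-1}(\Omega;\mathbb R^2)$ is divergence free in $H^{-2}$, so exactness of the de Rham complex \eqref{eq:deRhamcomplex2d-H} at $s=-2$ supplies some $g\in L^2(\Omega)$ with $\bs\div\bs\tau = \curl g$. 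Since $\div:\bs H^1(\Omega;\mathbb R^2)\to L^2(\Omega)$ is surjective by the same complex at $s=0$, I can pick $\bs\phi\in\bs H^1(\Omega;\mathbb R^2)$ with $\div\bs\phi = 2g$, making $\bs\sigma := \bs\tau - \sym\bs\curl\bs\phi\in\bs L^2(\Omega;\mathbb S)$ satisfy $\bs\div\bs\sigma = 0$.

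The Airy stress function step then closes the argument. Applying \eqref{eq:deRhamcomplex2d-H} row by row at $s=-1$ produces $\bs r\in\bs H^1(\Omega;\mathbb R^2)$ with $\bs\sigma = \bs\curl\bs r$; the symmetry $\bs\sigma_{12} = \bs\sigma_{21}$ is equivalent to $\div\bs r = 0$, and one more invocation of \eqref{eq:deRhamcomplex2d-H} at $s=0$ yields $q\in H^2(\Omega)$ with $\bs r = \curl q$. Hence $\bs\sigma = \bs\curl(\curl q)$, which is automatically symmetric, so $\bs\sigma = \sym\bs\curl(\curl q)$ and $\bs\tau = \sym\bs\curl(\bs\phi + \curl q) \in \sym\bs\curl\,\bs H^1(\Omega;\mathbb R^2)$. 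The delicate point in this strategy is keeping the Sobolev indices consistent across the three successive invocations of \eqref{eq:deRhamcomplex2d-H}, which is precisely why the exactness hypothesis for integer $s$ on a simply connected $\Omega$ with connected boundary (recorded earlier in the section) is essential.
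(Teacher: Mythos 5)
Your proposal is correct, but it takes a genuinely different route from the paper on both nontrivial ingredients. For surjectivity of $\div\bs\div$, the paper stays entirely inside the complex machinery: it applies the exact sequence \eqref{eq:deRhamcomplex2d-H} row-wise at $s=-1$ and then at $s=-2$ to get $\div\bs\div\,\boldsymbol{L}^{2}(\Omega;\mathbb{M})=\div\boldsymbol{H}^{-1}(\Omega;\mathbb{R}^2)=H^{-2}(\Omega)$, and then passes from $\mathbb M$ to $\mathbb S$ via the splitting $\boldsymbol{L}^{2}(\Omega;\mathbb{M})=\boldsymbol{L}^{2}(\Omega;\mathbb{S})+\boldsymbol{L}^{2}(\Omega;\mathbb{K})$ together with $\div\bs\div\,\boldsymbol{L}^{2}(\Omega;\mathbb{K})=0$. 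You instead solve the coercive biharmonic problem on $H_0^2(\Omega)$ and take $\bs\tau=\nabla^2 u$; this is equally valid (it is just the relation $\Delta^2=\div\bs\div\,\nabla^2$ that the paper exploits in the very next commutative diagram), costs you a Lax--Milgram solve but buys a symmetric preimage directly, with no detour through general tensors. For the kernel identity $\ker(\div\bs\div)=\img(\sym\bs\curl)$ the paper simply cites \cite{HuangHuangXu2011}, whereas you supply a self-contained proof: the identity $\bs\div\,\sym\bs\curl\bs\phi=\tfrac12\curl(\div\bs\phi)$ (which checks out with a consistent sign convention for $\curl$), the peeling step reducing to a row-wise divergence-free symmetric $\bs\sigma$, and the Airy potential $\bs\sigma=\bs\curl\curl q$, each step using exactness of \eqref{eq:deRhamcomplex2d-H} at $s=-2,-1,0$ respectively, all covered by the standing assumption that $\Omega$ is simply connected with connected boundary. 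This peeling argument closely parallels the technique the paper itself uses later in the proof of Lemma~\ref{lm:H-2elasticity} for the $\rot\mathbf{rot}$ complex, so your version is both correct and stylistically consistent with the rest of the section; its only cost is length where the paper settles for a citation.
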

\begin{proof}
The identity $\ker(\div \bs \div) = \img(\sym\bs\curl)$ can be found in \cite{HuangHuangXu2011}.
Thanks to the exact sequence \eqref{eq:deRhamcomplex2d-H} with $s=-1, -2$,
\[
\div\boldsymbol{\div}\boldsymbol{L}^{2}(\Omega; \mathbb{M})=\div\boldsymbol{H}^{-1}(\Omega; \mathbb{R}^2)=H^{-2}(\Omega).
\]
Noting that $\boldsymbol{L}^{2}(\Omega; \mathbb{M})=\boldsymbol{L}^{2}(\Omega; \mathbb{S})+\boldsymbol{L}^{2}(\Omega; \mathbb{K})$ and $\div\boldsymbol{\div}\boldsymbol{L}^{2}(\Omega; \mathbb{K})=0$, we achieve $\div\boldsymbol{\div}\boldsymbol{L}^{2}(\Omega; \mathbb{S})=\div\boldsymbol{\div}\boldsymbol{L}^{2}(\Omega; \mathbb{M})=H^{-2}(\Omega)$.
\end{proof}

With the exact sequence \eqref{eq:hhjcomplex2d-0427}
we construct the following commutative diagram
$$
\begin{array}{c}
\xymatrix{
\boldsymbol{H}^{1}(\Omega; \mathbb{R}^2) \ar[r]^-{\sym\bs\curl}
                & \boldsymbol{L}^{2}(\Omega; \mathbb{S}) \ar[r]^-{\div \boldsymbol{\div }} & H^{-2}(\Omega) \ar[r]^-{} & 0\\
&\boldsymbol{L}^{2}(\Omega; \mathbb{S}) \ar[u]^{I} & H_0^2(\Omega)\ar[u]_-{\Delta^{2}}\ar[l]_-{\boldsymbol \nabla^2}}
\end{array}.
$$
By Corollary~\ref{cor:orthdec},
we recover the $L^2$-orthogonal Helmholtz decomposition obtained in~\cite[Lemma~3.1]{HuangHuangXu2011}
$$
\boldsymbol{L}^{2}(\Omega; \mathbb{S}) = \sym\bs\curl \boldsymbol{H}^{1}(\Omega; \mathbb{R}^2) \oplus^{\bot} \boldsymbol\nabla^2 H_0^2(\Omega).
$$

We can follow the definition of $\bs H^{-1}(\div,\Omega)$ to introduce the following spaces
\[
\boldsymbol{H}^{-1}(\div \boldsymbol{\div },\Omega; \mathbb{S}):=\{\boldsymbol{\tau}\in \boldsymbol{L}^{2}(\Omega; \mathbb{S}): \div \mathbf{div}\boldsymbol{\tau}\in H^{-1}(\Omega)\}
\]
with squared norm $
\|\boldsymbol{\tau}\|_{\boldsymbol{H}^{-1}(\div \boldsymbol{\div })}^2:=\|\boldsymbol{\tau}\|_{0}^2+\|\div \boldsymbol{\div }\boldsymbol{\tau}\|_{-1}^2
$, and
\[
\boldsymbol{H}(\div \boldsymbol{\div },\Omega; \mathbb{S}):=\{\boldsymbol{\tau}\in \boldsymbol{L}^{2}(\Omega; \mathbb{S}): \div \mathbf{div}\boldsymbol{\tau}\in L^{2}(\Omega)\}
\]
with squared norm $
\|\boldsymbol{\tau}\|_{\boldsymbol{H}(\div \boldsymbol{\div })}^2:=\|\boldsymbol{\tau}\|_{0}^2+\|\div \boldsymbol{\div }\boldsymbol{\tau}\|_{0}^2
$.
We recall the Hellan-Herrmann-Johnson (HHJ) exact sequence (cf.~\cite[Lemma~2.2]{ChenHuHuang2015a}) and give a simple proof here.
\begin{lemma}
We have the following exact sequence
\begin{equation}\label{eq:hhjcomplex2d}
\boldsymbol{H}^{1}(\Omega; \mathbb{R}^2)\;\autorightarrow{$\sym\bs\curl$}{} \; \boldsymbol{H}^{-1}(\div \boldsymbol{\div },\Omega; \mathbb{S})\; \autorightarrow{$\div \boldsymbol{\div }$}{} \;H^{-1}(\Omega) \; \autorightarrow{}{}\; 0.
\end{equation}
\end{lemma}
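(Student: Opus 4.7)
The plan is to reduce the claim to the already-proved exact sequence \eqref{eq:hhjcomplex2d-0427} together with the trivial inclusion $H^{-1}(\Omega)\subset H^{-2}(\Omega)$. Exactness at $\boldsymbol{H}^{-1}(\div\bs\div,\Omega;\mathbb{S})$ and surjectivity of the last map are the two things to verify.

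First I would establish the kernel identity $\ker(\div\bs\div)\cap \boldsymbol{H}^{-1}(\div\bs\div,\Omega;\mathbb{S})=\sym\bs\curl\,\boldsymbol{H}^{1}(\Omega;\mathbb{R}^2)$. The inclusion ``$\supset$'' is immediate: for any $\bs v\in \boldsymbol{H}^1(\Omega;\mathbb{R}^2)$, the symmetric tensor $\sym\bs\curl\,\bs v$ lies in $\boldsymbol{L}^2(\Omega;\mathbb{S})$ and satisfies $\div\bs\div(\sym\bs\curl\,\bs v)=0$ by \eqref{eq:hhjcomplex2d-0427}, so it trivially belongs to $\boldsymbol{H}^{-1}(\div\bs\div,\Omega;\mathbb{S})$. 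For ``$\subset$'', note that by definition $\boldsymbol{H}^{-1}(\div\bs\div,\Omega;\mathbb{S})\subset \boldsymbol{L}^{2}(\Omega;\mathbb{S})$; hence an element $\bs\tau$ of the kernel is an $L^2$-symmetric tensor annihilated by $\div\bs\div$ in $H^{-1}(\Omega)$, and therefore also in $H^{-2}(\Omega)$. The kernel identity in \eqref{eq:hhjcomplex2d-0427} then produces the desired $\bs v\in \boldsymbol H^1(\Omega;\mathbb{R}^2)$ with $\bs\tau=\sym\bs\curl\,\bs v$.

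Next I would prove surjectivity of $\div\bs\div:\boldsymbol{H}^{-1}(\div\bs\div,\Omega;\mathbb{S})\to H^{-1}(\Omega)$. Given $f\in H^{-1}(\Omega)$, view it as an element of $H^{-2}(\Omega)$. By the surjectivity half of \eqref{eq:hhjcomplex2d-0427}, there exists $\bs\tau\in\boldsymbol{L}^2(\Omega;\mathbb{S})$ with $\div\bs\div\,\bs\tau=f$ in $H^{-2}(\Omega)$. Because the right-hand side $f$ actually lies in $H^{-1}(\Omega)$, this $\bs\tau$ automatically satisfies the membership criterion for $\boldsymbol{H}^{-1}(\div\bs\div,\Omega;\mathbb{S})$, proving surjectivity.

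Given these two steps, the sequence is exact by definition. There is essentially no hard part here: the entire argument is a straightforward refinement of the previously established complex \eqref{eq:hhjcomplex2d-0427}, exploiting that $H^{-1}(\Omega)$ is merely a smaller target space sitting inside $H^{-2}(\Omega)$. The only place one needs to be slightly careful is the observation that a preimage under $\div\bs\div$ produced by \eqref{eq:hhjcomplex2d-0427} lies in the correct graph space as soon as the datum does — which follows directly from the definition of $\boldsymbol{H}^{-1}(\div\bs\div,\Omega;\mathbb{S})$.
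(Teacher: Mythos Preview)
Your proposal is correct and follows essentially the same approach as the paper. The paper's proof is even terser: it declares the kernel identity immediate (writing ``We only need to prove $\div\bs\div\boldsymbol{H}^{-1}(\div\boldsymbol{\div},\Omega;\mathbb{S})=H^{-1}(\Omega)$'') and then gives exactly your surjectivity argument via the inclusion $H^{-1}(\Omega)\subset H^{-2}(\Omega)$ and the previously established sequence \eqref{eq:hhjcomplex2d-0427}; your additional paragraph spelling out both inclusions for the kernel is a welcome elaboration but not a different idea.
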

\begin{proof}
We only need to prove $\div\bs \div\boldsymbol{H}^{-1}(\div \boldsymbol{\div},\Omega; \mathbb{S})=H^{-1}(\Omega)$.
By the definition of $\boldsymbol{H}^{-1}(\div \boldsymbol{\div },\Omega; \mathbb{S})$, apparently $\div\bs \div\boldsymbol{H}^{-1}(\div \boldsymbol{\div },\Omega; \mathbb{S})\subset H^{-1}(\Omega)$. On the other side, for each $v\in H^{-1}(\Omega)\subset H^{-2}(\Omega)$, by the exact sequence \eqref{eq:hhjcomplex2d-0427}, there exists $\bs\tau\in\boldsymbol{L}^{2}(\Omega; \mathbb{S})$ such that $\div \boldsymbol{\div}\bs\tau=v$.
Note that $v\in H^{-1}(\Omega)$, thus $\bs\tau\in\boldsymbol{H}^{-1}(\div \boldsymbol{\div },\Omega; \mathbb{S})$, which indicates $H^{-1}(\Omega)\subset\div\bs \div\boldsymbol{H}^{-1}(\div \boldsymbol{\div},\Omega; \mathbb{S})$.
\end{proof}

Given a scalar function $v$, we can embed it into the symmetric tensor space as $\boldsymbol\pi (v) = v \boldsymbol I_{2\times 2}$. Since $\Delta v = \div \boldsymbol{\div } \boldsymbol\pi (v)$, we have the commutative diagram in two dimensions 
\begin{equation}\label{cd:hhj}
\begin{array}{c}
\xymatrix{
\boldsymbol{H}^{1}(\Omega; \mathbb{R}^2) \ar[r]^-{\sym\bs\curl}
                & \boldsymbol{H}^{-1}(\div \boldsymbol{\div },\Omega; \mathbb{S}) \ar[r]^-{\div \boldsymbol{\div }} & H^{-1}(\Omega) \ar[r]^-{} & 0\\
&\boldsymbol H_0^1(\Omega; \mathbb S) \ar[u]^{I} & H_0^1(\Omega)\ar[u]_{\Delta}\ar[l]_{\boldsymbol\pi}}
\end{array}.
\end{equation}
According to Theorem~\ref{thm:abstracthelmholtzdecomp},
we recover the recent Hemholtz decomposition in~\cite[Theorem 3.1]{KrendlRafetsederZulehner2016}
\begin{equation}\label{Hdivdiv_1symdec}
\boldsymbol{H}^{-1}(\div \boldsymbol{\div },\Omega; \mathbb{S}) = \sym\bs\curl \boldsymbol{H}^{1}(\Omega; \mathbb{R}^2) \oplus \boldsymbol\pi H_0^1(\Omega).
\end{equation}
The index $-1$ can be further decreased to $-2$; see Lemma \ref{lm:H-2elasticity} in the next subsection.
The generalized Hemholtz decomposition~\eqref{Hdivdiv_1symdec} is not orthogonal in $\boldsymbol H^{-1}(\Omega; \mathbb S)$ or $\boldsymbol{H}^{-1}(\div \boldsymbol{\div },\Omega; \mathbb{S})$ inner product. Indeed $\boldsymbol\pi\neq(\div \boldsymbol{\div })'$,
thus \eqref{Hdivdiv_1symdec} is derived from Theorem~\ref{thm:abstracthelmholtzdecomp} for Banach spaces rather than Corollary~\ref{cor:orthdec} for Hilbert spaces.

Smoothness of the symmetric tensor can be further increased to
$$
\begin{array}{c}
\xymatrix{
\boldsymbol{H}^{1}(\Omega; \mathbb{R}^2) \ar[r]^-{\sym\bs\curl}
                & \boldsymbol{H}(\div \boldsymbol{\div },\Omega; \mathbb{S}) \ar[r]^-{\div \boldsymbol{\div }} & L^{2}(\Omega) \ar[r]^-{} & 0\\
&\bs H_0^1(\Omega; \mathbb S) \ar[u]^{I} & L^2(\Omega)\ar[u]_{I}\ar[l]_-{\boldsymbol\pi\Delta^{-1}}}
\end{array},
$$
which leads to the generalized Helmholtz decomposition
$$
\boldsymbol{H}(\div \boldsymbol{\div },\Omega; \mathbb{S}) = \sym\bs\curl \boldsymbol{H}^{1}(\Omega; \mathbb{R}^2) \oplus \boldsymbol\pi \Delta^{-1}L^2(\Omega).
$$

\subsection{Helmholtz decomposition of symmetric tensors: linear elasticity}\label{subsec:rot2HD}

Recall that the symmetric gradient $\bs \varepsilon(\bs u) = (\nabla u + (\nabla u)^{\intercal})/2$.
Let
\[
\boldsymbol{H}^{-2}(\rot \mathbf{rot},\Omega; \mathbb{S}):=\{\boldsymbol{\tau}\in \boldsymbol{H}^{-1}(\Omega; \mathbb{S}): \rot \mathbf{rot}\boldsymbol{\tau}\in H^{-2}(\Omega)\}
\]
with squared norm
$
\|\boldsymbol{\tau}\|_{\boldsymbol{H}^{-2}(\rot \mathbf{rot})}^2:=\|\boldsymbol{\tau}\|_{-1}^2+\|\rot \mathbf{rot}\boldsymbol{\tau}\|_{-2}^2.
$

\begin{lemma}\label{lm:H-2elasticity}
The complex
\begin{equation}\label{complex:2-7}
\boldsymbol{L}^{2}(\Omega; \mathbb{R}^2)\;\autorightarrow{$\boldsymbol \varepsilon$}{} \; \boldsymbol{H}^{-2}(\rot \mathbf{rot},\Omega; \mathbb{S})\; \autorightarrow{$\rot \boldsymbol{\rot }$}{} \;H^{-2}(\Omega)\;\autorightarrow{}{}\;0
\end{equation}
is exact.
\end{lemma}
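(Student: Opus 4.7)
The plan is to establish three facts about the complex in \eqref{complex:2-7}: the chain identity $\rot\bs\rot\circ\bs\varepsilon=0$, the kernel identification $\ker(\rot\bs\rot)\cap\bs H^{-2}(\rot\bs\rot,\Omega;\mathbb S)=\bs\varepsilon\boldsymbol L^2(\Omega;\mathbb R^2)$, and the surjectivity $\rot\bs\rot\,\bs H^{-2}(\rot\bs\rot,\Omega;\mathbb S)=H^{-2}(\Omega)$. The first is a direct distributional computation: acting row-wise, $\bs\rot\bs\varepsilon\bs v$ is a vector whose components are mixed partials of the entries of $\bs v$ that cancel in scalar $\rot$.

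For surjectivity, I would follow the template of the proof of the previous HHJ lemma. A short direct computation shows that $\rot\bs\rot$ annihilates every skew-symmetric matrix field in two dimensions, since such a field is parameterized by a single scalar $a$ and $\rot\bs\rot$ evaluates to a difference of mixed partials of $a$. Consequently $\rot\bs\rot\boldsymbol L^2(\Omega;\mathbb S)=\rot\bs\rot\boldsymbol L^2(\Omega;\mathbb M)$. The shifted de Rham complex \eqref{eq:deRhamcomplex2d-H-rot} at $s=-1$ applied row-wise yields $\bs\rot\boldsymbol L^2(\Omega;\mathbb M)=\bs H^{-1}(\Omega;\mathbb R^2)$, and the same complex at $s=-2$ yields $\rot\bs H^{-1}(\Omega;\mathbb R^2)=H^{-2}(\Omega)$. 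Combining these gives $\rot\bs\rot\boldsymbol L^2(\Omega;\mathbb S)=H^{-2}(\Omega)$. Since $\boldsymbol L^2(\Omega;\mathbb S)\subset\bs H^{-2}(\rot\bs\rot,\Omega;\mathbb S)$, the desired surjectivity follows.

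For the kernel identification, note that $\bs H^{-2}(\rot\bs\rot,\Omega;\mathbb S)\subset\bs H^{-1}(\Omega;\mathbb S)$, so the task reduces to proving $\ker(\rot\bs\rot)\cap\bs H^{-1}(\Omega;\mathbb S)=\bs\varepsilon\boldsymbol L^2(\Omega;\mathbb R^2)$, i.e.\ exactness at the middle spot of the 2D linear-elasticity complex one Sobolev level below its familiar $\boldsymbol L^2(\Omega;\mathbb S)$ incarnation. The cleanest route is to exploit the planar rotation symmetry: the $90^\circ$ conjugation $\widetilde{\bs\tau}:=\bs J^\intercal\bs\tau\bs J$ with $\bs J=\bigl(\begin{smallmatrix}0&-1\\1&0\end{smallmatrix}\bigr)$ preserves $\boldsymbol L^2(\Omega;\mathbb S)$ and satisfies the pointwise identity $\rot\bs\rot\bs\tau=\div\bs\div\widetilde{\bs\tau}$, and it intertwines $\bs\varepsilon$ (up to a reparametrization of the vector potential) with $\sym\bs\curl$. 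This transfers the claim to the $\bs H^{-1}$-level middle-exactness of the HHJ-type sequence \eqref{eq:hhjcomplex2d}, itself a one-index shift of \eqref{eq:hhjcomplex2d-0427} already used in Section~\ref{subsec:div2HD}.

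The main obstacle is the kernel identification at this shifted regularity: because $\bs H^{-2}(\rot\bs\rot,\Omega;\mathbb S)$ extends strictly past $\boldsymbol L^2(\Omega;\mathbb S)$, one cannot simply quote the standard $L^2$-level elasticity sequence. The rotation argument circumvents a direct, technical shift of the elasticity complex to negative Sobolev index by transferring the whole question to the $\div\bs\div$ setting that has already been handled.
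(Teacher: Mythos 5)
Your chain-identity and surjectivity arguments are fine, and the surjectivity step is essentially the paper's: rotate the exact sequence \eqref{eq:hhjcomplex2d-0427} to get $\rot\boldsymbol{\rot}\boldsymbol{L}^{2}(\Omega; \mathbb{S})=H^{-2}(\Omega)$ and use $\boldsymbol{L}^{2}(\Omega; \mathbb{S})\subset \boldsymbol{H}^{-2}(\rot \mathbf{rot},\Omega; \mathbb{S})$. The gap is in the kernel identification. Your rotation conjugation $\bs\tau\mapsto \bs J^\intercal\bs\tau\bs J$ does intertwine $\rot\mathbf{rot}$ with $\div\boldsymbol{\div}$ and $\bs\varepsilon$ with $\sym\bs\curl$ (up to reparametrizing the potential), but it only relabels the problem; it does not land on a statement that has "already been handled." The middle exactness you would need after rotating is: every $\bs\tau\in\boldsymbol{H}^{-1}(\Omega;\mathbb S)$ with $\div\boldsymbol{\div}\bs\tau=0$ equals $\sym\bs\curl\bs w$ for some $\bs w\in\boldsymbol{L}^2(\Omega;\mathbb R^2)$. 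The sequences \eqref{eq:hhjcomplex2d-0427} and \eqref{eq:hhjcomplex2d} establish this only for $\bs\tau\in\boldsymbol{L}^{2}(\Omega; \mathbb{S})$ with potential $\bs w\in\boldsymbol{H}^{1}(\Omega; \mathbb{R}^2)$ — note that $\boldsymbol{H}^{-1}(\div \boldsymbol{\div },\Omega; \mathbb{S})$ is by definition a subspace of $\boldsymbol{L}^{2}(\Omega; \mathbb{S})$; the superscript $-1$ refers to the regularity of $\div\boldsymbol{\div}\bs\tau$, not of $\bs\tau$. So the target of your transfer sits one full Sobolev index below anything proved in the $\div\boldsymbol{\div}$ setting, and the "main obstacle" you name is not circumvented.

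The missing idea is a direct two-step potential construction using the shifted de Rham complexes, which are exact for all relevant negative indices. Given $\bs\tau\in\boldsymbol{H}^{-1}(\Omega;\mathbb S)$ with $\rot\mathbf{rot}\bs\tau=0$: since $\mathbf{rot}\bs\tau\in\boldsymbol{H}^{-2}(\Omega;\mathbb R^2)$ is annihilated by $\rot$, the sequence \eqref{eq:deRhamcomplex2d-H-rot} with $s=-3$ gives $v\in H^{-1}(\Omega)$ with $\mathbf{rot}\bs\tau=\nabla v$. Because $\nabla v=\mathbf{rot}\begin{pmatrix}0 & v \\ -v & 0\end{pmatrix}$, the difference $\bs\tau-\begin{pmatrix}0 & v \\ -v & 0\end{pmatrix}$ is $\mathbf{rot}$-free in $\boldsymbol{H}^{-1}(\Omega;\mathbb M)$, so applying \eqref{eq:deRhamcomplex2d-H-rot} with $s=-2$ row by row yields $\bs\phi\in\boldsymbol{L}^{2}(\Omega; \mathbb{R}^2)$ with $\bs\tau=\begin{pmatrix}0 & v \\ -v & 0\end{pmatrix}+\bs\nabla\bs\phi$; symmetry of $\bs\tau$ then forces $\bs\tau=\bs\varepsilon(\bs\phi)$. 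Either carry out this argument directly for $\rot\mathbf{rot}$ (as the paper does), or, if you insist on rotating first, you must supply the analogous two-step argument for $\div\boldsymbol{\div}$ at the lower regularity — it is not available off the shelf.
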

\begin{proof}
It is trivial that \eqref{complex:2-7} is a complex, i.e., $\rot \bs \rot \circ \bs \varepsilon = 0$. Next we show the exactness.
For any $\bs\tau\in \ker(\rot\boldsymbol{\rot})$, by the exact sequence \eqref{eq:deRhamcomplex2d-H-rot} with $s=-3$, there exists $v\in H^{-1}(\Omega)$ satisfying $\boldsymbol{\rot}\bs\tau=\nabla v$. Since $\nabla v=\boldsymbol{\rot}\begin{pmatrix}0 & v \\ -v & 0\end{pmatrix}$, we have
\[
\boldsymbol{\rot}\left(\bs\tau-\begin{pmatrix}0 & v \\ -v & 0\end{pmatrix}\right)=\bs0.
\]
Thus $\bs\tau=\begin{pmatrix}0 & v \\ -v & 0\end{pmatrix}+\bs\nabla\bs \phi$ with $\bs \phi\in\boldsymbol{L}^{2}(\Omega; \mathbb{R}^2)$, which together with the fact that $\bs\tau$ is symmetric means $\bs\tau=\boldsymbol \varepsilon(\bs\phi)$. Hence $\ker(\rot\boldsymbol{\rot})\subset\img(\bs\varepsilon)$.

By the rotation of the exact sequence~\eqref{eq:hhjcomplex2d-0427},
$
\rot\boldsymbol{\rot}\boldsymbol{L}^{2}(\Omega; \mathbb{S})=H^{-2}(\Omega).
$
Thus
$
H^{-2}(\Omega)\subset \rot\boldsymbol{\rot}\boldsymbol{H}^{-2}(\rot \mathbf{rot},\Omega; \mathbb{S}),
$
which implies $\img(\rot\bs\rot)=H^{-2}(\Omega)$.
\end{proof}

With the complex \eqref{complex:2-7} and the fact that $\Delta^2 = \rot \mathbf{rot}\boldsymbol I\bs\curl\curl $,
we have the commutative diagram
\begin{equation}\label{cd:Rotrot_2symdec}
\begin{array}{c}
\xymatrix{
\boldsymbol L^{2}(\Omega; \mathbb R^2) \ar[r]^-{\boldsymbol \varepsilon}
                &\boldsymbol H^{-2}(\rot \mathbf{rot}, \Omega;\mathbb S) \ar[r]^-{\rot \mathbf{rot}} & H^{-2}(\Omega) \ar[r]^-{} & 0\\
&\boldsymbol H_0(\div , \Omega;\mathbb S) \ar[u]^{\boldsymbol I} & H_0^2(\Omega)\ar[u]_{\Delta^2}\ar[l]_-{\bs\curl\curl }}
\end{array}
\end{equation}
which leads to a Helmholtz decomposition
\begin{equation}\label{Rotrot_2symdec}
\boldsymbol H^{-2}(\rot \mathbf{rot}, \Omega;\mathbb S) = \boldsymbol \varepsilon \boldsymbol L^2(\Omega; \mathbb R^2) \oplus \bs\curl\curl  H_0^2(\Omega).
\end{equation}

Applying Corollary~\ref{cor:dualcharac} to commutative diagram \eqref{cd:Rotrot_2symdec}, we have
\[
(\boldsymbol H_0(\div , \Omega;\mathbb S))'=\boldsymbol H^{-2}(\rot \mathbf{rot}, \Omega;\mathbb S).
\]
Therefore the decomposition \eqref{Rotrot_2symdec} is also orthogonal in $\boldsymbol H_0(\div , \Omega;\mathbb S))'$ inner product.

\subsection{Helmholtz decomposition of $\boldsymbol{H}(\curl\curl, (K_0^c)')$ space}\label{subsec:curl2KdualHD}
Let $$\boldsymbol{L}_0^2(\Omega; \mathbb M):=\{\bs\tau\in\boldsymbol{L}^2(\Omega; \mathbb M): (\tr\bs\tau, 1)=0\}.$$
Introduce two Hilbert spaces
\[
\boldsymbol{H}^0(\bs\div, \dev\sym):=\{\bs\tau\in\boldsymbol{L}_0^2(\Omega; \mathbb M): \div\bs\tau=\bs 0, \dev\sym\bs\tau=\bs 0\}
\]
with norm $\|\cdot\|_0$, and
\[
\boldsymbol{H}(\curl\curl, (K_0^c)'):=\{\bs\phi\in\boldsymbol L^2(\Omega; \mathbb R^3): \curl\curl\bs\phi\in(K_0^c)'\}
\]
with squared norm
$
\|\boldsymbol{\tau}\|_{\boldsymbol{H}(\curl\curl, (K_0^c)')}^2:=\|\boldsymbol{\tau}\|_{0}^2+\|\curl\curl\boldsymbol{\tau}\|_{(K_0^c)'}^2
$
.
\begin{lemma}\label{lem:h0divdevsym}
The complex
\[
\boldsymbol{H}^0(\bs\div, \dev\sym)\;\autorightarrow{$\spn^{-1}\skw$}{} \; \boldsymbol{H}(\curl\curl, (K_0^c)')\; \autorightarrow{$\curl\curl$}{} \;(K_0^c)'\;\autorightarrow{}{}\;0
\]
is exact.
\end{lemma}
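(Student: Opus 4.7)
The plan is to verify the three ingredients of exactness separately: (a) the composition $\curl\curl\circ\spn^{-1}\skw$ vanishes, (b) $\curl\curl$ is surjective onto $(K_0^c)'$, and (c) the middle kernel-image identity. The algebraic backbone is the identity
\[
\bs\div(\spn\bs v)=-\curl\bs v\qquad\forall\,\bs v\in\bs L^2(\Omega;\mathbb R^3),
\]
which follows by a direct componentwise computation from the matrix form of $\spn$.

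For the complex property, given $\bs\tau\in\boldsymbol{H}^0(\bs\div,\dev\sym)$, the condition $\dev\sym\bs\tau=0$ forces $\sym\bs\tau=c\bs I$ with $c:=\tfrac13\tr\bs\tau\in L^2(\Omega)$. Writing $\bs\tau=c\bs I+\spn\bs\phi$ with $\bs\phi:=\spn^{-1}\skw\bs\tau\in\bs L^2$, the vanishing of $\bs\div\bs\tau$ reads $\nabla c=\curl\bs\phi$ in the distributional sense, so $\curl\curl\bs\phi=\curl\nabla c=0$ as a distribution and hence also in $(K_0^c)'$; this simultaneously verifies that $\spn^{-1}\skw$ maps into the middle space. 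For surjectivity, I would reuse the observation from the proof of Lemma~\ref{lem:temp20180205} that $\curl\curl:K_0^c\to(K_0^c)'$ is an isomorphism via the Poincar\'e inequality on $K_0^c$. Since $K_0^c\subset\bs L^2$ and $\curl\curl\bs u\in\bs H^{-1}(\div,\Omega)=(K_0^c)'$ for $\bs u\in K_0^c$, every $\bs f\in(K_0^c)'$ admits a preimage in $K_0^c\subset\boldsymbol{H}(\curl\curl,(K_0^c)')$.

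The main step is to establish $\ker(\curl\curl)\cap\boldsymbol{H}(\curl\curl,(K_0^c)')\subset\img(\spn^{-1}\skw)$. Given such a $\bs\phi$, we have $\curl\bs\phi\in\bs H^{-1}(\Omega;\mathbb R^3)$ and $\curl(\curl\bs\phi)=0$ as a distribution. The de Rham complex \eqref{eq:deRhamcomplex3d-H} at $s=-3$ identifies $\ker(\curl)\cap\bs H^{-1}(\Omega;\mathbb R^3)=\nabla L_0^2(\Omega)$, so $\curl\bs\phi=\nabla q$ for a unique $q\in L_0^2(\Omega)$. Motivated by the complex computation above, I set
\[
\bs\tau:=q\bs I+\spn\bs\phi\in\bs L_0^2(\Omega;\mathbb M).
\]
Then $(\tr\bs\tau,1)=3\int_\Omega q=0$; $\sym\bs\tau=q\bs I$ because $\spn\bs\phi$ is skew, so $\dev\sym\bs\tau=0$; and $\bs\div\bs\tau=\nabla q-\curl\bs\phi=0$. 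Hence $\bs\tau\in\boldsymbol{H}^0(\bs\div,\dev\sym)$ and manifestly $\spn^{-1}\skw\bs\tau=\bs\phi$. I expect the only delicate point to be the distributional identification of $\ker(\curl)\cap\bs H^{-1}$, which requires the $s=-3$ instance of the negative-order de Rham complex and the simple-connectedness assumption on $\Omega$; the rest is pure algebra.
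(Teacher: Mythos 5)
Your proposal is correct and follows essentially the same route as the paper's proof: the same algebraic identity $\bs\div(\spn\bs v)=-\curl\bs v$, the same construction $\bs\tau=q\bs I+\spn\bs\phi$ with $\curl\bs\phi=\nabla q$ obtained from the $s=-3$ de Rham sequence, and the same appeal to the solvability of the Maxwell system on $K_0^c$ for surjectivity. The only difference is that you spell out the surjectivity and the $L_0^2$-trace normalization slightly more explicitly than the paper does.
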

\begin{proof}
By definition $\img(\curl\curl)=(K_0^c)'$.
Next we show that $\ker(\curl\curl)=\img(\spn^{-1}\skw)$.

Taking any $\bs\tau\in\boldsymbol{H}^0(\bs\div, \dev\sym)$, set $\bs\psi=\spn^{-1}\skw\bs\tau$. Then we have
$\spn\bs\psi=\skw\bs\tau$. Noting that $\dev\sym\bs\tau=\bs 0$, i.e. $\sym\bs\tau=\frac{1}{3}(\tr\bs\tau)\bs I$, it holds
\[
\bs\tau=\sym\bs\tau+\skw\bs\tau=\frac{1}{3}(\tr\bs\tau)\bs I+\spn\bs\psi.
\]
Hence by $\div\bs\tau=\bs 0$ we obtain $\frac{1}{3}\nabla(\tr\bs\tau)=-\bs\div\spn\bs\psi=\curl\bs\psi$, which implies $\curl\curl\bs\psi=\bs0$.
Thus we have $\img(\spn^{-1}\skw)\subset\ker(\curl\curl)$.

On the other hand, take any $\bs\psi\in\ker(\curl\curl)$. Then there exists $w\in L_0^2(\Omega)$ such that $\curl\bs\psi=\nabla w$.
Let
\begin{equation}\label{eq:temp20180118-1}
\bs\tau=\spn\bs\psi+w\bs I.
\end{equation}
Obviously $\bs\div\bs\tau=0$,  $w\bs I=\sym\bs\tau$ and $w=\frac{1}{3}\tr(\sym\bs\tau)$.
Thus $\dev\sym\bs\tau=\bs 0$, i.e. $\bs\tau\in\boldsymbol{H}^0(\bs\div, \dev\sym)$.
By applying $\skw$ to \eqref{eq:temp20180118-1}, we have $\spn\bs\psi=\skw\bs\tau$, which indicates $\bs\psi=\spn^{-1}\skw\bs\tau\in \img(\spn^{-1}\skw)$.
\end{proof}

According to the proof of Lemma~\ref{lem:h0divdevsym}, we have
\begin{equation}\label{eq:H0divdevsym}
\boldsymbol{H}^0(\bs\div, \dev\sym)=\{\bs\tau=v\bs I+\spn\bs\phi: v\in L_0^2(\Omega), \bs\phi\in\boldsymbol{L}^2(\Omega; \mathbb R^3), \div\bs\tau=0\}.
\end{equation}
Therefore we have the commutative diagram
\begin{equation}\label{cd:curlcurldual}
\begin{array}{c}
\xymatrix@R-=1.0pc@C=1.28cm{
\boldsymbol{H}^0(\bs\div, \dev\sym) \ar[r]^-{\spn^{-1}\skw} & \boldsymbol{H}(\curl\curl, (K_0^c)') \ar[r]^-{\curl\curl}
                & (K_0^c)' \ar[r]^-{} & 0 \\
&K_0^c  \ar[u]^{I} & K_0^c\ar[u]_{\curl\curl}\ar[l]_{I}}
\end{array}.
\end{equation}
Applying Theorem~\ref{thm:abstracthelmholtzdecomp}, 
we get the stable Helmholtz decomposition
\begin{equation}\label{complex:curlcurldual}
\boldsymbol{H}(\curl\curl, (K_0^c)')=\spn^{-1}\skw\boldsymbol{H}^0(\bs\div, \dev\sym)\oplus K_0^c.
\end{equation}
Again this Helmholtz decomposition is not orthogonal in $\boldsymbol{H}(\curl\curl, (K_0^c)')$ or $(K_0^c)'$ inner product.

\subsection{Helmholtz decomposition of $\boldsymbol{H}^{-2}(\div^3, \Omega)$ space}\label{subsec:div3HD}
Denote
\[
\boldsymbol{H}^{-2}(\div^3, \Omega):=\{\bs\tau\in\boldsymbol{L}^2(\Omega; \mathbb S(3)): \div\bs\div\bs\div\bs\tau\in H^{-2}(\Omega)\}
\]
with squared norm
$
\|\boldsymbol{\tau}\|_{\boldsymbol{H}^{-2}(\div^3)}^2:=\|\boldsymbol{\tau}\|_{0}^2+\|\div\bs\div\bs\div\bs\tau\|_{-2}^2
$.
Define $\bs\Xi: H_0^1(\Omega; \mathbb R^2)\to \boldsymbol{H}^{-2}(\div^3, \Omega)$ as follows: for any $\bs\psi=(\psi_1, \psi_2)^{\intercal}\in H_0^1(\Omega; \mathbb R^2)$,
$\bs\Xi\bs\psi:=(\bs\tau_{ijk})_{2\times2\times2}$ with
\[
\bs\tau_{111}=\psi_1, \; \bs\tau_{222}=\psi_2, \; \bs\tau_{112}=\bs\tau_{121}=\bs\tau_{211}=\frac{1}{3}\psi_2, \;\bs\tau_{122}=\bs\tau_{212}=\bs\tau_{221}=\frac{1}{3}\psi_1.
\]

\begin{lemma}
The complex
\[
\boldsymbol{H}^1(\Omega; \mathbb S)\;\autorightarrow{$\sym\bs\curl$}{} \; \boldsymbol{H}^{-2}(\div^3, \Omega)\; \autorightarrow{$\div\bs\div\bs\div$}{} \;H^{-2}(\Omega)\;\autorightarrow{}{}\;0
\]
is exact.
\end{lemma}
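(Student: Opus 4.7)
My plan is to prove exactness in three steps, following the template of Subsections~\ref{subsec:div2HD}--\ref{subsec:curl2KdualHD}: first, verifying that the sequence is a complex; second, establishing surjectivity of $\div\bs\div\bs\div$ onto $H^{-2}(\Omega)$; and third, proving the inclusion $\ker(\div\bs\div\bs\div)\subset\img(\sym\bs\curl)$, which I expect to be the main obstacle.

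For the complex property, I will exploit that $\partial_i\partial_j\partial_k$ is totally symmetric in $(i,j,k)$, so the symmetrization $\sym$ can be dropped in the full contraction $\partial_i\partial_j\partial_k(\sym\bs\curl\bs\sigma)_{ijk}$, reducing the check to
\[
\partial_i\partial_j\partial_k(\bs\curl\bs\sigma)_{ijk}=\partial_i\partial_j\bigl(\partial_1\partial_2\sigma_{ij}-\partial_2\partial_1\sigma_{ij}\bigr)=0,
\]
which is the identity $\div\curl=0$ applied componentwise to each scalar entry $\sigma_{ij}$.

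For surjectivity I will mimic the HHJ construction in Subsection~\ref{subsec:div2HD}, using $\bs\Xi\nabla$ composed with the biharmonic solution operator as an explicit right inverse. Expanding $\bs\Xi\nabla\phi$ in components and applying $\partial_i\partial_j\partial_k$ componentwise gives
\[
\div\bs\div\bs\div(\bs\Xi\nabla\phi)=\partial_1^3\partial_1\phi+\partial_1^2\partial_2\partial_2\phi+\partial_1\partial_2^2\partial_1\phi+\partial_2^3\partial_2\phi=\Delta^2\phi.
\]
Hence, given $v\in H^{-2}(\Omega)$, the classical biharmonic boundary value problem yields $\phi\in H_0^2(\Omega)$ with $\Delta^2\phi=v$, and then $\bs\tau:=\bs\Xi\nabla\phi\in\boldsymbol L^2(\Omega;\mathbb S(3))\cap\boldsymbol H^{-2}(\div^3,\Omega)$ satisfies $\div\bs\div\bs\div\bs\tau=v$.

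The hard step is the middle exactness. Given $\bs\tau\in\boldsymbol H^{-2}(\div^3,\Omega)\subset\boldsymbol L^2(\Omega;\mathbb S(3))$ with $\div\bs\div\bs\div\bs\tau=0$, I need to construct $\bs\sigma\in\boldsymbol H^1(\Omega;\mathbb S)$ satisfying $\sym\bs\curl\bs\sigma=\bs\tau$. My plan is to reduce this to the analogous $L^2$-level sequence
\[
\boldsymbol H^1(\Omega;\mathbb S)\;\autorightarrow{$\sym\bs\curl$}{}\;\boldsymbol L^2(\Omega;\mathbb S(3))\;\autorightarrow{$\div^3$}{}\;H^{-3}(\Omega)\;\autorightarrow{}{}\;0,
\]
the higher-order symmetric-tensor analog of \eqref{eq:hhjcomplex2d-0427}, whose exactness can be obtained from BGG-type resolutions for $\mathbb S(3)$-valued tensors (see \cite{Schedensack2016}). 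Concretely, writing $p=\sigma_{11}$, $q=\sigma_{12}$, $r=\sigma_{22}$ and $(a,b,c,d)=(\bs\tau_{111},\bs\tau_{112},\bs\tau_{122},\bs\tau_{222})$, the equation $\sym\bs\curl\bs\sigma=\bs\tau$ unfolds into the first-order system
\[
\partial_2 p=a,\quad -\partial_1 p+2\partial_2 q=3b,\quad -2\partial_1 q+\partial_2 r=3c,\quad -\partial_1 r=d,
\]
and by direct elimination the integrability condition for $(p,q,r)\in H^1(\Omega)$ on a simply connected $\Omega$ turns out to be precisely $\partial_1^3 a+3\partial_1^2\partial_2 b+3\partial_1\partial_2^2 c+\partial_2^3 d=\div\bs\div\bs\div\bs\tau=0$, whence a standard Poincar\'e-type integration yields $\bs\sigma$ with the required $H^1$ regularity.
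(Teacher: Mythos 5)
Your proposal is correct and takes essentially the same route as the paper: surjectivity of $\div\bs\div\bs\div$ via the identity $\Delta^2=\div\bs\div\bs\div\bs\Xi\nabla$ combined with the biharmonic solution operator on $H_0^2(\Omega)$, and the middle exactness $\ker(\div\bs\div\bs\div)=\img(\sym\bs\curl)$ by reduction to the $L^2$-level statement cited from \cite{Schedensack2016}. Just note that your supplementary ``direct elimination'' sketch is not on its own a substitute for that citation, since producing $\bs\sigma\in\boldsymbol{H}^1(\Omega;\mathbb S)$ from merely $L^2$ data $\bs\tau$ requires more than a path-integration Poincar\'e argument (the gradient of each unknown is coupled to derivatives of the others), so the appeal to \cite{Schedensack2016} should be kept as the actual justification.
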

\begin{proof}
It is apparent that $\Delta^2=\div\bs\div\bs\div\bs\Xi\nabla$, thus $\img(\div\bs\div\bs\div)=H^{-2}(\Omega)$.
We refer to \cite[Theorem 4.1 and Theorem 4.4]{Schedensack2016} for $\ker(\div\bs\div\bs\div)=\img(\sym\bs\curl)$.
\end{proof}

Through constructing the commutative diagram
\begin{equation}\label{cd:div3m2}
\begin{array}{c}
\xymatrix@R-=1.0pc@C=1.6cm{
\boldsymbol{H}^1(\Omega; \mathbb S) \ar[r]^-{\sym\bs\curl} & \boldsymbol{H}^{-2}(\div^3, \Omega) \ar[r]^-{\div\bs\div\bs\div}
                & H^{-2}(\Omega) \ar[r]^-{} & 0 \\
&H_0^1(\Omega; \mathbb R^2)  \ar[u]^{\bs\Xi} & H_0^{2}(\Omega)\ar[u]_{\Delta^2}\ar[l]_{\nabla}}
\end{array},
\end{equation}
we get the stable Helmholtz decomposition from Theorem~\ref{thm:abstracthelmholtzdecomp}
\begin{equation}\label{complex:div3m2}
\boldsymbol{H}^{-2}(\div^3, \Omega)=\sym\bs\curl\boldsymbol{H}^1(\Omega; \mathbb S)\oplus\bs\Xi\nabla H_0^{2}(\Omega).
\end{equation}

More differential complexes and Helmholtz decompositions can be obtained and some of them will be discussed along with the mixed formulations of elliptic systems.

\section{Abstract Mixed Formulation and Its Decomposition}
In this section we present an abstract mixed formulation and use a Helmholtz decomposition to decouple the saddle point system into several elliptic problems.

\subsection{Framework}
Assume we have the exact sequence
\begin{equation}\label{eq:shortcomplex0427}
P\; \autorightarrow{$\dd ^-$}{}\; \Sigma \; \autorightarrow{$\dd$}{} \; V' \; \autorightarrow{}{}\; 0,
\end{equation}
and
the commutative diagram
\begin{equation}\label{eq:shortdiagrammixedform}
\begin{array}{c}
\xymatrix@R-=1.6pc{
X \ar@{->}[r]^-{J_X} & X' & \\
P \ar[r]^-{\dd ^-} & \Sigma \ar[r]^-{\dd}\ar@{}[u]|{\bigcup}
                & V'\ar[r]^-{} & 0\\
&\widetilde{\Sigma}  \ar[u]^{\Pi_{\Sigma}} & V\ar[u]_{J_V}\ar[l]_{\Pi_V}}
\end{array},
\end{equation}
where the isomorphisms $J_X$ and $J_V$ are given by \eqref{eq:J_X}, i.e., the inverse of the Riesz representation operator, and the rest linear operators are all continuous but not necessarily isomorphic.

By Theorem~\ref{thm:abstracthelmholtzdecomp}, we have a stable Helmholtz decomposition
\begin{equation}\label{eq:abstracthelmholtzdecomp2}
\Sigma = \dd ^-P \oplus \Pi_{\Sigma} \Pi_V V.
\end{equation}
We emphasize that we do not need to know neither the short exact sequence at the bottom nor the space $\widetilde{\Sigma}$ in a very precise form (i.e. $\widetilde{\Sigma}$ can be reasonably enlarged to include the image space $\Pi_V V$).

\subsubsection{Two-term formulation}
An abstract mixed formulation based on the commutative diagram~\eqref{eq:shortdiagrammixedform} is: given $g\in\Sigma'$ and $f\in V'$,  find $(\sigma , u)\in \Sigma\times V$ such that
\begin{align}
(\sigma, \tau)_{X'}+ \langle \dd \tau, u\rangle & =\langle g, \tau\rangle \quad \forall~\tau\in\Sigma, \label{eq:mixed1}\\
\langle \dd \sigma, v\rangle & =\langle f, v\rangle  \quad \forall~v\in V.\label{eq:mixed2}
\end{align}

The bilinear form $(\cdot, \cdot)_{X'}$ is not necessary to be an inner product unless we intend to involve $X$ and $J_X$ in the mixed formulation.
We only require $(\cdot, \cdot)_{X'}$ is positive semidefinite and symmetric.

To show the well-posedness of the mixed formulation \eqref{eq:mixed1}-\eqref{eq:mixed2}, we assume the following norm equivalence
\begin{equation}\label{eq:normequivalenceSigma}
\|\tau\|_{\Sigma}^2\eqsim \|\tau\|_{X'}^2 + \|\dd \tau\|_{V'}^2 \quad \forall~\tau\in\Sigma.
\end{equation}
This norm equivalence is usually trivial, and it holds apparently for all the examples in this paper. Indeed the space $\Sigma$ is usually constructed so that \eqref{eq:normequivalenceSigma} holds.

\begin{theorem}\label{thm:mixedwellposed}
Assume the exact sequence \eqref{eq:shortcomplex0427}, the commutative diagram \eqref{eq:shortdiagrammixedform} and the norm equivalence \eqref{eq:normequivalenceSigma} hold, then
the mixed formulation \eqref{eq:mixed1}-\eqref{eq:mixed2} is uniquely solvable. Moreover, we have the stability result
\[
\|\sigma\|_{\Sigma} + \|u\|_V\lesssim \|g\|_{\Sigma'}+\|f\|_{V'}.
\]
\end{theorem}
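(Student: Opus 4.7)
My plan is to verify the three hypotheses of the classical Brezzi theorem for the saddle-point problem \eqref{eq:mixed1}-\eqref{eq:mixed2}: continuity of the two bilinear forms $a(\sigma,\tau):=(\sigma,\tau)_{X'}$ and $b(\tau,v):=\langle \dd\tau,v\rangle$, coercivity of $a$ on the kernel $K:=\ker(\dd)=\{\tau\in\Sigma:\dd\tau=0\}$, and the inf-sup condition for $b$ on $\Sigma\times V$. Once these are in place, Brezzi's theorem yields both existence/uniqueness and the stability estimate.

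Continuity is immediate from \eqref{eq:normequivalenceSigma}: one has $|a(\sigma,\tau)|\le \|\sigma\|_{X'}\|\tau\|_{X'}\lesssim \|\sigma\|_\Sigma\|\tau\|_\Sigma$ and $|b(\tau,v)|\le \|\dd\tau\|_{V'}\|v\|_V\lesssim \|\tau\|_\Sigma\|v\|_V$. For coercivity on the kernel, I would use the exactness of \eqref{eq:shortcomplex0427} to identify $K=\dd^- P$, on which $\dd\tau=0$, so \eqref{eq:normequivalenceSigma} reduces to $\|\tau\|_\Sigma^2\eqsim \|\tau\|_{X'}^2=(\tau,\tau)_{X'}$, giving coercivity with the constant controlled by the norm-equivalence constant.

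For the inf-sup step I would exploit the commutative diagram \eqref{eq:shortdiagrammixedform} to build an explicit bounded right inverse of $\dd$. Given $v\in V$, set $\tau_v:=\Pi_\Sigma\Pi_V v\in\Sigma$; commutativity gives $\dd\tau_v=J_V v$, whence
\begin{equation*}
\langle \dd\tau_v,v\rangle=\langle J_V v,v\rangle=(v,v)_V=\|v\|_V^2,
\end{equation*}
while continuity of $\Pi_\Sigma$ and $\Pi_V$ yields $\|\tau_v\|_\Sigma\lesssim \|v\|_V$. Consequently
\begin{equation*}
\sup_{0\ne\tau\in\Sigma}\frac{\langle \dd\tau,v\rangle}{\|\tau\|_\Sigma}\ge \frac{\|v\|_V^2}{\|\tau_v\|_\Sigma}\gtrsim \|v\|_V,
\end{equation*}
which is the required inf-sup bound with a constant depending only on the continuity norms of $\Pi_\Sigma$ and $\Pi_V$.

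Assembling the three ingredients, Brezzi's theorem delivers a unique solution together with $\|\sigma\|_\Sigma+\|u\|_V\lesssim \|g\|_{\Sigma'}+\|f\|_{V'}$. The only nontrivial step is the inf-sup, and the real content is the recognition that the vertical composition $\Pi_\Sigma\Pi_V$ in \eqref{eq:shortdiagrammixedform} is, by design, a stable right inverse of $\dd$ up to the isomorphism $J_V$; modulo that observation the argument is standard Brezzi bookkeeping, and the remaining subtlety (that $(\cdot,\cdot)_{X'}$ need only be used as an inner product on the kernel, where it inherits coercivity from the norm equivalence) is dispatched by the kernel identification above.
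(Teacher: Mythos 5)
Your proposal is correct and follows essentially the same route as the paper's proof: continuity and kernel coercivity read off from the norm equivalence \eqref{eq:normequivalenceSigma}, the inf-sup condition verified by taking $\tau=\Pi_\Sigma\Pi_V v$ and using commutativity to get $\langle \dd\tau, v\rangle=\langle J_V v,v\rangle=\|v\|_V^2$, and then Babu\v{s}ka--Brezzi theory. The only cosmetic difference is that you additionally identify $\ker(\dd)=\dd^-P$ via exactness, which is not needed for the coercivity step.
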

\begin{proof}
It is trivial that the bilinear forms in the mixed formulation \eqref{eq:mixed1}-\eqref{eq:mixed2} are continuous due to \eqref{eq:normequivalenceSigma}.
Using \eqref{eq:normequivalenceSigma} again, it is also obvious that
\[
\|\tau\|_{\Sigma}\lesssim \|\tau\|_{X'} + \|\dd \tau\|_{V'}=\|\tau\|_{X'} \quad \forall~\tau\in\ker d.
\]
By Babu\v{s}ka-Brezzi theory (cf.~\cite{BabuvskaAziz1972, Brezzi1974, BoffiBrezziFortin2013}), it suffices to prove the inf-sup condition
\begin{equation}\label{eq:infsup}
\|v\|_V\lesssim \sup_{\tau\in\Sigma}\frac{\langle \dd \tau, v\rangle}{\|\tau\|_{\Sigma}} \quad \forall~v\in V.
\end{equation}
For each $v\in V$, let $\tau=\Pi_{\Sigma} \Pi_V v$. It is apparent that
\[
\|\tau\|_{\Sigma}=\|\Pi_{\Sigma} \Pi_V v\|_{\Sigma}\lesssim \|v\|_V.
\]
Then making use of the commutative diagram \eqref{eq:shortdiagrammixedform} and \eqref{eq:J_X}, it follows
\[
\langle \dd \tau, v\rangle=\langle \dd \Pi_{\Sigma} \Pi_V v, v\rangle=\langle J_V v, v\rangle=\|v\|_V^2.
\]
Hence we have
\[
\|v\|_V\|\tau\|_{\Sigma}\lesssim \|v\|_V^2=\langle \dd \tau, v\rangle,
\]
which means the inf-sup condition \eqref{eq:infsup}.
\end{proof}

\subsubsection{Unfolded three-term formulation}
We derive an equivalent three-term formulation of the mixed formulation \eqref{eq:mixed1}-\eqref{eq:mixed2} when $X'$ is a Sobolev space of negative order. In this subsection, we assume the bilinear form $(\cdot, \cdot)_{X'}$ is the corresponding inner product of $X'$.

Let $\phi=J_X^{-1}\sigma\in X$. By \eqref{eq:Xdualinnerproduct}, we can rewrite \eqref{eq:mixed1} as
\[
\langle\tau, \phi\rangle+ \langle \dd \tau, u\rangle =\langle g, \tau\rangle \quad \forall~\tau\in\Sigma.
\]
Noting that $\sigma=J_X\phi$, it follows from \eqref{eq:J_X}
\[
\langle\sigma, \psi\rangle=\langle J_X\phi, \psi\rangle=(\phi, \psi)_X \quad \forall~\psi\in X.
\]
Therefore the mixed formulation \eqref{eq:mixed1}-\eqref{eq:mixed2} is equivalent to an unfolded three-term formulation:
find $(\phi, u, \sigma)\in X\times V\times\Sigma$ such that
\begin{align}
(\phi, \psi)_X - \langle \sigma, \dd 'v + \psi \rangle & =-\langle f, v\rangle + \langle g_X, \psi\rangle  \quad \forall~(\psi, v)\in X\times V, \label{eq:mixedunfolded1} \\
\langle \dd 'u + \phi , \tau \rangle & =  \langle g, \tau\rangle \quad\quad\quad\quad\quad\quad\, \forall~\tau\in\Sigma, \label{eq:mixedunfolded2}
\end{align}
with $g_X=0$.
It is interesting to note that the variable $\sigma$ can be formally interpreted as the Lagrange multiplier to impose the constraint $I'\phi = -\dd ' u$ in $\Sigma '$ if $g=0$, where $I: \Sigma\to X'$ is the natural embedding operator.
The operator equation of the mixed formulation \eqref{eq:mixedunfolded1}-\eqref{eq:mixedunfolded2} is
\begin{equation}\label{eq:threeoperator}
\begin{pmatrix}
J_X & 0 & -I \\
0 & 0 & -\dd \\
I' & \dd' & 0
\end{pmatrix}
\begin{pmatrix}
\phi \\
u\\
\sigma
\end{pmatrix}
=\begin{pmatrix}
g_X \\
-f\\
g
\end{pmatrix}.
\end{equation}

According to Theorem~\ref{thm:mixedwellposed}, we immediately obtain the well-posedness of the mixed formulation \eqref{eq:mixedunfolded1}-\eqref{eq:mixedunfolded2} for $g_X=0$.
The well-posedness of the mixed formulation \eqref{eq:mixedunfolded1}-\eqref{eq:mixedunfolded2} for general $g_X\in X'$ is given as follows.
\begin{theorem}\label{thm:mixedunfoldedwellposed}
Assume the exact sequence \eqref{eq:shortcomplex0427}, the commutative diagram \eqref{eq:shortdiagrammixedform} and the norm equivalence \eqref{eq:normequivalenceSigma} hold, then
the unfolded mixed formulation \eqref{eq:mixedunfolded1}-\eqref{eq:mixedunfolded2} is uniquely solvable. Moreover, we have the stability result
\[
\|\phi\|_{X} + \|\sigma\|_{\Sigma} + \|u\|_V\lesssim \|g_X\|_{X'}+\|g\|_{\Sigma'}+\|f\|_{V'}.
\]
\end{theorem}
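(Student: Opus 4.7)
The plan is to reduce the general case $g_X \in X'$ to the already handled case $g_X = 0$ (which is equivalent, via $\phi = J_X^{-1}\sigma$, to the two-term formulation \eqref{eq:mixed1}-\eqref{eq:mixed2} whose well-posedness is Theorem~\ref{thm:mixedwellposed}) by an appropriate shift of the unknown $\phi$ and an associated modification of the right-hand side in $\Sigma'$.

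First I would introduce the shift $\phi_0 := J_X^{-1} g_X \in X$, with $\|\phi_0\|_X = \|g_X\|_{X'}$, and set $\tilde\phi := \phi - \phi_0$. Substituting $\phi = \tilde\phi + \phi_0$ into \eqref{eq:mixedunfolded1} uses $(\phi_0,\psi)_X = \langle J_X \phi_0, \psi\rangle = \langle g_X, \psi\rangle$ to cancel the $\langle g_X,\psi\rangle$ on the right, producing the $g_X = 0$ version of \eqref{eq:mixedunfolded1} for the triple $(\tilde\phi, u, \sigma)$. Substituting into \eqref{eq:mixedunfolded2} gives $\langle \dd' u + \tilde\phi, \tau\rangle = \langle g,\tau\rangle - \langle \tau,\phi_0\rangle$ for all $\tau\in\Sigma$, which motivates defining $\tilde g \in \Sigma'$ by $\langle \tilde g, \tau\rangle := \langle g,\tau\rangle - \langle \tau, \phi_0\rangle$. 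The main thing to verify here is that $\tilde g$ indeed lies in $\Sigma'$ with the bound $\|\tilde g\|_{\Sigma'} \lesssim \|g\|_{\Sigma'} + \|g_X\|_{X'}$; this is where the norm equivalence \eqref{eq:normequivalenceSigma} is essential, since it yields $|\langle \tau, \phi_0\rangle| \le \|\tau\|_{X'}\|\phi_0\|_X \lesssim \|\tau\|_\Sigma \|g_X\|_{X'}$.

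At this point $(\tilde\phi, u, \sigma)$ solves the unfolded system with data $(0, \tilde g, f)$. Because $g_X = 0$, testing \eqref{eq:mixedunfolded1} with $v=0$ forces $\sigma = J_X \tilde\phi$, reducing the three-term system to the two-term mixed problem \eqref{eq:mixed1}-\eqref{eq:mixed2} for $(\sigma, u)$ with data $(\tilde g, f)$. Theorem~\ref{thm:mixedwellposed} then provides existence, uniqueness, and the estimate $\|\sigma\|_\Sigma + \|u\|_V \lesssim \|\tilde g\|_{\Sigma'} + \|f\|_{V'}$. Since $J_X:X\to X'$ is an isometric isomorphism, $\|\tilde\phi\|_X = \|J_X\tilde\phi\|_{X'} = \|\sigma\|_{X'} \le \|\sigma\|_\Sigma$ by \eqref{eq:normequivalenceSigma}, and consequently $\|\phi\|_X \le \|\tilde\phi\|_X + \|\phi_0\|_X \lesssim \|\sigma\|_\Sigma + \|g_X\|_{X'}$. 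Combining these with the bound on $\tilde g$ yields the desired stability estimate
\[
\|\phi\|_X + \|\sigma\|_\Sigma + \|u\|_V \lesssim \|g_X\|_{X'} + \|g\|_{\Sigma'} + \|f\|_{V'}.
\]

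The only genuine obstacle is the verification that the shifted datum $\tilde g$ belongs to $\Sigma'$, which is not automatic since $\phi_0$ lives in $X$ rather than $\Sigma'$; it is precisely the embedding $\Sigma \hookrightarrow X'$ encoded in diagram \eqref{eq:shortdiagrammixedform} and quantified by \eqref{eq:normequivalenceSigma} that makes this step work. Every other piece of the argument is a bookkeeping exercise built on top of Theorem~\ref{thm:mixedwellposed}. Uniqueness follows from uniqueness in the $g_X=0$ case together with the invertibility of the shift $\phi \mapsto \phi - \phi_0$.
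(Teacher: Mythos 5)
Your argument is correct, but it takes a genuinely different route from the paper. You reduce the general datum $g_X\in X'$ to the case $g_X=0$ by the Riesz lift $\phi_0=J_X^{-1}g_X$ and a compensating shift $\tilde g = g - I\phi_0|_\Sigma$ of the data in $\Sigma'$, and then invoke the equivalence (established in the preceding subsection) between the three-term system with $g_X=0$ and the two-term formulation \eqref{eq:mixed1}--\eqref{eq:mixed2}, so that Theorem~\ref{thm:mixedwellposed} does all the work. The one point you rightly flag --- that $\tau\mapsto\langle\tau,\phi_0\rangle$ is bounded on $\Sigma$ --- is exactly where the embedding $\Sigma\hookrightarrow X'$ quantified by \eqref{eq:normequivalenceSigma} enters, and your bounds $\|\tilde g\|_{\Sigma'}\lesssim\|g\|_{\Sigma'}+\|g_X\|_{X'}$ and $\|\tilde\phi\|_X=\|\sigma\|_{X'}\lesssim\|\sigma\|_\Sigma$ close the estimate. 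The paper instead treats \eqref{eq:threeoperator} directly as a $2\times2$ block saddle point system with $B=[I'\;\dd']:X\times V\to\Sigma'$, proves coercivity of $(\cdot,\cdot)_X$ on $\ker(B)$ (via $\|v\|_V\lesssim\|\dd'v\|_{\Sigma'}=\|I'\psi\|_{\Sigma'}\lesssim\|\psi\|_X$) and the inf-sup condition for $B$ (again from \eqref{eq:normequivalenceSigma}), and applies Babu\v{s}ka--Brezzi theory. Your reduction is more economical and makes transparent why the constant depends linearly on $\|g_X\|_{X'}$, but it leans on the formal equivalence of the two formulations; the paper's direct verification of the Brezzi conditions for the unfolded system is self-contained and is the version that transfers to (nonconforming or non-nested) discretizations of the three-term system, where the pointwise identity $\sigma=J_X\phi$ used in your reduction is no longer available. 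One cosmetic remark: writing $\sigma=J_X\tilde\phi$ identifies $\sigma\in\Sigma$ with its image $I\sigma\in X'$; this is consistent with the paper's own notation, but it is worth stating that the relation forced by testing \eqref{eq:mixedunfolded1} with $v=0$ is $I\sigma=J_X\tilde\phi$ in $X'$.
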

\begin{proof}
We split \eqref{eq:threeoperator} as a $2\times 2$ block saddle point system by treating $X\times V$ as one space and denote by $B = [I' \; \dd ']: X\times V \to \Sigma'$ as $B((\phi, u)) = I'\phi + \dd' u$.

By \eqref{eq:normequivalenceSigma}, all the bilinear forms in the mixed formulation \eqref{eq:mixedunfolded1}-\eqref{eq:mixedunfolded2} are obviously continuous.
Let $(\psi, v)\in \ker(B)\cap (X\times V)$, i.e., $I'\psi=-\dd 'v$.
Due to \eqref{eq:J_X} and the commutative diagram \eqref{eq:shortdiagrammixedform}, we get
\begin{align*}
\|v\|_V^2=&\langle J_Vv, v\rangle=\langle \dd \Pi_{\Sigma} \Pi_V v, v\rangle=\langle \Pi_{\Sigma} \Pi_V v, \dd 'v\rangle \\
\leq &\|\Pi_{\Sigma} \Pi_V v\|_{\Sigma}\|\dd 'v\|_{\Sigma'}\lesssim \|v\|_V\|\dd 'v\|_{\Sigma'}.
\end{align*}
Noting that $I': X\to \Sigma'$ is continuous, we have
\[
\|v\|_V\lesssim \|\dd 'v\|_{\Sigma'}=\|I'\psi\|_{\Sigma'}\lesssim \|\psi\|_{X},
\]
which implies the coercivity on the kernel of $B$.

On the other hand, for any $\tau\in\Sigma$ it follows from \eqref{eq:normequivalenceSigma} that
\[
\|\tau\|_{\Sigma}\lesssim \|\tau\|_{X'} + \|\dd \tau\|_{V'}=\sup_{\psi\in X}\frac{\langle\tau, I'\psi\rangle}{\|\psi\|_X} + \sup_{v\in V}\frac{\langle \dd \tau, v\rangle}{\|v\|_V}\lesssim \sup_{\psi\in X, v\in V}\frac{\langle\tau, I'\psi + \dd' v\rangle}{\|\psi\|_X+ \|v\|_V},
\]
which is just the inf-sup condition of $B$. Therefore the required result is guaranteed by Babu\v{s}ka-Brezzi theory.
\end{proof}


\subsubsection{Decoupled formulation}
We decompose the mixed formulation \eqref{eq:mixed1}-\eqref{eq:mixed2} using the Helmholtz decomposition \eqref{eq:abstracthelmholtzdecomp2}.
Applying the Helmholtz decomposition~\eqref{eq:abstracthelmholtzdecomp2} to both the trial and test functions
\[
\sigma = \dd ^-p +\Pi_{\Sigma} \Pi_V w, \quad \tau = \dd ^-q +\Pi_{\Sigma} \Pi_V \chi,
\]
where $p, q\in P/\ker \dd ^-$, and $w, \chi\in V$. Then substituting them into the mixed formulation \eqref{eq:mixed1}-\eqref{eq:mixed2}, we have
\begin{align}
(\dd ^-p +\Pi_{\Sigma} \Pi_V w, \Pi_{\Sigma} \Pi_V \chi)_{X'}+ \langle \dd \Pi_{\Sigma} \Pi_V \chi, u\rangle&=\langle g, \Pi_{\Sigma} \Pi_V \chi\rangle, \label{eq:mixedtemp1}\\
(\dd ^-p +\Pi_{\Sigma} \Pi_V w, \dd ^-q)_{X'} &=\langle g, \dd ^-q\rangle, \label{eq:mixedtemp2}\\
\langle \dd \Pi_{\Sigma} \Pi_V w, v\rangle&=\langle f, v\rangle, \label{eq:mixedtemp3}
\end{align}
for any $\chi\in V$, $q\in P/\ker \dd ^-$ and $v\in V$.
We obtain from the commutative diagram \eqref{eq:shortdiagrammixedform} and \eqref{eq:J_X} again
\[
\langle \dd \Pi_{\Sigma} \Pi_V w, v\rangle=\langle J_V w, v\rangle = ( w, v)_V, \quad \langle \dd \Pi_{\Sigma} \Pi_V \chi, u\rangle=(\chi, u)_V.
\]
Therefore, the mixed formulation \eqref{eq:mixedtemp1}-\eqref{eq:mixedtemp3} is equivalent to (in backwards order): find $w\in V$, $p\in P/\ker \dd ^-$, and $u\in V$ such that
\begin{align}
(w, v)_V&=\langle f, v\rangle  \quad\quad\quad\quad\quad\quad\quad\quad\quad\quad\quad \forall~v\in V, \label{eq:mixedequiv1} \\
(\dd ^-p, \dd ^-q)_{X'} &=\langle g, \dd ^-q\rangle - (\Pi_{\Sigma} \Pi_V w, \dd ^-q)_{X'} \quad\; \forall~q\in P/\ker \dd ^-, \label{eq:mixedequiv2} \\
(u, \chi)_V&=\langle g, \Pi_{\Sigma} \Pi_V \chi\rangle - (\sigma, \Pi_{\Sigma} \Pi_V \chi)_{X'} \;\;\; \forall~\chi\in V, \label{eq:mixedequiv3}
\end{align}
where $\sigma = \dd ^-p +\Pi_{\Sigma} \Pi_V w$.

\begin{remark}\rm
When the decomposition \eqref{eq:abstracthelmholtzdecomp2} is orthogonal with respect to $(\cdot, \cdot)_{X'}$ and $g=0$, the second equation~\eqref{eq:mixedequiv2} will disappear. $\hfill\Box$
\end{remark}

Applying the Helmholtz decomposition \eqref{eq:abstracthelmholtzdecomp2} to the unfolded formulation, the decoupled mixed formulation \eqref{eq:mixedunfolded1}-\eqref{eq:mixedunfolded2} is equivalent to find $w, u\in V$, $\phi\in X$, and $p\in P/\ker \dd ^-$ such that
\begin{align}
(w, v)_V&=\langle f, v\rangle  \quad\quad\quad\quad\quad\;\;\; \forall~v\in V, \label{eq:mixedunfoldedequiv1} \\
(\phi, \psi)_{X} - \langle \dd ^-p, \psi\rangle &= \langle\Pi_{\Sigma} \Pi_V w, \psi\rangle \quad\quad\;\; \forall~\psi\in X, \label{eq:mixedunfoldedequiv2} \\
\langle \dd ^-q, \phi\rangle &=\langle g, \dd ^-q\rangle \quad\quad\quad\quad\;\; \forall~q\in P/\ker \dd ^-, \label{eq:mixedunfoldedequiv3} \\
(u, \chi)_V&=\langle g-\phi, \Pi_{\Sigma} \Pi_V \chi\rangle \;\;\;\, \forall~\chi\in V. \label{eq:mixedunfoldedequiv4}
\end{align}
The middle system \eqref{eq:mixedunfoldedequiv2}-\eqref{eq:mixedunfoldedequiv3} of $(\phi, p)$ is now a Stokes-type system.

We summarize the former derivation as follows.
\begin{theorem}\label{thm:mixeddecouple}
Assume the exact sequence \eqref{eq:shortcomplex0427}, the commutative diagram \eqref{eq:shortdiagrammixedform} and the norm equivalence \eqref{eq:normequivalenceSigma} hold, then
the mixed formulation \eqref{eq:mixed1}-\eqref{eq:mixed2} can be decoupled as three elliptic equations \eqref{eq:mixedequiv1}-\eqref{eq:mixedequiv3} or four equations \eqref{eq:mixedunfoldedequiv1}-\eqref{eq:mixedunfoldedequiv4}.
\end{theorem}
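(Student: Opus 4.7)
The plan is to prove the theorem essentially by formalizing the substitution calculations that were performed in the text leading up to the statement, and then arguing equivalence in both directions. The key ingredient is the stable Helmholtz decomposition \eqref{eq:abstracthelmholtzdecomp2} obtained by applying Theorem~\ref{thm:abstracthelmholtzdecomp} to the commutative diagram \eqref{eq:shortdiagrammixedform}, combined with the well-posedness results Theorem~\ref{thm:mixedwellposed} and Theorem~\ref{thm:mixedunfoldedwellposed}, which ensure that each formulation under consideration has a unique solution.

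First I would treat the three-equation reformulation. Given the unique solution $(\sigma,u)\in\Sigma\times V$ of \eqref{eq:mixed1}-\eqref{eq:mixed2}, I decompose $\sigma = \dd^- p + \Pi_\Sigma\Pi_V w$ uniquely according to \eqref{eq:abstracthelmholtzdecomp2} with $p\in P/\ker\dd^-$ and $w\in V$, and use $\tau = \dd^- q + \Pi_\Sigma\Pi_V\chi$ as a general test function. Plugging in produces \eqref{eq:mixedtemp1}-\eqref{eq:mixedtemp3}. The crucial simplification is that from the commutative diagram \eqref{eq:shortdiagrammixedform} and the defining identity \eqref{eq:J_X} of $J_V$, one has
\[
\langle \dd\,\Pi_\Sigma\Pi_V\eta, v\rangle = \langle J_V\eta, v\rangle = (\eta,v)_V
\]
for all $\eta,v\in V$. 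Applied with $\eta=w$ (and $v$ arbitrary), this reduces \eqref{eq:mixedtemp3} to \eqref{eq:mixedequiv1}; applied with $\eta=\chi$ and $v=u$, it reduces \eqref{eq:mixedtemp1} to \eqref{eq:mixedequiv3}. The remaining orthogonality-type identity \eqref{eq:mixedtemp2} becomes \eqref{eq:mixedequiv2} after moving the $w$-term to the right. For the converse, solving \eqref{eq:mixedequiv1}-\eqref{eq:mixedequiv3} in sequence (each being well-posed: \eqref{eq:mixedequiv1} and \eqref{eq:mixedequiv3} are Riesz-representation problems in $V$, and \eqref{eq:mixedequiv2} is coercive on $P/\ker\dd^-$ because $\dd^-$ is injective there and $(\cdot,\cdot)_{X'}$ is coercive on $\dd^- P\subset\ker\dd$ by \eqref{eq:normequivalenceSigma}), I recover $(\sigma,u)$ with $\sigma=\dd^- p+\Pi_\Sigma\Pi_V w$, and verify it solves \eqref{eq:mixed1}-\eqref{eq:mixed2} by running the substitution in reverse and invoking the direct-sum property of the Helmholtz decomposition applied to the test function $\tau$.

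Second, for the four-equation unfolded version \eqref{eq:mixedunfoldedequiv1}-\eqref{eq:mixedunfoldedequiv4}, I repeat the same Helmholtz-substitution argument but now starting from \eqref{eq:mixedunfolded1}-\eqref{eq:mixedunfolded2} with $g_X=0$, using $\phi=J_X^{-1}\sigma\in X$ as the additional unknown. The equation \eqref{eq:mixedunfoldedequiv1} comes from \eqref{eq:mixed2} via the same $\langle\dd\Pi_\Sigma\Pi_V w,v\rangle=(w,v)_V$ simplification. The pair \eqref{eq:mixedunfoldedequiv2}-\eqref{eq:mixedunfoldedequiv3} is precisely \eqref{eq:mixed1} tested against the two parts of the Helmholtz decomposition after replacing $(\sigma,\tau)_{X'}$ by $\langle\tau,\phi\rangle$ via \eqref{eq:Xdualinnerproduct}, and \eqref{eq:mixedunfoldedequiv4} is again \eqref{eq:mixed1} tested with $\tau=\Pi_\Sigma\Pi_V\chi$. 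Here the Stokes-type middle block \eqref{eq:mixedunfoldedequiv2}-\eqref{eq:mixedunfoldedequiv3} is well-posed because $\dd^-:P/\ker\dd^-\to X'$ inherits an inf-sup condition from the exactness of \eqref{eq:shortcomplex0427} together with \eqref{eq:normequivalenceSigma}, while $(\cdot,\cdot)_X$ is coercive on $X$.

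The main obstacle I anticipate is not the substitution itself, which is bookkeeping, but justifying the equivalence of the systems cleanly: in particular, ensuring that \eqref{eq:mixedequiv2} is solvable and that its solution space $P/\ker\dd^-$ is the right one. This requires carefully invoking the exactness of \eqref{eq:shortcomplex0427} (so that $\dd^-$ is injective on the quotient) and the norm equivalence \eqref{eq:normequivalenceSigma} (so that $(\cdot,\cdot)_{X'}$ restricted to $\dd^- P$ is coercive). The argument should proceed by showing that the block structure of the decoupled system mirrors the direct-sum structure of the Helmholtz decomposition, so that uniqueness of the decoupled solution follows from the well-posedness already established for \eqref{eq:mixed1}-\eqref{eq:mixed2} in Theorem~\ref{thm:mixedwellposed} (respectively \eqref{eq:mixedunfolded1}-\eqref{eq:mixedunfolded2} in Theorem~\ref{thm:mixedunfoldedwellposed}); conversely, solvability of each decoupled subproblem follows from standard Lax-Milgram or Babu\v{s}ka-Brezzi arguments, and uniqueness is again inherited from the Helmholtz decomposition being a direct sum.
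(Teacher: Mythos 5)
Your proposal is correct and follows essentially the same route as the paper: the paper's ``proof'' of Theorem~\ref{thm:mixeddecouple} is precisely the substitution of the Helmholtz decomposition \eqref{eq:abstracthelmholtzdecomp2} into trial and test functions, reduced via the identity $\langle \dd\,\Pi_{\Sigma}\Pi_V\eta, v\rangle=\langle J_V\eta,v\rangle=(\eta,v)_V$ from the commutative diagram, carried out in the text immediately preceding the theorem statement. Your additional remarks on the well-posedness of each decoupled subproblem and the reverse implication are sound bookkeeping that the paper leaves implicit, but they do not constitute a different argument.
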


\begin{remark}
Let $(\cdot, \cdot)_{\Sigma}$ be the inner product of Hilbert space $\Sigma$.
When
the space of harmonic forms
\[
\mathfrak H:=\left\{\tau\in\Sigma: \dd\tau=0,\; (\tau, \dd^- q)_{\Sigma}=0\quad \forall~q\in P\right\}
\]
is non-trivial,
by Remark~\ref{rmk:helmdecomHarmonic} we have
the Helmholtz decomposition
\begin{equation*}
\Sigma = \dd ^-P \oplus \Pi_{\Sigma} \Pi_V V \oplus \mathfrak H.
\end{equation*}
Then
the mixed formulation \eqref{eq:mixed1}-\eqref{eq:mixed2} can be decoupled as:
find $w, u\in V$, $p\in P/\ker \dd ^-$, and $r\in\mathfrak H$ such that
\begin{align*}
(w, v)_V&=\langle f, v\rangle  \quad\quad\quad\quad\quad\quad\quad\quad\quad\quad\quad\quad\quad\;\;\; \forall~v\in V, \\
(\dd ^-p+r, \dd ^-q+s)_{X'} &=\langle g, \dd ^-q+s\rangle - (\Pi_{\Sigma} \Pi_V w, \dd ^-q+s)_{X'} \; \forall~q\in P/\ker \dd ^-, s\in\mathfrak H, \\
(u, \chi)_V&=\langle g, \Pi_{\Sigma} \Pi_V \chi\rangle - (\sigma, \Pi_{\Sigma} \Pi_V \chi)_{X'} \quad\quad\quad\;\; \forall~\chi\in V,
\end{align*}
where $\sigma = \dd ^-p +\Pi_{\Sigma} \Pi_V w+r$. And similar modification can be applied to the decoupling of three-term formulation. In the decoupled formulation, however, the space of harmonic forms should be identified a priori which may not be easy for complicated geometric domains.
%
%
$\hfill\Box$
\end{remark}

In the rest of this section, we shall apply our abstract framework to several concrete examples.
It is worth mentioning again that the norm equivalence \eqref{eq:normequivalenceSigma} is trivial for all examples except the primal formulation of the fourth order $\curl$ equation. For this exceptional case, the norm equivalence \eqref{eq:normequivalenceSigma} is the result of \eqref{eq:temp20180205}.
Hence we will focus on the derivation of the commutative diagrams.

\subsection{The primal formulation of the biharmonic equation}\label{subsection:biharmonicdecouple}

Consider the biharmonic equation with homogenous Dirichlet boundary condition
\begin{equation}\label{eq:biharmonic}
\left\{
\begin{aligned}
&\Delta^2u=f\quad\quad\quad\;\;\text{in }\Omega,
\\
&u=\partial_{\nu}u=0\quad\quad\text{on }\partial\Omega,
\end{aligned}
\right.
\end{equation}
where $f\in H^{-1}(\Omega)$ and $\Omega\subset \mathbb{R}^n$ with $n=2, 3$.
The primal formulation of \eqref{eq:biharmonic} is to find $u\in H_0^2(\Omega)$ such that
\begin{equation}\label{eq:biharmonicprimal}
(\nabla^2u, \nabla^2 v)=\langle f, v\rangle\quad \forall~v\in H_0^2(\Omega).
\end{equation}

We first discuss the decoupling of \eqref{eq:biharmonicprimal} in two dimensions.
We recall
the known decoupling from \cite{HuangHuangXu2012,Huang2010, Gallistl2017}, which fits into the framework of this paper as
follows.
By the rotated version of the commutative diagram \eqref{Hdiv_1diagram}, we build up the commutative diagram
\begin{equation}\label{eq:biharmoniccommutativediagram}
\begin{array}{c}
\xymatrix@R-=1.0pc{
\bs H_0^{1}(\Omega; \mathbb R^2) \ar[r]^-{\Delta} & \boldsymbol{H}^{-1}(\Omega; \mathbb{R}^2) & \\
L_0^{2}(\Omega) \ar[r]^-{\grad} & \boldsymbol{H}^{-1}(\rot ,\Omega) \ar[r]^-{\rot }\ar@{}[u]|{\bigcup}
                & H^{-1}(\Omega) \ar[r]^-{} & 0 \\
&\boldsymbol H_0(\div, \Omega)  \ar[u]^-{I} & H_0^1(\Omega)\ar[u]_{\Delta}\ar[l]_-{\curl }}
\end{array},
\end{equation}
where recall that
\[
\boldsymbol H^{-1}(\rot , \Omega):=\{\boldsymbol{\phi}\in \boldsymbol{H}^{-1}(\Omega; \mathbb{R}^2): \rot \boldsymbol{\phi}\in H^{-1}(\Omega)\}
\]
with norm
$
\|\boldsymbol{\phi}\|_{H^{-1}(\rot )}^2:=\|\boldsymbol{\phi}\|_{-1}^2+\|\rot \boldsymbol{\phi}\|_{-1}^2.
$
According to Theorem~\ref{thm:abstracthelmholtzdecomp},
we have the Helmholtz decomposition
\begin{equation}\label{eq:Hrot_1helmholtzdecomp}
\boldsymbol{H}^{-1}(\rot ,\Omega) = \nabla L_0^{2}(\Omega) \oplus \curl  H_0^1(\Omega).
\end{equation}

The corresponding mixed formulation is to find $(\boldsymbol{\gamma} , u)\in \boldsymbol{H}^{-1}(\rot , \Omega)\times H_0^1(\Omega)$ such that
\begin{align}
(\boldsymbol\gamma, \boldsymbol\beta)_{-1}- \langle \rot \boldsymbol\beta, u\rangle & =0 \quad\quad\;\;\, \forall~\boldsymbol\beta\in\boldsymbol{H}^{-1}(\rot ,\Omega), \label{eq:biharmonicmixeda1}\\
\langle \rot \boldsymbol\gamma, v\rangle & =\langle f, v\rangle  \quad \forall~v\in H_0^1(\Omega), \label{eq:biharmonicmixeda2}
\end{align}
where $f\in H^{-1}(\Omega)$ and
$(\boldsymbol\gamma, \boldsymbol\beta)_{-1}:=-\langle\boldsymbol\Delta^{-1}\boldsymbol\gamma, \boldsymbol\beta\rangle=-\langle\boldsymbol\gamma, \boldsymbol\Delta^{-1}\boldsymbol\beta\rangle.$

By introducing variable
$\boldsymbol{\phi}=-\boldsymbol{\Delta}^{-1}\boldsymbol\gamma \in \boldsymbol{H}_0^1(\Omega; \mathbb{R}^2),$
the unfolded formulation is to find $(\boldsymbol{\gamma}, u, \boldsymbol{\phi})\in \boldsymbol{H}^{-1}(\rot , \Omega)\times H_0^1(\Omega)\times \boldsymbol H_0^1(\Omega; \mathbb{R}^2)$ such that
\begin{align}
\label{eq:biharmonic1} (\boldsymbol\nabla\boldsymbol\phi, \boldsymbol\nabla\boldsymbol\psi) + \langle \boldsymbol\gamma, \curl v-\boldsymbol\psi\rangle & =\langle f, v\rangle \quad \forall~(v, \boldsymbol{\psi})\in  H_0^1(\Omega)\times \boldsymbol H_0^1(\Omega; \mathbb{R}^2), \\
\label{eq:biharmonic2} \langle \boldsymbol\beta, \curl u-\boldsymbol\phi\rangle & =0  \quad\quad\;\;\; \forall~\boldsymbol\beta\in\boldsymbol{H}^{-1}(\rot ,\Omega),
\end{align}
which is just the rotation form of problem~(2.4) in~\cite{BrezziFortinStenberg1991}.

Equation \eqref{eq:biharmonic2} implies $\bs \phi = \curl u$, which is plugged into equation \eqref{eq:biharmonic1} by taking $\boldsymbol\psi=\curl v$ with $v\in H_0^2(\Omega)$ gives the primal formulation of the biharmonic equation \eqref{eq:biharmonicprimal} in two dimensions.

According to Theorem~\ref{thm:mixeddecouple}, the decoupled and unfolded formulation is to
find $w\in H_0^1(\Omega)$, $\boldsymbol{\phi}\in  \boldsymbol H_0^1(\Omega; \mathbb{R}^2)$, $p\in L_0^{2}(\Omega)$ and $u\in H_0^1(\Omega)$ such that
\begin{align}
(\curl  w, \curl  v)&=\langle f, v\rangle  \quad\quad\quad\quad \forall~v\in H_0^1(\Omega),  \label{eq:biharmonicdiscretedecouple1}\\
(\boldsymbol\nabla\boldsymbol\phi, \boldsymbol\nabla\boldsymbol\psi) + (\div\boldsymbol\psi, p) & =(\curl  w, \boldsymbol\psi) \quad\;\;\, \forall~\boldsymbol{\psi}\in  \boldsymbol H_0^1(\Omega; \mathbb{R}^2), \label{eq:biharmonicdiscretedecouple2}\\
(\div\boldsymbol\phi, q) &= 0 \quad\quad\quad\quad\quad\;\;\, \forall~ q\in L_0^{2}(\Omega),  \label{eq:biharmonicdiscretedecouple3}\\
(\curl  u, \curl  \chi)&= (\boldsymbol{\phi}, \curl \chi) \quad\quad \forall~\chi\in H_0^1(\Omega). \label{eq:biharmonicdiscretedecouple4}
\end{align}
Therefore we recover the decoupling that the primal formulation of the biharmonic equation \eqref{eq:biharmonicprimal} in two dimensions is equivalent to two Poisson equations and one Stokes equation~\cite{HuangHuangXu2012,Huang2010, Gallistl2017}.

Such decoupling of the biharmonic equation in two dimensions can be generalized in various ways.
First we consider
the fourth order elliptic singular perturbation problem with homogenous Dirichlet boundary condition
\begin{equation}\label{eq:4thsingular}
\varepsilon^2\Delta^2u-\Delta u=f.
\end{equation}
If we equip space $\boldsymbol H_0^1(\Omega; \mathbb{R}^2)$ with norm $(\|\cdot\|_0^2+\varepsilon^2|\cdot|_1^2)^{1/2}$ where $\varepsilon\geq0$,
the fourth order elliptic singular perturbation problem \eqref{eq:4thsingular} will be decoupled to
find $w\in H_0^1(\Omega)$, $\boldsymbol{\phi}\in  \boldsymbol H_0^1(\Omega; \mathbb{R}^2)$, $p\in L_0^{2}(\Omega)$ and $u\in H_0^1(\Omega)$ such that
\begin{align*}
(\curl  w, \curl  v)&=\langle f, v\rangle  \quad\quad\quad\quad \forall~v\in H_0^1(\Omega),  \\
(\boldsymbol\phi, \boldsymbol\psi)+\varepsilon^2(\boldsymbol\nabla\boldsymbol\phi, \boldsymbol\nabla\boldsymbol\psi) + (\div\boldsymbol\psi, p) & =(\curl  w, \boldsymbol\psi) \quad\;\;\, \forall~\boldsymbol{\psi}\in  \boldsymbol H_0^1(\Omega; \mathbb{R}^2), \\
(\div\boldsymbol\phi, q) &= 0 \quad\quad\quad\quad\quad\;\;\, \forall~ q\in L_0^{2}(\Omega),  \\
(\curl  u, \curl  \chi)&= (\boldsymbol{\phi}, \curl \chi) \quad\quad \forall~\chi\in H_0^1(\Omega).
\end{align*}
Note that we derived this decoupling in 2016 independently of \cite{Gallistl2017}.
The second and third equations form the Brinkman problem.
The robust wellposeness of the Brinkman problem with respect to the parameter $\varepsilon$ can be found in \cite{MardalWinther2004,OlshanskiiPetersReusken2006,MardalSchoberlWinther2012,MardalTaiWinther2002,XieXuXue2008}.
\XH{In \cite{MardalWinther2004,OlshanskiiPetersReusken2006,MardalSchoberlWinther2012}, the norms of the spaces $\boldsymbol L^2(\Omega; \mathbb{R}^2)\cap \varepsilon\boldsymbol H_0^1(\Omega; \mathbb{R}^2)$ and $H^1(\Omega)\cap L_0^{2}(\Omega)+\varepsilon^{-1}L_0^{2}(\Omega)$ were adopted for $\boldsymbol{\phi}$ and $p$ respectively, which induced an efficient and robust preconditioner.}

We then discuss the decoupling of \eqref{eq:biharmonicprimal} in three dimensions.
Based on the commutative diagram \eqref{Hdiv_1diagram}, the primal formulation \eqref{eq:biharmonicprimal} of the biharmonic equation in three dimensions is equivalent to
find $w\in H_0^1(\Omega)$, $\boldsymbol{\phi}\in  \boldsymbol H_0^1(\Omega; \mathbb{R}^3)$, $\bs p\in \bs L^{2}(\Omega; \mathbb{R}^3)/\nabla H^1(\Omega)$ and $u\in H_0^1(\Omega)$ such that (cf. \cite{Gallistl2017})
\begin{align*}
(\nabla  w, \nabla  v)&=\langle f, v\rangle  \quad\quad\; \forall~v\in H_0^1(\Omega),  \\
(\boldsymbol\nabla\boldsymbol\phi, \boldsymbol\nabla\boldsymbol\psi) + (\curl\boldsymbol\psi, \bs p) & =(\nabla w, \boldsymbol\psi) \quad \forall~\boldsymbol{\psi}\in  \boldsymbol H_0^1(\Omega; \mathbb{R}^3), \\
(\curl\boldsymbol\phi, \bs q) &= 0 \quad\quad\quad\quad \forall~ \bs q\in \bs L^{2}(\Omega; \mathbb{R}^3)/\nabla H^1(\Omega),  \\
(\nabla u, \nabla \chi)&= (\boldsymbol{\phi}, \nabla \chi) \quad\, \forall~\chi\in H_0^1(\Omega).
\end{align*}


\subsection{HHJ mixed formulation}
%
The Hellan-Herrmann-Johnson (HHJ) mixed formulation~\cite{Hellan1967,Herrmann1967,Johnson1973} of the biharmonic equation \eqref{eq:biharmonic} in two dimensions is to find $(\boldsymbol{\sigma} , u)\in \boldsymbol{H}^{-1}(\div \boldsymbol{\div },\Omega; \mathbb{S})\times H_0^1(\Omega)$ such that
\begin{align}
(\boldsymbol\sigma, \boldsymbol\tau)+ \langle \div \boldsymbol{\div }\boldsymbol\tau, u\rangle & =0 \quad\quad\;\;\, \forall~\boldsymbol\tau\in\boldsymbol{H}^{-1}(\div \boldsymbol{\div },\Omega; \mathbb{S}), \label{eq:hhjmixedformulation1}\\
\langle \div \boldsymbol{\div }\boldsymbol\sigma, v\rangle & =\langle f, v\rangle  \quad \forall~v\in H_0^1(\Omega). \label{eq:hhjmixedformulation2}
\end{align}

We recall
the known decoupling from \cite{KrendlRafetsederZulehner2016, RafetsederZulehner2017}, which fits into the framework of this paper as
follows.
We construct the following commutative diagram from \eqref{cd:hhj}
\[
\begin{array}{c}
\xymatrix@R-=1.0pc{
& \boldsymbol L^2(\Omega; \mathbb S) & \\
\boldsymbol{H}^{1}(\Omega; \mathbb{R}^2) \ar[r]^-{\sym\bs\curl} & \boldsymbol{H}^{-1}(\div \boldsymbol{\div },\Omega; \mathbb{S}) \ar[r]^-{\div \boldsymbol{\div }}\ar@{}[u]|{\bigcup}
                & H^{-1}(\Omega) \ar[r]^-{} & 0 \\
&\boldsymbol H_0^1(\Omega; \mathbb S)  \ar[u]^{\boldsymbol I} & H_0^1(\Omega)\ar[u]_{\Delta}\ar[l]_{\boldsymbol\pi}}
\end{array}.
\]

According to Theorem~\ref{thm:mixeddecouple} and Helmholtz decomposition \eqref{Hdivdiv_1symdec}, the mixed formulation~\eqref{eq:hhjmixedformulation1}-\eqref{eq:hhjmixedformulation2} can be decoupled to
find $w\in H_0^1(\Omega)$, $\boldsymbol p\in \boldsymbol{H}^{1}(\Omega; \mathbb{R}^2)/\boldsymbol{RM}^{\rot}$ and $u\in H_0^1(\Omega)$ such that
\begin{align}
(\nabla w, \nabla v)&=-\langle f, v\rangle  \quad\quad\quad\quad\quad\;\;\; \forall~v\in H_0^1(\Omega),  \label{eq:hhjmixedformdecouple1}\\
(\sym\bs\curl\boldsymbol p, \sym\bs\curl\boldsymbol q) &= - (\boldsymbol\pi w, \sym\bs\curl\boldsymbol q) \quad \forall~\boldsymbol q\in \boldsymbol{H}^{1}(\Omega; \mathbb{R}^2)/\boldsymbol{RM}^{\rot},  \label{eq:hhjmixedformdecouple2}\\
(\nabla u, \nabla \chi)&=  (\boldsymbol \sigma, \boldsymbol\pi \chi) \quad\quad\quad\quad\quad\;\; \forall~\chi\in H_0^1(\Omega), \label{eq:hhjmixedformdecouple3}
\end{align}
where $\boldsymbol \sigma = \sym\bs\curl\boldsymbol p +\boldsymbol\pi w$, and
\[
\boldsymbol{RM}^{\rot}:=\mathrm{span}\left\{\left(\begin{array}{c}
1 \\ 0
\end{array}\right), \left(\begin{array}{c}
0 \\ 1
\end{array}\right), \boldsymbol{x}\right\}.
\]
The second equation is also equivalent to the linear elasticity problem
\[
(\boldsymbol{\varepsilon}(\boldsymbol p^{\perp}), \boldsymbol{\varepsilon}(\boldsymbol q^{\perp})) = - (\boldsymbol\pi w, \boldsymbol{\varepsilon}(\boldsymbol q^{\perp})) \quad \forall~\boldsymbol q^{\perp}\in \boldsymbol{H}^{1}(\Omega; \mathbb{R}^2)/\boldsymbol{RM}
\]
where rigid motion space
\[
\boldsymbol{RM}:=\mathrm{span}\left\{\left(\begin{array}{c}
1 \\ 0
\end{array}\right), \left(\begin{array}{c}
0 \\ 1
\end{array}\right), \boldsymbol{x}^{\perp}\right\}.
\]
Such decomposition is firstly obtained in~\cite{KrendlRafetsederZulehner2016}, and has been recently generalized to the mixed boundary conditions in \cite{RafetsederZulehner2017}.
Similarly, we can also recover the decomposition of the mixed formulation \eqref{eq:hhjmixedformulation1}-\eqref{eq:hhjmixedformulation2} in three dimensions in \cite{PaulyZulehner2016}.

\subsection{The primal formulation of the fourth order $\curl$ equation}
Let $\Omega\subset\mathbb R^3$ and $\bs f\in \bs H(\div, \Omega)$ with $\div\bs f=0$. Consider the fourth order $\curl$ equation
\begin{equation}\label{eq:quadcurl}
\left\{
\begin{aligned}
&(\curl)^4\bs u=\bs f\quad\quad\quad\quad\quad\quad\;\;\text{in }\Omega, \\
&\div\bs u=0\quad\quad\quad\quad\quad\quad\quad\;\;\;\text{in }\Omega, \\
&\bs u\times\bs\nu=(\curl\bs u)\times\bs\nu=\bs 0\quad\text{on }\partial\Omega.
\end{aligned}
\right.
\end{equation}
The primal formulation of \eqref{eq:quadcurl} is to find $\bs  u\in\bs H_0^2(\curl, \Omega)$
such that
\begin{equation}\label{eq:quadcurlprimal}
(\curl\curl\bs u, \curl\curl\bs v)=(\bs f, \bs v) \quad \forall~\bs v\in \bs H_0^2(\curl, \Omega),
\end{equation}
where
\begin{align*}
\bs H_0^2(\curl, \Omega):=\{&\bs v\in\bs L^2(\Omega, \mathbb{R}^3): \curl \bs v, \curl\curl \bs v\in\bs L^2(\Omega, \mathbb{R}^3), \\
& \div\bs v=0,  \textrm{ and } \bs v\times\bs n=(\curl \bs v)\times\bs n=0 \}.
\end{align*}

Setting up the following commutative diagram from \eqref{cd:Hm1curl}
\begin{equation*}
\begin{array}{c}
\xymatrix@R-=1.0pc{
\bs H_0^{1}(\Omega; \mathbb R^3) \ar[r]^-{\Delta} & \boldsymbol{H}^{-1}(\Omega; \mathbb{R}^3) & \\
L_0^2(\Omega) \ar[r]^-{\grad} & \boldsymbol H^{-1}({\curl}, \Omega) \ar[r]^-{\curl}\ar@{}[u]|{\bigcup}
                & (K_0^c)' \ar[r]^-{} & 0 \\
&\boldsymbol H_0(\div , \Omega) \ar[u]^{I} & K_0^c \ar[u]_{\curl\curl}\ar[l]_-{\curl}}
\end{array},
\end{equation*}
the corresponding mixed formulation is to find $(\boldsymbol{\gamma} , \bs u)\in \boldsymbol{H}^{-1}(\curl , \Omega)\times K_0^c$ such that
\begin{align}
(\boldsymbol\gamma, \boldsymbol\beta)_{-1}- \langle \curl \boldsymbol\beta, \bs u\rangle & =0 \quad\quad\quad \forall~\boldsymbol\beta\in\boldsymbol{H}^{-1}(\curl ,\Omega), \label{eq:quarticcurlmixeda1}\\
\langle \curl \boldsymbol\gamma, \bs v\rangle & =(\bs f, \bs v)  \quad \forall~\bs v\in K_0^c. \label{eq:quarticcurlmixeda2}
\end{align}

By introducing variable $\boldsymbol{\phi}$ and applying the Helmholtz decomposition \eqref{eq:Hcurl_1helmholtzdecomp}
\[
\boldsymbol{\phi}=-\boldsymbol{\Delta}^{-1}\boldsymbol\gamma=-\boldsymbol{\Delta}^{-1}(\curl \bs w+\nabla p) \in \boldsymbol{H}_0^1(\Omega; \mathbb{R}^3),
\]
from Theorem~\ref{thm:mixeddecouple} the decoupled and unfolded system is to
find $\bs w\in K_0^c$, $\boldsymbol{\phi}\in  \boldsymbol H_0^1(\Omega; \mathbb{R}^3)$, $p\in L_0^{2}(\Omega)$ and $\bs u\in K_0^c$ such that
\begin{align}
(\curl  \bs w, \curl  \bs v)&=(\bs f, \bs v)  \quad\quad\quad\quad \forall~\bs v\in K_0^c,  \label{eq:quarticcurldecouple1}\\
(\boldsymbol\nabla\boldsymbol\phi, \boldsymbol\nabla\boldsymbol\psi) + (\div\boldsymbol\psi, p) & =(\curl  \bs w, \boldsymbol\psi) \quad\;\;\, \forall~\boldsymbol{\psi}\in  \boldsymbol H_0^1(\Omega; \mathbb{R}^3), \label{eq:quarticcurldecouple2}\\
(\div\boldsymbol\phi, q) &= 0 \quad\quad\quad\quad\quad\;\;\;\, \forall~ q\in L_0^{2}(\Omega),  \label{eq:quarticcurldecouple3}\\
(\curl \bs  u, \curl \bs  \chi)&= (\boldsymbol{\phi}, \curl \bs \chi) \quad\quad \forall~\bs \chi\in K_0^c. \label{eq:quarticcurldecouple4}
\end{align}
According to \eqref{eq:quarticcurldecouple3}-\eqref{eq:quarticcurldecouple4}, we have $\curl\bs u=\boldsymbol{\phi}\in\boldsymbol H_0^1(\Omega; \mathbb{R}^3)$. Note that
\[
(\boldsymbol\nabla\boldsymbol\phi, \boldsymbol\nabla\boldsymbol\psi)=(\curl\boldsymbol\phi, \curl\boldsymbol\psi)+(\div\boldsymbol\phi, \div\boldsymbol\psi).
\]
Thus we get from \eqref{eq:quarticcurldecouple2}
\[
(\curl\curl\bs u, \curl\curl\bs v)=(\curl \bs w, \curl \bs v)
\]
for any $\bs v\in K_0^c$ satisfying $\curl\bs v\in\boldsymbol H_0^1(\Omega; \mathbb{R}^3)$.
Combined with \eqref{eq:quarticcurldecouple1}, the decoupled formulation \eqref{eq:quarticcurldecouple1}-\eqref{eq:quarticcurldecouple4} is equivalent to the primal formulation \eqref{eq:quadcurlprimal} of the fourth order $\curl$ equation. The decoupling \eqref{eq:quarticcurldecouple1}-\eqref{eq:quarticcurldecouple4} is also presented in \cite{Zhang2016a} independently and based on a different approach.

Therefore we can solve the four $\curl$ problem by solving two Maxwell's equations and one Stokes equation. The Maxwell's equation with divergence-free constraint can be further decoupled into one vector Poisson equation and one scalar Poisson equation~\cite{ChenWuZhongZhou2016}.

\subsection{The mixed formulation of the fourth order $\curl$ equation}\label{sec:quadcurlmixed}
A mixed formulation of the fourth order $\curl$ equation \eqref{eq:quadcurl} is to find $\boldsymbol{\phi}\in \boldsymbol{H}(\curl\curl, (K_0^c)'$ and $\bs u\in K_0^c$ such that
\begin{align}
(\boldsymbol\phi, \boldsymbol\psi)+ \langle \curl\curl\boldsymbol\psi, \bs u\rangle & =0 \quad\quad\;\;\;\, \forall~\boldsymbol\psi\in\boldsymbol{H}(\curl\curl, (K_0^c)'), \label{eq:quadcurlmixedformulation1}\\
\langle \curl\curl\boldsymbol\phi, \bs v\rangle & =( \bs f, \bs v)  \quad \forall~\bs v\in K_0^c. \label{eq:quadcurlmixedformulation2}
\end{align}

The following commutative diagram is designed from \eqref{cd:curlcurldual}
\[
\begin{array}{c}
\xymatrix@R-=1.0pc@C=1.6cm{
& \boldsymbol L^2(\Omega; \mathbb R^3) & \\
\boldsymbol{H}^0(\bs\div, \dev\sym) \ar[r]^-{\spn^{-1}\skw} & \boldsymbol{H}(\curl\curl, (K_0^c)') \ar[r]^-{\curl\curl}\ar@{}[u]|{\bigcup}
                & (K_0^c)' \ar[r]^-{} & 0 \\
&K_0^c  \ar[u]^{I} & K_0^c\ar[u]_{\curl\curl}\ar[l]_{I}}
\end{array}.
\]
Applying Theorem~\ref{thm:mixeddecouple} and Helmholtz decomposition \eqref{complex:curlcurldual},
the mixed formulation \eqref{eq:quadcurlmixedformulation1}-\eqref{eq:quadcurlmixedformulation2} of the fourth order $\curl$ equation is equivalent to find $\bs w\in K_0^c$, $\bs \sigma\in \boldsymbol{H}^0(\bs\div, \dev\sym)$ and $\bs u\in K_0^c$ such that
\begin{align}
(\curl\bs w, \curl\bs v)&=\langle \bs f, \bs v\rangle \quad\quad\quad\quad\quad\;\; \forall~\bs v\in K_0^c, \label{eq:quadcurlmixedformdecouple1}\\
(\skw\boldsymbol\sigma, \skw\boldsymbol\tau) &= - (\spn\bs w, \skw\boldsymbol\tau)\quad \forall~\bs \tau\in \boldsymbol{H}^0(\bs\div, \dev\sym),  \label{eq:quadcurlmixedformdecouple2}\\
(\curl\bs u, \curl\bs\chi)&= -(\boldsymbol \phi, \bs\chi) \quad\quad\quad\quad\;\; \forall~\bs\chi\in K_0^c, \label{eq:quadcurlmixedformdecouple3}
\end{align}
where $\boldsymbol \phi = \spn^{-1}\skw\boldsymbol\sigma +\bs w$.

By \eqref{eq:H0divdevsym}, let $\bs\sigma=\rho\bs I+\spn\bs\alpha$. Then \eqref{eq:quadcurlmixedformdecouple2} is equivalent to find $\bs\alpha\in\boldsymbol{L}^2(\Omega; \mathbb R^3)$, $\rho\in L_0^2(\Omega)$ and $\bs p\in \bs H_0^{1}(\Omega; \mathbb R^3)$ such that
\begin{align*}
(\bs\alpha, \bs\beta) + (\gamma\bs I+\spn\bs\beta, \nabla \bs p) & =-(\bs w, \bs\beta) \quad\;\;\;\, \forall~\bs\beta\in\boldsymbol{L}^2(\Omega; \mathbb R^3), \gamma\in L_0^2(\Omega), \\
(\rho\bs I+\spn\bs\alpha, \nabla \bs q) &= 0 \quad\quad\quad\quad\quad \forall~ \bs q\in \bs H_0^{1}(\Omega; \mathbb R^3).
\end{align*}
Again both \eqref{eq:quadcurlmixedformdecouple1} and \eqref{eq:quadcurlmixedformdecouple3} are Maxwell's equations.

\subsection{The primal formulation of the triharmonic equation}\label{sec:triharmonicprimal}

Consider the triharmonic equation with homogenous Dirichlet boundary condition
\begin{equation}\label{eq:triharmonic}
\left\{
\begin{aligned}
&-\Delta^3u=f\quad\quad\quad\quad\quad\quad\text{in }\Omega,
\\
&u=\partial_{\nu}u=\partial_{\nu\nu}u=0\quad\quad\text{on }\partial\Omega,
\end{aligned}
\right.
\end{equation}
where $f\in H^{-2}(\Omega)$ with $\Omega\subset \mathbb{R}^2$.
The primal formulation of \eqref{eq:triharmonic} is to find $u\in H_0^3(\Omega)$ such that
\begin{equation}\label{eq:triharmonicprimal}
(\nabla^3u, \nabla^3 v)=\langle f, v\rangle\quad \forall~v\in H_0^3(\Omega).
\end{equation}

In view of the following commutative diagram derived from \eqref{cd:Rotrot_2symdec}
\[
\begin{array}{c}
\xymatrix@R-=1.0pc{
\bs H_0^{1}(\Omega; \mathbb S) \ar[r]^-{\Delta}  & \boldsymbol{H}^{-1}(\Omega; \mathbb{S}) & \\
\boldsymbol L^{2}(\Omega; \mathbb{R}^2) \ar[r]^-{\boldsymbol\varepsilon} & \boldsymbol{H}^{-2}(\rot \mathbf{rot},\Omega; \mathbb{S}) \ar[r]^-{\rot \mathbf{rot}}\ar@{}[u]|{\bigcup}
                & H^{-2}(\Omega) \ar[r]^-{} & 0 \\
&\boldsymbol H_0(\div , \Omega; \mathbb{S})  \ar[u]^-{\boldsymbol I} & H_0^2(\Omega)\ar[u]_{\Delta^2}\ar[l]_-{\bs\curl\curl }}
\end{array},
\]
the corresponding mixed formulation is to find $(\boldsymbol{\gamma} , u)\in \boldsymbol{H}^{-2}(\rot \mathbf{rot},\Omega; \mathbb{S})\times H_0^2(\Omega)$ such that
\begin{align}
(\boldsymbol\gamma, \boldsymbol\beta)_{-1}- \langle \rot \mathbf{rot}\boldsymbol\beta, u\rangle & =0 \quad\quad\;\;\, \forall~\boldsymbol\beta\in\boldsymbol{H}^{-2}(\rot \mathbf{rot},\Omega; \mathbb{S}), \label{eq:triharmonicmixeda1}\\
\langle \rot \mathbf{rot}\boldsymbol\gamma, v\rangle & =\langle f, v\rangle  \quad \forall~v\in H_0^2(\Omega). \label{eq:triharmonicmixeda2}
\end{align}


By introducing variable and applying the Helmholtz decomposition \eqref{Rotrot_2symdec}
\[
\boldsymbol{\phi}=-\boldsymbol{\Delta}^{-1}\boldsymbol\gamma=-\boldsymbol{\Delta}^{-1}(\boldsymbol\varepsilon\boldsymbol(p)+\bs\curl\curl  w) \in \boldsymbol{H}_0^1(\Omega; \mathbb{S}),
\]
from Theorem~\ref{thm:mixeddecouple} the decoupled system is to
find $w\in H_0^2(\Omega)$, $\boldsymbol{\phi}\in  \boldsymbol H_0^1(\Omega; \mathbb{S})$, $\boldsymbol p\in \boldsymbol L^{2}(\Omega; \mathbb{R}^2)/\boldsymbol{RM}$ and $u\in H_0^2(\Omega)$ such that
\begin{align}
(\bs\curl\curl  w, \bs\curl\curl  v)&=\langle f, v\rangle  \quad\quad\quad\quad\quad\;\;\; \forall~v\in H_0^2(\Omega),  \label{eq:triharmonicdecouple1}\\
(\boldsymbol\nabla\boldsymbol\phi, \boldsymbol\nabla\boldsymbol\psi) + (\mathbf{div}\boldsymbol\psi, \boldsymbol p) & =(\bs\curl\curl  w, \boldsymbol\psi) \quad\;\;\, \forall~\boldsymbol{\psi}\in  \boldsymbol H_0^1(\Omega; \mathbb{S}), \label{eq:triharmonicdecouple2}\\
(\mathbf{div}\boldsymbol\phi, \boldsymbol q) &= 0 \quad\quad\quad\quad\quad\quad\quad\;\; \forall~ \boldsymbol q\in \boldsymbol L^{2}(\Omega; \mathbb{R}^2)/\boldsymbol{RM},  \label{eq:triharmonicdecouple3}\\
(\bs\curl\curl  u, \bs\curl\curl  \chi)&= (\boldsymbol{\phi}, \bs\curl\curl \chi) \quad\quad \forall~\chi\in H_0^2(\Omega). \label{eq:triharmonicdecouple4}
\end{align}

It is evident that the primal formulation \eqref{eq:triharmonicprimal} of the triharmonic equation is equivalent to two biharmonic equations and one Stokes equation, c.f.~\eqref{eq:triharmonicdecouple1}-\eqref{eq:triharmonicdecouple4},
which is different from the decoupling in \cite{Gallistl2017}. These equations can be discretized directly, since there exist many finite elements for discretizing the biharmonic equations in the literature.

Recursively applying the decomposition, we can decouple the $m$-th harmonic equation $\Delta^m u=f$ with homogenous Dirichlet boundary condition, i.e., $u\in H_0^m(\Omega)$ into a sequence of Poisson and Stokes equations.

\subsection{The mixed formulation of the triharmonic equation}\label{sec:triharmonicmixed}
A mixed formulation of the triharmonic equation \eqref{eq:triharmonic} in two dimensions is to find $(\boldsymbol{\sigma} , u)\in \boldsymbol{H}^{-2}(\div^3, \Omega)\times H_0^2(\Omega)$ such that
\begin{align}
(\boldsymbol\sigma, \boldsymbol\tau)+ \langle \div\bs\div\bs\div\boldsymbol\tau, u\rangle & =0 \quad\quad\;\;\, \forall~\boldsymbol\tau\in\boldsymbol{H}^{-2}(\div^3, \Omega), \label{eq:triharmonicmixedformulation1}\\
\langle \div\bs\div\bs\div\boldsymbol\sigma, v\rangle & =\langle f, v\rangle  \quad \forall~v\in H_0^2(\Omega). \label{eq:triharmonicmixedformulation2}
\end{align}

Due to \eqref{cd:div3m2}, we construct the commutative diagram
\[
\begin{array}{c}
\xymatrix@R-=1.0pc@C=1.6cm{
& \boldsymbol{L}^2(\Omega; \mathbb S(3)) & \\
\boldsymbol{H}^1(\Omega; \mathbb S) \ar[r]^-{\sym\bs\curl} & \boldsymbol{H}^{-2}(\div^3, \Omega) \ar[r]^-{\div\bs\div\bs\div}\ar@{}[u]|{\bigcup}
                & H^{-2}(\Omega) \ar[r]^-{} & 0 \\
&H_0^1(\Omega; \mathbb R^2)  \ar[u]^{\bs\Xi} & H_0^{2}(\Omega)\ar[u]_{\Delta^2}\ar[l]_{\nabla}}
\end{array}.
\]
Then according to Theorem~\ref{thm:mixeddecouple} and the Helmholtz decomposition \eqref{complex:div3m2}, the mixed formulation \eqref{eq:triharmonicmixedformulation1}-\eqref{eq:triharmonicmixedformulation2} of the triharmonic equation is equivalent to find $w\in H_0^{2}(\Omega)$, $\bs \phi\in \boldsymbol{H}^{1}(\Omega; \mathbb{S})$ and $u\in H_0^{2}(\Omega)$ such that
\begin{align}
(\Delta w, \Delta v)&=\langle f, v\rangle  \quad\quad\quad\quad\quad\quad\quad\;\;\; \forall~v\in H_0^2(\Omega),  \label{eq:triharmonicmixedformdecouple1}\\
(\sym\bs\curl\boldsymbol\phi, \sym\bs\curl\boldsymbol\psi) &= - (\bs\Xi\nabla w, \sym\bs\curl\boldsymbol\psi) \quad \forall~\boldsymbol\psi\in \boldsymbol{H}^{1}(\Omega; \mathbb{S}),  \label{eq:triharmonicmixedformdecouple2}\\
(\Delta u, \Delta \chi)&= -(\boldsymbol \sigma, \bs\Xi\nabla\chi) \quad\quad\quad\quad\quad\, \forall~\chi\in H_0^2(\Omega), \label{eq:triharmonicmixedformdecouple3}
\end{align}
where $\boldsymbol \sigma = \sym\bs\curl\bs\phi+\bs\Xi\nabla w$.

\section{Discretization Based on Decoupled Formulation}
In this section, we will consider discretization based on the decoupled formulation.
By decoupling the fourth order equation into second order equations, we can use well known conforming finite element spaces. Furthermore, we can easily derive the superconvergence to the Galerkin projections without any mesh conditions, which is not known in the literature.

\subsection{Decoupled discretization of HHJ formulation}\label{sec:decoupledHHJ}
Let $f\in L^2(\Omega)$, $V_h\subset H_0^1(\Omega)$ and $\boldsymbol P_h\subset\boldsymbol{H}^{1}(\Omega; \mathbb{R}^2)$.
The discrete method based on formulation~\eqref{eq:hhjmixedformdecouple1}-\eqref{eq:hhjmixedformdecouple3}
is to
find $w_h\in V_h$, $\boldsymbol p_h\in \boldsymbol P_h/\boldsymbol{RM}^{\rot }$ and $u_h\in V_h$ such that
\begin{align}
(\nabla w_h, \nabla v_h)&=-( f, v_h)  \quad\quad\quad\quad\quad\quad\;\, \forall~v_h\in V_h,  \label{eq:hhjdecouplefem1}\\
(\sym\bs\curl\boldsymbol p_h, \sym\bs\curl\boldsymbol q_h) &= - (\boldsymbol\pi w_h, \sym\bs\curl\boldsymbol q_h) \quad \forall~\boldsymbol q_h\in \boldsymbol P_h/\boldsymbol{RM}^{\rot },  \label{eq:hhjdecouplefem2}\\
(\nabla u_h, \nabla \chi_h)&=  (\boldsymbol \sigma_h, \boldsymbol\pi \chi_h) \quad\quad\quad\quad\quad\;\;\forall~\chi_h\in V_h, \label{eq:hhjdecouplefem3}
\end{align}
where $\boldsymbol \sigma_h = \sym\bs\curl\bs p_h +\boldsymbol\pi w_h$.

Define projection $\bs P_h^{cs}: \boldsymbol{H}^{1}(\Omega; \mathbb{R}^2)\to\boldsymbol P_h/\boldsymbol{RM}^{\rot }$ by
\[
(\sym\bs\curl\bs P_h^{cs}\boldsymbol p, \sym\bs\curl\boldsymbol q_h)=(\sym\bs\curl\boldsymbol p, \sym\bs\curl\boldsymbol q_h).
\]
Similarly, denote by $P_h^{\grad}$ the $H^1$ orthogonal projection onto $V_h$. Let
\[
\boldsymbol \sigma_h^{\ast} := \sym\bs\curl\bs P_h^{cs}\boldsymbol p +\boldsymbol\pi w_h.
\]

\begin{lemma}\label{lem:errorestimate1}
Let $(w, \bs p, u)$ be the solution of HHJ mixed formulation \eqref{eq:hhjmixedformdecouple1}-\eqref{eq:hhjmixedformdecouple3} and  $(w_h, \bs p_h, u_h)$ be the solution of \eqref{eq:hhjdecouplefem1}-\eqref{eq:hhjdecouplefem3}. We then have the estimates
\[
|w-w_h|_1\lesssim \inf_{v_h\in V_h} |w-v_h|_1,
\]
\[
\|\sym\bs\curl(\bs P_h^{cs}\boldsymbol p-\boldsymbol p_h)\|_0 + \|\boldsymbol \sigma_h^{\ast}-\boldsymbol \sigma_h\|_0\lesssim \|w-w_h\|_0,
\]
\[
|P_h^{\grad}u-u_h|_1\lesssim \|\boldsymbol p-\boldsymbol p_h\|_0 + \|w-w_h\|_0.
\]
\end{lemma}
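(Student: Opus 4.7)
I would address the three bounds in the order they are stated, since each rests on the Galerkin orthogonality between a continuous equation from \eqref{eq:hhjmixedformdecouple1}--\eqref{eq:hhjmixedformdecouple3} and its discrete counterpart \eqref{eq:hhjdecouplefem1}--\eqref{eq:hhjdecouplefem3}, and the errors from the first two propagate into the third.

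For the first bound, equation \eqref{eq:hhjdecouplefem1} is nothing but the standard conforming Galerkin discretization of the Poisson problem \eqref{eq:hhjmixedformdecouple1} in $V_h \subset H_0^1(\Omega)$, so $w_h = P_h^{\grad} w$ and the quasi-optimality is C\'ea's lemma.

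For the second bound, I would subtract \eqref{eq:hhjdecouplefem2} from the identity defining $\bs P_h^{cs} \bs p$, using \eqref{eq:hhjmixedformdecouple2} on the continuous side, to obtain the orthogonality
\begin{equation*}
(\sym\bs\curl (\bs P_h^{cs}\bs p - \bs p_h), \sym\bs\curl \bs q_h) = -(\bs\pi(w - w_h), \sym\bs\curl \bs q_h) \quad \forall~\bs q_h \in \bs P_h/\bs{RM}^{\rot}.
\end{equation*}
Testing with $\bs q_h = \bs P_h^{cs}\bs p - \bs p_h$ and applying Cauchy--Schwarz yields $\|\sym\bs\curl(\bs P_h^{cs}\bs p - \bs p_h)\|_0 \le \|\bs\pi(w - w_h)\|_0 \lesssim \|w - w_h\|_0$. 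Since $\bs\sigma_h^{\ast} - \bs\sigma_h = \sym\bs\curl(\bs P_h^{cs}\bs p - \bs p_h)$ by construction, the bound on $\|\bs\sigma_h^\ast - \bs\sigma_h\|_0$ is the same.

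The third bound is the one that requires a small trick, and I expect it to be the main obstacle. Subtracting \eqref{eq:hhjdecouplefem3} from the identity defining $P_h^{\grad}u$ (via \eqref{eq:hhjmixedformdecouple3}) gives
\begin{equation*}
(\nabla(P_h^{\grad}u - u_h), \nabla \chi_h) = (\bs\sigma - \bs\sigma_h, \bs\pi \chi_h) \quad \forall~\chi_h \in V_h.
\end{equation*}
I would then split $\bs\sigma - \bs\sigma_h = \sym\bs\curl(\bs p - \bs p_h) + \bs\pi(w - w_h)$ and bound the two pieces. The $\bs\pi$-piece is immediate from $(\bs\pi(w-w_h), \bs\pi\chi_h) = 2(w-w_h, \chi_h)$ and Cauchy--Schwarz. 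For the $\sym\bs\curl$-piece, the key observation is that because $\bs\pi\chi_h = \chi_h \bs I$ is a scalar multiple of the identity, only the trace of $\sym\bs\curl(\bs p - \bs p_h)$ enters:
\begin{equation*}
(\sym\bs\curl(\bs p - \bs p_h), \bs\pi\chi_h) = (\rot(\bs p - \bs p_h), \chi_h).
\end{equation*}
Integrating by parts and using $\chi_h \in H_0^1(\Omega)$ then transfers the derivative onto $\chi_h$:
\begin{equation*}
(\rot(\bs p - \bs p_h), \chi_h) = -(\bs p - \bs p_h, \curl \chi_h).
\end{equation*}
Cauchy--Schwarz, Poincar\'e, and the choice $\chi_h = P_h^{\grad}u - u_h$ then deliver the claim. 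Without this integration-by-parts step the natural bound would involve $|\bs p - \bs p_h|_1$ or $\|\sym\bs\curl(\bs p - \bs p_h)\|_0$, which would destroy the superconvergence that the lemma is set up to support.
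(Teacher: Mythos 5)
Your proposal is correct and follows essentially the same route as the paper: the paper derives exactly the three error equations you write down and then dismisses the rest as a ``standard argument.'' Your elaboration of that standard argument is sound, and in particular your integration-by-parts step $(\sym\bs\curl(\bs p-\bs p_h),\bs\pi\chi_h)=\pm(\rot(\bs p-\bs p_h),\chi_h)=\pm(\bs p-\bs p_h,\curl\chi_h)$ is precisely the (unstated) point that lets the third bound involve $\|\bs p-\bs p_h\|_0$ rather than $\|\sym\bs\curl(\bs p-\bs p_h)\|_0$, which is what the subsequent superconvergence result needs.
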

\begin{proof}
Subtracting \eqref{eq:hhjdecouplefem1}-\eqref{eq:hhjdecouplefem3} from \eqref{eq:hhjmixedformdecouple1}-\eqref{eq:hhjmixedformdecouple3}, we get the error equations
\begin{align*}
(\nabla(w-w_h), \nabla v_h)&=0  \quad\quad\quad\quad\quad\quad\quad\quad\quad\quad\;\;\; \forall~v_h\in V_h,  \\
(\sym\bs\curl(\bs P_h^{cs}\boldsymbol p-\boldsymbol p_h), \sym\bs\curl\boldsymbol q_h) &= (\boldsymbol\pi (w_h-w), \sym\bs\curl\boldsymbol q_h) \;\; \forall~\boldsymbol q_h\in \boldsymbol P_h/\boldsymbol{RM}^{\rot },  \\
(\nabla (P_h^{\grad}u-u_h), \nabla \chi_h)&=  (\boldsymbol \sigma-\boldsymbol \sigma_h, \boldsymbol\pi \chi_h) \quad\quad\quad\quad\quad\, \forall~\chi_h\in V_h. 
\end{align*}
Then all the error estimates hold by standard argument.
\end{proof}

Furthermore, assume
\begin{equation}\label{eq:L2H1errorrelation}
\|w-w_h\|_0\lesssim h^{\delta}|w-w_h|_1,\quad \|\boldsymbol p-\boldsymbol p_h\|_0\lesssim h^{\delta}\|\sym\bs\curl(\boldsymbol p-\boldsymbol p_h)\|_0,
\end{equation}
where $\delta \in (1/2,1]$ is the regularity constant depending on the shape of $\Omega$. This assumption can be proved by the duality argument (cf.~\cite{Ciarlet1978, BrennerScott2008}).

\begin{theorem}
Let $(w, \bs p, u)$ be the solution of HHJ mixed formulation \eqref{eq:hhjmixedformdecouple1}-\eqref{eq:hhjmixedformdecouple3} and  $(w_h, \bs p_h, u_h)$ be the solution of \eqref{eq:hhjdecouplefem1}-\eqref{eq:hhjdecouplefem3}. We then have the estimates
\[
\|\sym\bs\curl(\boldsymbol p-\boldsymbol p_h)\|_0 + \|\boldsymbol \sigma-\boldsymbol \sigma_h\|_0\lesssim \inf_{\boldsymbol q_h\in \boldsymbol P_h} \|\sym\bs\curl(\boldsymbol p-\boldsymbol q_h)\|_0+\inf_{v_h\in V_h} |w-v_h|_1,
\]
\[
|u-u_h|_1\lesssim \inf_{v_h\in V_h} |u-v_h|_1+\inf_{\boldsymbol q_h\in \boldsymbol P_h} \|\sym\bs\curl(\boldsymbol p-\boldsymbol q_h)\|_0 + \inf_{v_h\in V_h} |w-v_h|_1.
\]
Moreover if assumption \eqref{eq:L2H1errorrelation} is true, we have the improved error estimates
\begin{equation}\label{eq:superconvergentestimate1}
\|\sym\bs\curl(\bs P_h^{cs}\boldsymbol p-\boldsymbol p_h)\|_0  + \|\boldsymbol \sigma_h^{\ast}-\boldsymbol \sigma_h\|_0 \lesssim h^{\delta}\inf_{v_h\in V_h} |w-v_h|_1,
\end{equation}
\begin{equation}\label{eq:superconvergentestimate2}
|P_h^{\grad}u-u_h|_1\lesssim h^{\delta} \inf_{\boldsymbol q_h\in \boldsymbol P_h} \|\sym\bs\curl(\boldsymbol p-\boldsymbol q_h)\|_0 + h^{\delta}\inf_{v_h\in V_h} |w-v_h|_1.
\end{equation}
\end{theorem}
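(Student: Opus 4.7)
The plan is to split all four estimates using the triangle inequality around the Galerkin-style projections $\bs P_h^{cs}\boldsymbol p$, $P_h^{\grad}u$, and $\boldsymbol \sigma_h^{\ast}$, and then invoke the three bounds already proved in Lemma~\ref{lem:errorestimate1}. For the first quasi-optimal estimate I would write
\[
\|\sym\bs\curl(\boldsymbol p-\boldsymbol p_h)\|_0 \le \|\sym\bs\curl(\boldsymbol p-\bs P_h^{cs}\boldsymbol p)\|_0+\|\sym\bs\curl(\bs P_h^{cs}\boldsymbol p-\boldsymbol p_h)\|_0.
\]
The first term, by the defining orthogonality of $\bs P_h^{cs}$, equals $\inf_{\boldsymbol q_h\in \boldsymbol P_h}\|\sym\bs\curl(\boldsymbol p-\boldsymbol q_h)\|_0$. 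The second term is $\lesssim \|w-w_h\|_0\lesssim |w-w_h|_1\lesssim \inf_{v_h\in V_h}|w-v_h|_1$ by Lemma~\ref{lem:errorestimate1} combined with the Poincar\'e inequality on $H_0^1(\Omega)$. For $\|\boldsymbol \sigma-\boldsymbol \sigma_h\|_0$, since $\boldsymbol \sigma-\boldsymbol \sigma_h=\sym\bs\curl(\boldsymbol p-\boldsymbol p_h)+\boldsymbol\pi(w-w_h)$ and $\|\boldsymbol\pi(w-w_h)\|_0\lesssim \|w-w_h\|_0$, the same two terms control it (equivalently one can split via $\boldsymbol\sigma_h^{\ast}$).

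For the estimate on $|u-u_h|_1$, I would insert $P_h^{\grad}u$:
\[
|u-u_h|_1\le |u-P_h^{\grad}u|_1+|P_h^{\grad}u-u_h|_1.
\]
The first piece is exactly $\inf_{v_h\in V_h}|u-v_h|_1$ by the $H^1$-optimality of $P_h^{\grad}$. The second piece, by Lemma~\ref{lem:errorestimate1}, is $\lesssim \|\boldsymbol p-\boldsymbol p_h\|_0+\|w-w_h\|_0$. Using Poincar\'e on $\boldsymbol P_h/\boldsymbol{RM}^{\rot}$ (via Korn through the second Korn inequality for symmetric curl, equivalent to the symmetric gradient after a $90^{\circ}$ rotation), one gets $\|\boldsymbol p-\boldsymbol p_h\|_0\lesssim \|\sym\bs\curl(\boldsymbol p-\boldsymbol p_h)\|_0$, which together with the first estimate of the theorem yields the desired bound on $|u-u_h|_1$.

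The superconvergence estimates \eqref{eq:superconvergentestimate1}--\eqref{eq:superconvergentestimate2} are immediate once the regularity relation \eqref{eq:L2H1errorrelation} is plugged into Lemma~\ref{lem:errorestimate1}. Indeed
\[
\|\sym\bs\curl(\bs P_h^{cs}\boldsymbol p-\boldsymbol p_h)\|_0+\|\boldsymbol\sigma_h^{\ast}-\boldsymbol\sigma_h\|_0\lesssim \|w-w_h\|_0\lesssim h^{\delta}|w-w_h|_1\lesssim h^{\delta}\inf_{v_h\in V_h}|w-v_h|_1,
\]
where the last step again uses Lemma~\ref{lem:errorestimate1}. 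Similarly,
\[
|P_h^{\grad}u-u_h|_1\lesssim \|\boldsymbol p-\boldsymbol p_h\|_0+\|w-w_h\|_0\lesssim h^{\delta}\bigl(\|\sym\bs\curl(\boldsymbol p-\boldsymbol p_h)\|_0+|w-w_h|_1\bigr),
\]
and the previously proven quasi-optimal bounds on the right-hand side complete the proof.

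The only non-routine point is the quotient Poincar\'e/Korn inequality needed to pass from $\|\sym\bs\curl(\boldsymbol p-\boldsymbol p_h)\|_0$ to $\|\boldsymbol p-\boldsymbol p_h\|_0$ on $\boldsymbol P_h/\boldsymbol{RM}^{\rot}$; everything else is a clean triangle-inequality combination of Lemma~\ref{lem:errorestimate1}, the optimality of the projections $\bs P_h^{cs}$ and $P_h^{\grad}$, and the regularity hypothesis \eqref{eq:L2H1errorrelation}. I expect this Korn-type ingredient to be the only subtle step, and it is standard once one observes that $\sym\bs\curl$ is a $90^{\circ}$ rotation of the symmetric gradient $\bs\varepsilon$.
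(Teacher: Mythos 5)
Your proposal is correct and follows essentially the same route as the paper: the paper's own proof is just the two-sentence remark that the quasi-optimal bounds follow from Lemma~\ref{lem:errorestimate1} plus Poincar\'e-type inequalities (including the Korn-type bound $\|\boldsymbol p-\boldsymbol p_h\|_0\lesssim\|\sym\bs\curl(\boldsymbol p-\boldsymbol p_h)\|_0$ on the quotient by $\boldsymbol{RM}^{\rot}$ that you correctly flag as the one non-routine ingredient), while the superconvergence estimates follow from the same lemma combined with \eqref{eq:L2H1errorrelation}. Your write-up simply fills in the triangle-inequality details that the paper leaves implicit.
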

\begin{proof}
The first two error estimates can be derived from Lemma~\ref{lem:errorestimate1} and Poincar\'e inequality.
We can acquire \eqref{eq:superconvergentestimate1}-\eqref{eq:superconvergentestimate2} from Lemma~\ref{lem:errorestimate1} and \eqref{eq:L2H1errorrelation}.
\end{proof}

\begin{remark}\rm
The error estimates \eqref{eq:superconvergentestimate1}-\eqref{eq:superconvergentestimate2} are superconvergent if we use equal order finite element spaces for $V_h$ and $\boldsymbol P_h$. $\hfill\Box$
\end{remark}

\subsection{Decoupled discretization for biharmonic equation}
Now we discretize formulation~\eqref{eq:biharmonicdiscretedecouple1}-\eqref{eq:biharmonicdiscretedecouple4} using more general finite element spaces.

Let $f\in L^2(\Omega)$, $V_h\subset H_0^1(\Omega)$, $\boldsymbol X_h\subset\boldsymbol{H}_0^{1}(\Omega; \mathbb{R}^2)$ and $P_h\subset L_0^2(\Omega)$.
The discrete method based on formulation~\eqref{eq:biharmonicdiscretedecouple1}-\eqref{eq:biharmonicdiscretedecouple4}
is to find
$w_h, u_h\in V_h$, $\boldsymbol \phi_h\in\boldsymbol X_h$ and $p_h\in P_h$ such that
\begin{align}
(\curl w_h, \curl v_h) & =(f, v_h)  \quad\quad\quad\quad\; \forall~v_h\in V_h. \label{eq:biharmonicmfemc1}\\
(\nabla\boldsymbol \phi_h, \nabla\boldsymbol \psi_h) + (\div \boldsymbol \psi_h, p_h) & =(\curl w_h, \boldsymbol \psi_h) \quad\;\; \forall~\boldsymbol \psi_h\in\boldsymbol X_h, \label{eq:biharmonicmfemc2}\\
(\div \boldsymbol \phi_h, q_h) & =0 \quad\quad\quad\quad\quad\quad\;\; \forall~q_h\in P_h, \label{eq:biharmonicmfemc3}\\
(\curl u_h, \curl \chi_h) & =(\boldsymbol \phi_h, \curl \chi_h) \quad\;\;\; \forall~\chi_h\in V_h. \label{eq:biharmonicmfemc4}
\end{align}

We assume $(\boldsymbol X_h, P_h)$ is a stable finite element pair for Stokes equation (cf.~\cite{BoffiBrezziFortin2013, BoffiBrezziDemkowiczDuranEtAl2008}), i.e. it holds the inf-sup condtion
\begin{equation}\label{eq:stokesinfsupcondition}
\|q_h\|_0\lesssim \sup_{\boldsymbol \psi_h\in\boldsymbol \Sigma_h}\frac{(\div \boldsymbol \psi_h, q_h)}{|\boldsymbol \psi_h|_1} \quad \forall~q_h\in P_h.
\end{equation}
To analyze the discrete method~\eqref{eq:biharmonicmfemc1}-\eqref{eq:biharmonicmfemc4}, we rewrite it as a mixed finite element method
\begin{align*}
a(\boldsymbol \phi_h, u_h; \boldsymbol \psi_h, v_h) + b(\boldsymbol \psi_h, v_h; p_h, w_h)&=(f, v_h) \quad \forall~(\boldsymbol \psi_h, v_h)\in\boldsymbol X_h\times V_h, \\
b(\boldsymbol \phi_h, u_h; q_h, \chi_h)&=0 \quad\quad\quad\; \forall~(q_h, \chi_h)\in P_h\times V_h,
\end{align*}
where
\begin{align*}
a(\boldsymbol \phi_h, u_h; \boldsymbol \psi_h, v_h)&:=(\nabla\boldsymbol \phi_h, \nabla\boldsymbol \psi_h), \\
b(\boldsymbol \phi_h, u_h; q_h, \chi_h)&:=(\div \boldsymbol \phi_h, q_h)-(\boldsymbol \phi_h, \curl \chi_h)+ (\curl u_h, \curl \chi_h).
\end{align*}

\begin{lemma}
Assume the inf-sup condition~\eqref{eq:stokesinfsupcondition}, the following inf-sup condition holds
\begin{equation}\label{eq:biharmonicdecoupleinfsupcondition}
\|q_h\|_0+|\chi_h|_1\lesssim \sup_{(\boldsymbol \psi_h, v_h)\in\boldsymbol \Sigma_h\times V_h}\frac{b(\boldsymbol \psi_h, v_h; q_h, \chi_h)}{|\boldsymbol \psi_h|_1+|v_h|_1} \quad \forall~(q_h, \chi_h)\in P_h\times V_h.
\end{equation}
\end{lemma}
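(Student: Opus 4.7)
The plan is to use the standard two-scale test function strategy for block inf-sup conditions. Given $(q_h,\chi_h)\in P_h\times V_h$, I want to construct a test pair $(\boldsymbol\psi_h,v_h)\in\boldsymbol X_h\times V_h$ so that $b(\boldsymbol\psi_h,v_h;q_h,\chi_h)\gtrsim \|q_h\|_0^2+|\chi_h|_1^2$ while $|\boldsymbol\psi_h|_1+|v_h|_1\lesssim \|q_h\|_0+|\chi_h|_1$; then dividing gives the claim.

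First I would take $v_h=\chi_h$. This produces the desired control in the second variable because
\[
b(\boldsymbol\psi_h,\chi_h;q_h,\chi_h) = (\div\boldsymbol\psi_h, q_h) - (\boldsymbol\psi_h,\curl\chi_h) + |\chi_h|_1^2.
\]
Next, by the Stokes inf-sup assumption \eqref{eq:stokesinfsupcondition}, I would pick $\boldsymbol\psi_h^{(1)}\in\boldsymbol X_h$ realizing $(\div\boldsymbol\psi_h^{(1)},q_h)\gtrsim \|q_h\|_0^2$ with $|\boldsymbol\psi_h^{(1)}|_1\lesssim\|q_h\|_0$, and set $\boldsymbol\psi_h = t\,\boldsymbol\psi_h^{(1)}$ for a small parameter $t>0$ to be fixed. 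Using the Poincar\'e inequality to bound $\|\boldsymbol\psi_h\|_0\lesssim|\boldsymbol\psi_h|_1$ and Cauchy--Schwarz on the cross term,
\[
b(\boldsymbol\psi_h,\chi_h;q_h,\chi_h) \;\geq\; c\,t\|q_h\|_0^2 \;-\; C\,t\|q_h\|_0\,|\chi_h|_1 \;+\; |\chi_h|_1^2.
\]

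Then I would apply Young's inequality to the mixed term, $C t\|q_h\|_0|\chi_h|_1\leq \tfrac{ct}{2}\|q_h\|_0^2+\tfrac{C^2 t}{2c}|\chi_h|_1^2$, and choose $t=c/C^2$ (a fixed constant depending only on the Stokes inf-sup constant and the Poincar\'e constant). This yields
\[
b(\boldsymbol\psi_h,\chi_h;q_h,\chi_h) \;\geq\; \tfrac{ct}{2}\|q_h\|_0^2 + \tfrac{1}{2}|\chi_h|_1^2 \;\gtrsim\; \|q_h\|_0^2+|\chi_h|_1^2,
\]
while $|\boldsymbol\psi_h|_1+|v_h|_1\leq t\,|\boldsymbol\psi_h^{(1)}|_1+|\chi_h|_1\lesssim \|q_h\|_0+|\chi_h|_1$. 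Dividing proves \eqref{eq:biharmonicdecoupleinfsupcondition}.

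I do not expect any serious obstacle: the only delicate point is balancing the indefinite cross term $-(\boldsymbol\psi_h,\curl\chi_h)$ against the diagonal contributions, which is exactly what the scaling parameter $t$ and Young's inequality are designed to handle. Note in particular that we do not need a discrete counterpart of any Helmholtz-type relation between $V_h$ and $\boldsymbol X_h$; the proof relies solely on the Stokes inf-sup condition~\eqref{eq:stokesinfsupcondition} and Poincar\'e's inequality in $H^1_0(\Omega)$.
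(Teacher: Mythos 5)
Your proof is correct. It differs from the paper's only in packaging: the paper exploits the triangular structure of $b$ by restricting the supremum to test pairs with one component set to zero --- first taking $(\boldsymbol 0, v_h)$ to get $|\chi_h|_1 \leq \sup b/(|\boldsymbol\psi_h|_1+|v_h|_1)$ outright, then taking $(\boldsymbol\psi_h, 0)$ together with the Stokes inf-sup condition and Poincar\'e's inequality to bound $\|q_h\|_0$ by the supremum plus the already-controlled $|\chi_h|_1$ --- whereas you assemble a single composite test pair $(t\boldsymbol\psi_h^{(1)},\chi_h)$ and absorb the indefinite cross term $-(\boldsymbol\psi_h,\curl\chi_h)$ via Young's inequality with a fixed scaling parameter. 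Both arguments rest on exactly the same three ingredients (the Stokes inf-sup condition~\eqref{eq:stokesinfsupcondition}, Poincar\'e's inequality on $\boldsymbol H_0^1$, and the identity $\|\curl\chi_h\|_0=|\chi_h|_1$ for the two-dimensional scalar $\curl$), and your observation that no discrete Helmholtz-type compatibility between $V_h$ and $\boldsymbol X_h$ is needed matches the paper. The paper's split-the-supremum version is slightly shorter since it avoids the parameter balancing; your single-test-function version has the minor advantage of exhibiting an explicit near-optimal test pair, which can be convenient if one later wants quantitative constants.
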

\begin{proof}
It is easy to see that
\[
|\chi_h|_1=\sup_{v_h\in V_h}\frac{(\curl v_h, \curl \chi_h)}{|v_h|_1}\leq \sup_{(\boldsymbol \psi_h, v_h)\in\boldsymbol \Sigma_h\times V_h}\frac{b(\boldsymbol \psi_h, v_h; q_h, \chi_h)}{|\boldsymbol \psi_h|_1+|v_h|_1}.
\]
It follows from \eqref{eq:stokesinfsupcondition} and Poincar\'e inequality
\begin{align*}
\|q_h\|_0 & \lesssim \sup_{\boldsymbol \psi_h\in\boldsymbol \Sigma_h}\frac{(\div \boldsymbol \psi_h, q_h)}{|\boldsymbol \psi_h|_1}=\sup_{\boldsymbol \psi_h\in\boldsymbol \Sigma_h}\frac{b(\boldsymbol \psi_h, 0; q_h, \chi_h)+(\boldsymbol \psi_h, \curl \chi_h)}{|\boldsymbol \psi_h|_1} \\
& \lesssim  |\chi_h|_1 + \sup_{(\boldsymbol \psi_h, v_h)\in\boldsymbol \Sigma_h\times V_h}\frac{b(\boldsymbol \psi_h, v_h; q_h, \chi_h)}{|\boldsymbol \psi_h|_1+|v_h|_1}.
\end{align*}
Therefore the inf-sup condition \eqref{eq:biharmonicdecoupleinfsupcondition} will be derived by combining the last two inequalities.
\end{proof}

\begin{theorem}
Let $(w, \boldsymbol \phi, p, u)$ be the solution of the mixed formulation~\eqref{eq:biharmonicdiscretedecouple1}-\eqref{eq:biharmonicdiscretedecouple4}, and $(w_h, \boldsymbol \phi_h, p_h, u_h)\in V_h\times\boldsymbol X_h\times P_h\times V_h$ be the solution of the discrete method~\eqref{eq:biharmonicmfemc1}-\eqref{eq:biharmonicmfemc4}. Assume both $V_h$ and $\bs X_h$ are $H^1$ conforming, the inf-sup condition~\eqref{eq:stokesinfsupcondition} holds, and the discrete spaces are consistent with respect to the mixed formulation~\eqref{eq:biharmonicdiscretedecouple1}-\eqref{eq:biharmonicdiscretedecouple4}, then
\begin{align}\label{eq:biharmonicdecoupleestimate}
&\|w-w_h\|_1 + \|\boldsymbol \phi-\boldsymbol \phi_h\|_1 + \|p-p_h\|_0 + \|u-u_h\|_1 \\
\lesssim & \inf_{\chi_h\in V_h}\|w-\chi_h\|_1 + \inf_{\boldsymbol \psi_h\in\boldsymbol X_h}\|\boldsymbol \phi-\boldsymbol \psi_h\|_1 + \inf_{q_h\in P_h}\|p-q_h\|_0 + \inf_{v_h\in V_h}\|u-v_h\|_1.
\end{align}
Moreover, if
\begin{equation}\label{eq:l2ofphi}
\|\boldsymbol \phi-\boldsymbol \phi_h\|_0\lesssim h^{\delta} \left (\|\boldsymbol \phi-\boldsymbol \phi_h\|_1 +  \inf\limits_{q_h\in P_h}\|p-q_h\|_0\right ),
\end{equation}
 then
\begin{equation}\label{eq:superconvergentestimate3}
|P_h^{\grad}u-u_h|_1\lesssim h^{\delta} \left (\inf_{\chi_h\in V_h}\|w-\chi_h\|_1 + \inf_{\boldsymbol \psi_h\in\boldsymbol X_h}\|\boldsymbol \phi-\boldsymbol \psi_h\|_1 + \inf_{q_h\in P_h}\|p-q_h\|_0\right ).
\end{equation}
\end{theorem}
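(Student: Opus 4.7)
The plan is to exploit the lower-triangular structure of the discrete system \eqref{eq:biharmonicmfemc1}--\eqref{eq:biharmonicmfemc4}: the first Poisson equation determines $w_h$ alone, the Stokes block then determines $(\boldsymbol \phi_h, p_h)$ with right-hand side depending on $w_h$, and the last Poisson equation determines $u_h$ with right-hand side depending on $\boldsymbol \phi_h$. I would accordingly estimate the errors stage by stage and propagate the perturbations, exactly mirroring the analysis already given in Section~\ref{sec:decoupledHHJ}.

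For the first stage, subtracting \eqref{eq:biharmonicmfemc1} from \eqref{eq:biharmonicdiscretedecouple1} tested against $v_h \in V_h \subset H_0^1(\Omega)$ yields the Galerkin orthogonality $(\curl(w-w_h),\curl v_h)=0$, so C\'ea's lemma gives $|w-w_h|_1 \lesssim \inf_{v_h\in V_h}|w-v_h|_1$. For the Stokes stage, subtracting \eqref{eq:biharmonicmfemc2}--\eqref{eq:biharmonicmfemc3} from the continuous counterparts produces the perturbed error system
\begin{align*}
(\nabla(\boldsymbol \phi-\boldsymbol \phi_h),\nabla\boldsymbol \psi_h) + (\div\boldsymbol \psi_h,p-p_h) &= (\curl(w-w_h),\boldsymbol \psi_h),\\
(\div(\boldsymbol \phi-\boldsymbol \phi_h),q_h) &= 0,
\end{align*}
and classical Brezzi theory, using the Stokes inf-sup condition \eqref{eq:stokesinfsupcondition} together with coercivity of $(\nabla\cdot,\nabla\cdot)$ on divergence-free functions in $\boldsymbol H_0^1(\Omega;\mathbb{R}^2)$, delivers
\[
\|\boldsymbol \phi-\boldsymbol \phi_h\|_1 + \|p-p_h\|_0 \lesssim \inf_{\boldsymbol \psi_h\in\boldsymbol X_h}\|\boldsymbol \phi-\boldsymbol \psi_h\|_1 + \inf_{q_h\in P_h}\|p-q_h\|_0 + |w-w_h|_1,
\]
where the last term absorbs the dual norm of the data perturbation $\curl(w-w_h)$. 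For the last Poisson stage, using the two-dimensional identity $(\curl u,\curl \chi)=(\nabla u,\nabla \chi)$ for scalar arguments, one gets $(\nabla(u-u_h),\nabla\chi_h)=(\boldsymbol \phi-\boldsymbol \phi_h,\curl\chi_h)$ for all $\chi_h\in V_h$, and a standard Strang-type argument yields $|u-u_h|_1 \lesssim \inf_{v_h\in V_h}|u-v_h|_1 + \|\boldsymbol \phi-\boldsymbol \phi_h\|_0$. Since $\|\boldsymbol \phi-\boldsymbol \phi_h\|_0 \le \|\boldsymbol \phi-\boldsymbol \phi_h\|_1$, chaining the three stages proves \eqref{eq:biharmonicdecoupleestimate}.

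For the superconvergence \eqref{eq:superconvergentestimate3}, I would use the defining property of $P_h^{\grad}$ to rewrite the last stage error equation as $(\nabla(P_h^{\grad}u-u_h),\nabla\chi_h)=(\boldsymbol \phi-\boldsymbol \phi_h,\curl\chi_h)$. Testing with $\chi_h = P_h^{\grad}u-u_h$ and applying Cauchy--Schwarz yields $|P_h^{\grad}u-u_h|_1 \leq \|\boldsymbol \phi-\boldsymbol \phi_h\|_0$. The duality hypothesis \eqref{eq:l2ofphi} then supplies the factor $h^\delta$, and the Stokes-stage bound together with Stage~1 controls $\|\boldsymbol \phi-\boldsymbol \phi_h\|_1 + \inf_{q_h\in P_h}\|p-q_h\|_0$ in terms of the desired best-approximation quantities, producing \eqref{eq:superconvergentestimate3}.

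The main obstacle is the clean propagation of data perturbations between stages: because the Stokes right-hand side depends on $w_h$ rather than $w$, I must bound the perturbation so that the resulting estimate for $\|\boldsymbol \phi-\boldsymbol \phi_h\|_1+\|p-p_h\|_0$ only involves best-approximation terms, and so that the subsequent application of \eqref{eq:l2ofphi} preserves the optimal $h^\delta$ factor in the final superconvergent bound. Beyond this careful bookkeeping, the argument reduces to routine Brezzi theory plus Strang-type Galerkin perturbation analysis.
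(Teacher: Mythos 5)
Your proof is correct, but it takes a different route from the paper for the quasi-optimal estimate \eqref{eq:biharmonicdecoupleestimate}. The paper does not analyze the scheme stage by stage: it rewrites \eqref{eq:biharmonicmfemc1}--\eqref{eq:biharmonicmfemc4} as a single $2\times2$ saddle-point system with bilinear forms $a(\boldsymbol \phi_h, u_h; \boldsymbol \psi_h, v_h)=(\nabla\boldsymbol \phi_h,\nabla\boldsymbol \psi_h)$ and $b(\boldsymbol \phi_h, u_h; q_h, \chi_h)=(\div \boldsymbol \phi_h, q_h)-(\boldsymbol \phi_h, \curl \chi_h)+(\curl u_h, \curl \chi_h)$, establishes coercivity of $a$ on the discrete kernel of $b$ (via $|v_h|_1\le\|\boldsymbol\psi_h\|_0\lesssim|\boldsymbol\psi_h|_1$ on the kernel), invokes the combined inf-sup condition \eqref{eq:biharmonicdecoupleinfsupcondition} proved in the preceding lemma, and then cites the Babu\v{s}ka--Brezzi theory to obtain \eqref{eq:biharmonicdecoupleestimate} in one stroke; only the superconvergence \eqref{eq:superconvergentestimate3} is handled by the sequential argument of Section~\ref{sec:decoupledHHJ}, which is exactly what you do. Your approach instead exploits the block lower-triangular structure of the discrete system directly (C\'ea for $w_h$, perturbed Brezzi theory for the Stokes block with the data perturbation $\curl(w-w_h)$ measured in the dual of $\boldsymbol H_0^1$, and a Strang-type bound for $u_h$ with the consistency term $\|\boldsymbol\phi-\boldsymbol\phi_h\|_0$). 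This buys you a more elementary and transparent proof that bypasses the combined inf-sup lemma entirely and makes the error propagation between stages explicit, which is precisely what is needed anyway for \eqref{eq:superconvergentestimate3}; what the paper's monolithic formulation buys is a compact statement that plugs directly into standard mixed finite element theory and a single well-posedness framework covering both the continuous and discrete problems. The one point worth stating cleanly in your write-up is the bound on the Stokes-stage perturbation, $|(\curl(w-w_h),\boldsymbol\psi_h)|\le\|\curl(w-w_h)\|_0\,\|\boldsymbol\psi_h\|_0\lesssim|w-w_h|_1\,|\boldsymbol\psi_h|_1$, which is all that is needed to keep the right-hand side of the Stokes estimate in terms of best-approximation quantities; with that, both \eqref{eq:biharmonicdecoupleestimate} and \eqref{eq:superconvergentestimate3} follow as you describe.
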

\begin{proof}
For any $(\boldsymbol \psi_h, v_h)\in \Sigma_h\times V_h$ satisfying
\[
b(\boldsymbol \psi_h, v_h; q_h, \chi_h)=0\quad \forall~(q_h, \chi_h)\in P_h\times V_h,
\]
we have
\[
(\boldsymbol \psi_h, \curl \chi_h)= (\curl v_h, \curl \chi_h)\quad \forall~\chi_h\in V_h,
\]
which implies
\[
|v_h|_1\leq \|\boldsymbol \psi_h\|_0\lesssim |\boldsymbol \psi_h|_1.
\]
Thus
\[
|\boldsymbol \psi_h|_1^2+|v_h|_1^2\lesssim|\boldsymbol \psi_h|_1^2=a(\boldsymbol \psi_h, v_h; \boldsymbol \psi_h, v_h).
\]
Combining the inf-sup condition~\eqref{eq:biharmonicdecoupleinfsupcondition}, we will obtain the error estimate \eqref{eq:biharmonicdecoupleestimate} by mixed finite element method theory in~\cite{BoffiBrezziFortin2013}.
And \eqref{eq:superconvergentestimate3} can be derived using the similar argument adopted in Section \ref{sec:decoupledHHJ}.
\end{proof}

\bibliographystyle{siamplain}
\bibliography{references}

\begin{thebibliography}{10}

\bibitem{Alonso1996}
{\sc A.~Alonso}, {\em Error estimators for a mixed method}, Numer. Math., 74
  (1996), pp.~385--395, \url{https://doi.org/10.1007/s002110050222}.

\bibitem{AmroucheCiarletCiarlet2010}
{\sc C.~Amrouche, P.~G. Ciarlet, and P.~Ciarlet, Jr.}, {\em Weak vector and
  scalar potentials: applications to {P}oincar\'e's theorem and {K}orn's
  inequality in {S}obolev spaces with negative exponents}, Anal. Appl.
  (Singap.), 8 (2010), pp.~1--17,
  \url{https://doi.org/10.1142/S0219530510001497}.

\bibitem{ArnoldFalkWinther2000}
{\sc D.~N. Arnold, R.~S. Falk, and R.~Winther}, {\em Multigrid in {$H({\rm
  div})$} and {$H({\rm curl})$}}, Numer. Math., 85 (2000), pp.~197--217,
  \url{https://doi.org/10.1007/PL00005386}.

\bibitem{ArnoldFalkWinther2006}
{\sc D.~N. Arnold, R.~S. Falk, and R.~Winther}, {\em Finite element exterior
  calculus, homological techniques, and applications}, Acta Numer., 15 (2006),
  pp.~1--155, \url{https://doi.org/10.1017/S0962492906210018}.

\bibitem{ArnoldFalkWinther2010}
{\sc D.~N. Arnold, R.~S. Falk, and R.~Winther}, {\em Finite element exterior
  calculus: from {H}odge theory to numerical stability}, Bull. Amer. Math. Soc.
  (N.S.), 47 (2010), pp.~281--354,
  \url{https://doi.org/10.1090/S0273-0979-10-01278-4}.

\bibitem{BabuvskaAziz1972}
{\sc I.~Babu{\v{s}}ka and A.~K. Aziz}, {\em Survey lectures on the mathematical
  foundations of the finite element method}, in The mathematical foundations of
  the finite element method with applications to partial differential equations
  ({P}roc. {S}ympos., {U}niv. {M}aryland, {B}altimore, {M}d., 1972), Academic
  Press, New York, 1972, pp.~1--359.

\bibitem{BoffiBrezziDemkowiczDuranEtAl2008}
{\sc D.~Boffi, F.~Brezzi, L.~F. Demkowicz, R.~G. Dur{\'a}n, R.~S. Falk, and
  M.~Fortin}, {\em Mixed finite elements, compatibility conditions, and
  applications}, vol.~1939 of Lecture Notes in Mathematics, Springer-Verlag,
  Berlin, 2008, \url{https://doi.org/10.1007/978-3-540-78319-0}.

\bibitem{BoffiBrezziFortin2013}
{\sc D.~Boffi, F.~Brezzi, and M.~Fortin}, {\em Mixed finite element methods and
  applications}, Springer, Heidelberg, 2013,
  \url{https://doi.org/10.1007/978-3-642-36519-5}.

\bibitem{Braess2007}
{\sc D.~Braess}, {\em Finite Elements: Theory, Fast Solvers, and Applications
  in Elasticity Theory}, Cambridge University Press, Cambridge, third~ed.,
  2007, \url{https://doi.org/10.1017/CBO9780511618635}.

\bibitem{BrennerScott2008}
{\sc S.~C. Brenner and L.~R. Scott}, {\em The mathematical theory of finite
  element methods}, Springer, New York, third~ed., 2008,
  \url{https://doi.org/10.1007/978-0-387-75934-0}.

\bibitem{BrennerSunSung2017}
{\sc S.~C. Brenner, J.~Sun, and L.-y. Sung}, {\em Hodge decomposition methods
  for a quad-curl problem on planar domains}, J. Sci. Comput., 73 (2017),
  pp.~495--513, \url{https://doi.org/10.1007/s10915-017-0449-0}.

\bibitem{Brezis2011}
{\sc H.~Brezis}, {\em Functional analysis, {S}obolev spaces and partial
  differential equations}, Universitext, Springer, New York, 2011.

\bibitem{Brezzi1974}
{\sc F.~Brezzi}, {\em On the existence, uniqueness and approximation of
  saddle-point problems arising from {L}agrangian multipliers}, Rev. Fran\c
  caise Automat. Informat. Recherche Op\'erationnelle S\'er. Rouge, 8 (1974),
  pp.~129--151.

\bibitem{BrezziFortin1986}
{\sc F.~Brezzi and M.~Fortin}, {\em Numerical approximation of
  {M}indlin-{R}eissner plates}, Math. Comp., 47 (1986), pp.~151--158,
  \url{https://doi.org/10.2307/2008086}.

\bibitem{BrezziFortinStenberg1991}
{\sc F.~Brezzi, M.~Fortin, and R.~Stenberg}, {\em Error analysis of
  mixed-interpolated elements for {R}eissner-{M}indlin plates}, Math. Models
  Methods Appl. Sci., 1 (1991), pp.~125--151,
  \url{https://doi.org/10.1142/S0218202591000083}.

\bibitem{Carstensen2005}
{\sc C.~Carstensen}, {\em A unifying theory of a posteriori finite element
  error control}, Numer. Math., 100 (2005), pp.~617--637,
  \url{https://doi.org/10.1007/s00211-004-0577-y}.

\bibitem{CarstensenBartels2002}
{\sc C.~Carstensen and S.~Bartels}, {\em Each averaging technique yields
  reliable a posteriori error control in {FEM} on unstructured grids. {I}.
  {L}ow order conforming, nonconforming, and mixed {FEM}}, Math. Comp., 71
  (2002), pp.~945--969, \url{https://doi.org/10.1090/S0025-5718-02-01402-3}.

\bibitem{CarstensenBartelsJansche2002}
{\sc C.~Carstensen, S.~Bartels, and S.~Jansche}, {\em A posteriori error
  estimates for nonconforming finite element methods}, Numer. Math., 92 (2002),
  pp.~233--256, \url{https://doi.org/10.1007/s002110100378}.

\bibitem{CarstensenHu2007}
{\sc C.~Carstensen and J.~Hu}, {\em A unifying theory of a posteriori error
  control for nonconforming finite element methods}, Numer. Math., 107 (2007),
  pp.~473--502, \url{https://doi.org/10.1007/s00211-007-0068-z}.

\bibitem{Chen2016}
{\sc L.~Chen}, {\em Multigrid methods for constrained minimization problems and
  application to saddle point problems}, arXiv:1601.04091,  (2016), pp.~1--27,
  \url{https://arxiv.org/abs/1601.04091}.

\bibitem{ChenHuHuang2015a}
{\sc L.~Chen, J.~Hu, and X.~Huang}, {\em Multigrid methods for
  {H}ellan-{H}errmann-{J}ohnson mixed method of {K}irchhoff plate bending
  problems}, J. Sci. Comput., 76 (2018), pp.~673--696,
  \url{https://doi.org/10.1007/s10915-017-0636-z}.

\bibitem{ChenHuHuangMan2017}
{\sc L.~Chen, J.~Hu, X.~Huang, and H.~Man}, {\em Residual-based a posteriori
  error estimates for symmetric conforming mixed finite elements for linear
  elasticity problems}, Sci. China Math., 61 (2018), pp.~973--992,
  \url{https://doi.org/10.1007/s11425-017-9181-2}.

\bibitem{ChenWuZhongZhou2016}
{\sc L.~Chen, Y.~Wu, L.~Zhong, and J.~Zhou}, {\em Multigrid preconditioners for
  mixed finite element methods of vector {L}aplacian}, J. Sci. Comput.,
  (2018), pp.~1--28, \url{https://doi.org/10.1007/s10915-018-0697-7}.

\bibitem{Ciarlet1978}
{\sc P.~G. Ciarlet}, {\em The finite element method for elliptic problems},
  North-Holland Publishing Co., Amsterdam, 1978.

\bibitem{CostabelMcIntosh2010}
{\sc M.~Costabel and A.~McIntosh}, {\em On {B}ogovski\u\i\ and regularized
  {P}oincar\'e integral operators for de {R}ham complexes on {L}ipschitz
  domains}, Math. Z., 265 (2010), pp.~297--320,
  \url{https://doi.org/10.1007/s00209-009-0517-8}.

\bibitem{MbyadridModino2001}
{\sc R.~de~la Mbyadrid~Modino}, {\em Quantum Mechanics in Rigged {H}ilbert
  Space Language}, PhD thesis, Universidad de Valladolid, 2001.

\bibitem{Gallistl2017}
{\sc D.~Gallistl}, {\em Stable splitting of polyharmonic operators by
  generalized {S}tokes systems}, Math. Comp., 86 (2017), pp.~2555--2577,
  \url{https://doi.org/10.1090/mcom/3208}.

\bibitem{Gallistl2017a}
{\sc D.~Gallistl}, {\em Variational formulation and numerical analysis of
  linear elliptic equations in nondivergence form with {C}ordes coefficients},
  SIAM J. Numer. Anal., 55 (2017), pp.~737--757,
  \url{https://doi.org/10.1137/16M1080495}.

\bibitem{GiraultRaviart1986}
{\sc V.~Girault and P.-A. Raviart}, {\em Finite element methods for
  {N}avier-{S}tokes equations: Theory and algorithms}, Springer-Verlag, Berlin,
  1986, \url{https://doi.org/10.1007/978-3-642-61623-5}.

\bibitem{Hatcher2002}
{\sc A.~Hatcher}, {\em Algebraic topology}, Cambridge University Press,
  Cambridge, 2002.

\bibitem{Hellan1967}
{\sc K.~Hellan}, {\em Analysis of elastic plates in flexure by a simplified
  finite element method}, vol.~46 of Acta polytechnica Scandinavica. Civil
  engineering and building construction series, Norges tekniske
  vitenskapsakademi, Trondheim, 1967.

\bibitem{Herrmann1967}
{\sc L.~R. Herrmann}, {\em Finite element bending analysis for plates}, Journal
  of the Engineering Mechanics Division, 93 (1967), pp.~49--83.

\bibitem{Hiptmair1997}
{\sc R.~Hiptmair}, {\em Multigrid method for {$\bold H({\rm div})$} in three
  dimensions}, Electron. Trans. Numer. Anal., 6 (1997), pp.~133--152,
  \url{http://etna.mcs.kent.edu/vol.6.1997/pp133-152.dir/pp133-152.html}.

\bibitem{Hiptmair2002}
{\sc R.~Hiptmair}, {\em Finite elements in computational electromagnetism},
  Acta Numer., 11 (2002), pp.~237--339,
  \url{https://doi.org/10.1017/S0962492902000041}.

\bibitem{HiptmairXu2007}
{\sc R.~Hiptmair and J.~Xu}, {\em Nodal auxiliary space preconditioning in
  {${\bf H}({\bf curl})$} and {${\bf H}({\rm div})$} spaces}, SIAM J. Numer.
  Anal., 45 (2007), pp.~2483--2509, \url{https://doi.org/10.1137/060660588}.

\bibitem{HuangHuangXu2012}
{\sc J.~Huang, X.~Huang, and J.~Xu}, {\em An efficient {P}oisson-based solver
  for biharmonic equations discretized by the {M}orley element method}, tech.
  report, 2012.

\bibitem{HuangHuangXu2011}
{\sc J.~Huang, X.~Huang, and Y.~Xu}, {\em Convergence of an adaptive mixed
  finite element method for {K}irchhoff plate bending problems}, SIAM J. Numer.
  Anal., 49 (2011), pp.~574--607, \url{https://doi.org/10.1137/090773374}.

\bibitem{Huang2010}
{\sc X.~Huang}, {\em New Finite Element Methods and Efficient Algorithms for
  Fourth Order Elliptic Equations}, PhD thesis, Shanghai Jiao Tong University,
  2010.

\bibitem{Johnson1973}
{\sc C.~Johnson}, {\em On the convergence of a mixed finite-element method for
  plate bending problems}, Numer. Math., 21 (1973), pp.~43--62,
  \url{https://doi.org/10.1007/BF01436186}.

\bibitem{KrendlRafetsederZulehner2016}
{\sc W.~Krendl, K.~Rafetseder, and W.~Zulehner}, {\em A decomposition result
  for biharmonic problems and the {H}ellan-{H}errmann-{J}ohnson method},
  Electron. Trans. Numer. Anal., 45 (2016), pp.~257--282,
  \url{http://etna.math.kent.edu/volumes/2011-2020/vol45/abstract.php?vol=45&pages=257-282}.

\bibitem{MardalSchoberlWinther2012}
{\sc K.-A. Mardal, J.~Sch{\"o}berl, and R.~Winther}, {\em A uniform inf-sup
  condition with applications to preconditioning}, arXiv:1201.1513,  (2012),
  \url{https://arxiv.org/abs/1201.1513}.

\bibitem{MardalTaiWinther2002}
{\sc K.~A. Mardal, X.-C. Tai, and R.~Winther}, {\em A robust finite element
  method for {D}arcy-{S}tokes flow}, SIAM J. Numer. Anal., 40 (2002),
  pp.~1605--1631, \url{http://dx.doi.org/10.1137/S0036142901383910}.

\bibitem{MardalWinther2004}
{\sc K.-A. Mardal and R.~Winther}, {\em Uniform preconditioners for the time
  dependent {S}tokes problem}, Numer. Math., 98 (2004), pp.~305--327,
  \url{https://doi.org/10.1007/s00211-004-0529-6}.

\bibitem{Monk2003}
{\sc P.~Monk}, {\em Finite element methods for {M}axwell's equations}, Oxford
  University Press, New York, 2003,
  \url{https://doi.org/10.1093/acprof:oso/9780198508885.001.0001}.

\bibitem{OlshanskiiPetersReusken2006}
{\sc M.~A. Olshanskii, J.~Peters, and A.~Reusken}, {\em Uniform preconditioners
  for a parameter dependent saddle point problem with application to
  generalized {S}tokes interface equations}, Numer. Math., 105 (2006),
  pp.~159--191, \url{https://doi.org/10.1007/s00211-006-0031-4}.

\bibitem{PaulyZulehner2016}
{\sc D.~Pauly and W.~Zulehner}, {\em On closed and exact {G}rad-grad- and
  div-{D}iv-complexes, corresponding compact embeddings for symmetric
  rotations, and a related decomposition result for biharmonic problems in
  3{D}}, arXiv:1609.05873,  (2016), pp.~1--39,
  \url{https://arxiv.org/abs/1609.05873}.

\bibitem{RafetsederZulehner2017}
{\sc K.~Rafetseder and W.~Zulehner}, {\em A decomposition result for
  {K}irchhoff plate bending problems and a new discretization approach}, SIAM
  J. Numer. Anal., 56 (2018), pp.~1961--1986,
  \url{https://doi.org/10.1137/17M1118427}.

\bibitem{Schedensack2016}
{\sc M.~Schedensack}, {\em A new discretization for {$m$}th-{L}aplace equations
  with arbitrary polynomial degrees}, SIAM J. Numer. Anal., 54 (2016),
  pp.~2138--2162, \url{https://doi.org/10.1137/15M1013651}.

\bibitem{Helmholtz1858}
{\sc H.~von Helmholtz}, {\em On integrals of the hydrodynamic equations that
  correspond to vortex motions}, Journal für die reine und angewandte
  Mathematik, 55 (1858), pp.~25--55.

\bibitem{XieXuXue2008}
{\sc X.~Xie, J.~Xu, and G.~Xue}, {\em Uniformly-stable finite element methods
  for {D}arcy-{S}tokes-{B}rinkman models}, J. Comput. Math., 26 (2008),
  pp.~437--455.

\bibitem{Yosida1980}
{\sc K.~Yosida}, {\em Functional analysis}, Springer-Verlag, Berlin-New York,
  sixth~ed., 1980.

\bibitem{Zhang2016a}
{\sc S.~Zhang}, {\em Mixed schemes for quad-curl equations}, ESAIM Math. Model.
  Numer. Anal., 52 (2018), pp.~147--161,
  \url{https://doi.org/10.1051/m2an/2018005}.

\bibitem{Zhang2018}
{\sc S.~Zhang}, {\em Regular decomposition and a framework of order reduced
  methods for fourth order problems}, Numer. Math., 138 (2018), pp.~241--271,
  \url{https://doi.org/10.1007/s00211-017-0902-x}.

\bibitem{ZhangXiJi2018}
{\sc S.~Zhang, Y.~Xi, and X.~Ji}, {\em A multi-level mixed element method for
  the eigenvalue problem of biharmonic equation}, J. Sci. Comput., 75 (2018),
  pp.~1415--1444, \url{https://doi.org/10.1007/s10915-017-0592-7}.

\end{thebibliography}
\end{document}